\documentclass[11pt,leqno,dvipdfmx]{amsart}
\usepackage{amsmath,epsfig,graphicx,color}
\usepackage{mathrsfs,amssymb, amsfonts, latexsym, mathtools}
\usepackage{enumerate}
\usepackage{amsaddr}
\textwidth 6.50in
\topmargin -0.50in
\oddsidemargin 0in
\evensidemargin 0in
\textheight 9.00in
\definecolor{darkblue}{rgb}{.2, 0.2,.8}
\definecolor{carageen}{rgb}{0,0.5,0.3}
\definecolor{darkred}{rgb}{.8, .1,.1}

\newtheorem{lemma}{Lemma}[section]

\newtheorem{theorem}[lemma]{Theorem}

\newtheorem{proposition}[lemma]{Proposition}
\newtheorem{definition}[lemma]{Definition}
\newtheorem{corollary}[lemma]{Corollary}
\newtheorem{example}[lemma]{Example}
\newtheorem{exercise}[lemma]{Exercise}
\newtheorem{remark}[lemma]{Remark}
\newtheorem{fig}[lemma]{Figure}
\newtheorem{tab}[lemma]{Table}

\newcommand{\bth}{\begin{theorem}}
\newcommand{\ethe}{\end{theorem}}

\newcommand{\bre}{\begin{remark}\em }
\newcommand{\ere}{\end{remark}}

\newcommand{\ble}{\begin{lemma}}
\newcommand{\ele}{\end{lemma}}

\newcommand{\bde}{\begin{definition}}
\newcommand{\ede}{\end{definition}}
\newcommand{\bco}{\begin{corollary}}
\newcommand{\eco}{\end{corollary}}

\newcommand{\bpr}{\begin{proposition}}
\newcommand{\epr}{\end{proposition}}

\newcommand{\bexer}{\begin{exercise}}
\newcommand{\eexer}{\end{exercise}}

\newcommand{\bexam}{\begin{example}}
\newcommand{\eexam}{\end{example}}

\newcommand{\bfi}{\begin{fig}}
\newcommand{\efi}{\end{fig}}

\newcommand{\btab}{\begin{tab}}
\newcommand{\etab}{\end{tab}}

\newcommand{\beao}{\begin{eqnarray*}}
\newcommand{\eeao}{\end{eqnarray*}\noindent}

\newcommand{\beam}{\begin{eqnarray}}
\newcommand{\eeam}{\end{eqnarray}\noindent}

\newcommand{\beqq}{\begin{equation}}
\newcommand{\eeqq}{\end{equation}\noindent}

\newcommand{\bce}{\begin{center}}
\newcommand{\ece}{\end{center}}

\newcommand{\barr}{\begin{array}}
\newcommand{\earr}{\end{array}}

\newcommand{\vague}{\stackrel{\lower0.2ex\hbox{$\scriptscriptstyle
                    \it{v} $}}{\rightarrow}}
\newcommand{\weak}{\stackrel{\lower0.2ex\hbox{$\scriptscriptstyle
                    \it{w} $}}{\rightarrow}}
\newcommand{\what}{\stackrel{\lower0.2ex\hbox{$\scriptscriptstyle
                    \it{\hat{w}} $}}{\rightarrow}}

\newcommand{\bdis}{\begin{displaymath}}
\newcommand{\edis}{\end{displaymath}\noindent}

\newcommand{\N}{\mathbb{N}}
\newcommand{\R}{\mathbb{R}}

\newcommand{\ov}{\overline}
\newcommand{\wt}{\widetilde}
\newcommand{\wh}{\widehat}
\newcommand{\vep}{\varepsilon}

\newcommand{\cals}{{\mathcal S}}
\newcommand{\call}{{\mathcal L}}

\newcommand{\idr}{${\rm ID}(\R)$}
\newcommand{\id}{{\rm ID}(\R)}

\newcommand{\E }{{\mathbb E}}

\allowdisplaybreaks

\begin{document}
\today
\bibliographystyle{plain}
\title[Subexponentialiy of densities of infinitely divisible distributions]{Subexponentiality of densities of infinitely divisible distributions}

\author[M. Matsui]{Muneya Matsui}
\address{Department of Business Administration, Nanzan University, 18
Yamazato-cho, Showa-ku, Nagoya 466-8673, Japan.}
\email{mmuneya@gmail.com}

\begin{abstract}{
We show the equivalence of three properties for an infinitely divisible distribution: 
the subexponentiality of the density, the subexponentiality of the density of its L\'evy 
measure and the tail equivalence between the density and its L\'evy measure density,  
under monotonic-type assumptions on the L\'evy 
measure density. 
The key assumption is that tail of the L\'evy measure density is 
asymptotic to a non-increasing function
or is almost decreasing. 
Our conditions are natural and cover a rather wide class of infinitely divisible distributions. 
Several significant properties for analyzing the subexponentiality of densities have been derived such as 
closure properties of [\,convolution, convolution roots and asymptotic equivalence\,] and the factorization property. 
Moreover, we illustrate that the results are applicable for developing the 
statistical inference of subexponential infinitely divisible distributions which are absolutely continuous.
} \\
Keywords: subexponential density, infinite divisibility, L\'evy measure, long-tailedness, tail equivalence, asymptotic to a non-increasing function, 
almost decreasing
\end{abstract}
\subjclass[2010]{60E07, 60G70, 62F12}
\maketitle

\section{Introduction}
Studies on the subexponentiality of infinitely divisible distributions have been initiated by Embrecht et al. 
\cite{Embrechts:Goldie:Veraverbeke:1979}, where the subexponentiality of one-sided distributions was completely characterized.  
Pakes \cite{Pakes:2004} extended the result into distributions on the real line. 
More general $\gamma$-subexponentiality $\gamma\ge 0$ (see e.g \cite[p.369]{Watanabe:2008}) 
has intensively investigated by Embrechts and Goldie \cite{Embrechts:Goldie:1982}, Pakes \cite{Pakes:2004} and Watanabe \cite{Watanabe:2008} 
(see a comprehensive literature in the introduction in \cite{Watanabe:2008}). 

However, on the subexponentiality of densities of infinitely divisible distributions 
there are only a few works.
Watanabe and Yamamuro \cite{Watanabe:Yamamuro:2010} investigated 
the class of self-decomposable distributions, and 
Watanabe \cite{Watanabe:2020} studied the subexponential densities on the half-line. Shimura and Watanabe \cite{shimura:watanabe:2022} 
treated the compound Poisson case on the positive half. 
As stated in Watanabe \cite{Watanabe:2020} ``
the subexponentiality of a density is a stronger and more difficult property than the subexponentiality of a distribution.''
 Besides, we treat two-sided distributions which 
are harder to handle than one-sided ones. 
The results have been applied to characterize the tail asymptotics for the density 
of the supremum of a random walk, which is closely related with classical ruin theory and queuing theory 
(\cite[Section 5]{Foss:Korshunov:Zachary:2013}, \cite[Section 4]{shimura:watanabe:2022}).

One of our motivations here is an application in statistics.  
Infinitely divisible distributions provide finite dimensional distributions of L\'evy processes, 
the processes which have found numerous applications in meteorology, seismology, telecommunications, finance and insurance,
and still attract a lot of attention (see \cite{bmr:2001}). Thus, the related statistical methods have been 
intensively studied. In applications such as statistical modelings or statistical estimations of L\'evy processes, 
densities are often more convenient than distributions to handle.
Therefore, further studies on tail properties of densities such as subexponentiality 
are desirable.   
Indeed, the tail condition is crucial for asymptotics of various estimation methods (see the argument in Section \ref{sec:application:statistics}).

In this paper we characterize the subexponentiality of densities of 
infinitely divisible distributions on the whole real line. 
We establish the tail equivalence between the absolutely continuous part of an infinitely divisible distribution
and the density of the corresponding L\'evy measure. Furthermore, we show that  
this equivalence implies that the equivalence in subexponentiality, and vice versa. 
Our key is to assume a kind of monotonic property  
on the L\'evy measures, that is 
``asymptotic to a non-increasing function'' (abbreviated by a.n.i.) property 
or ``almost decreasing'' (abbreviated by al.d.) property.  
The former assumption is a bit stronger, but we could derive stronger results. 
Notice that the regularly varying functions with negative indices satisfy the a.n.i. property (\cite[p.23]{Bingham:Goldie:Teugels:1989}). 
Moreover, these two properties are covered by a rather wide class of L\'evy measures, and indeed 
both are shared by all self-decomposable distributions.

Our strategy is to loosen monotonic-type assumptions as much as possible, while keeping 
the equivalence of the three properties. Our solution in the compound Poisson case is the al.d. property, 
which is different from the one-sided case, where we do not need any monotonic-type assumptions. 
We derive several significant tools under this property for analyzing tail behaviors, 
which are used to prove the main result of the three equivalent relation. 
Notice that if the al.d. condition is violated , one can make an example such that 
the equivalence of subexponentiality does not hold in the two-sided case (see Section \ref{sec:cp}). 

For the infinitely divisible case, we assume the 
a.n.i. condition, a stronger condition than al.d., since the al.d. condition is not enough for our purpose. 
Indeed, the proof of the three equivalent relation, which is the main result, 
 is rather different form that of compound Poisson case. Our idea is to connect 
the subexponentiality of density and the local subexponentiality, which 
is an intermediate notion between the density and distribution. 
Different from the distribution, if the density is treated, there are harder gaps between 
the compound Poisson and infinitely divisible cases. 


We apply our results to the consistency proof of the maximum likelihood estimation (MLE for abbreviation) for an 
absolutely continuous infinitely divisible distribution. 
Usually an explicit expression for the density is unavailable for this class, while 
properties of the density such as boundedness and tail behavior (and sometimes continuity) 
are crucial in both the definition and asymptotics of MLE.  
Our proof depends only on the L\'evy density and we do not touch 
the genuine density or distribution. 
Therefore, by our results we could extend the scope of MLE to a rather wide subclass of infinitely divisible distributions 
beyond particular ones with explicit densities. We believe that our results would be useful tools for other estimation methods than MLE. 

In Section \ref{sec:preliminaries}, notation and definitions are formulated. 
We state main results for infinitely divisible distributions 
in Section \ref{sec:main:results} together with 
the closure/factorization properties of the subexponential density 
under assumption of the a.n.i. or al.d. condition. The compound Poisson case treated 
in Section \ref{sec:cp}, where the al.d. condition is exploited. 
In Section \ref{sec:application:statistics}, a statistical application is provided. 
The proofs of the main and necessary auxiliary results are given in Section 
\ref{pf:main:ID}. The proofs related with the convolution root, which are not directly 
related with the main results, are summarized in Appendix \ref{append:proofs}.  

 \section{Preliminaries}
\label{sec:preliminaries}
Let $F,G,H$ be probability distribution functions on $\R$ and denote by $F\ast G$ the convolution of $F$ and $G$:
\[
 F\ast G(x)=\int_{-\infty}^\infty F(x-y)G(d y)
\] 
and denote by $F^{\ast n}$ the $n$th convolutions with itself. The tail probability of $F$ is denoted by $\ov F(x)=1-F(x)$. 
Let $f,g,h$ be the corresponding probability density functions on $\R$ and we use the same notations for the convolution as those for distributions, e.g. 
\[
 f\ast g(x)=\int_{-\infty}^\infty f(x-y)g(y) d y \quad \text{or}\quad f^{\ast n}(x) \quad \text{for the $n$th convolution}. 
\]
We say that $F$ on $\R$ is long-tailed, denoted by $F\in \call$, if $\ov F(x)>0$ for all $x$ and 
\[
 \lim_{x\to\infty} \ov F(x+y)/ \ov F(x)=1\quad \text{for any fixed}\quad y>0. 
\]
In addition we call that $F$ is subexponential on $\R$, denoted by $F\in  \cals$, if $F\in \call$ and  
\begin{align}
\label{condi:subexp:posi}
 \lim_{x\to \infty} \ov{F^{\ast 2}}(x)/ \ov F(x)=2. 
\end{align}
The class $\cals$ was introduced by \cite{Chistyakov:1964} and it is known that $\cals$ includes regularly varying functions. 
It should be noted that if $F$ is a distribution on $\R_+=:[0,\infty)$, then the condition \eqref{condi:subexp:posi} solely implies 
$F\in\cals$ and we do not need the assumption $F\in \call$, since $F\in \cals$ automatically satisfies $F \in\call$ 
(cf. \cite[Lemma 3.2]{Foss:Korshunov:Zachary:2013}). 
Throughout the paper, for functions $\alpha,\beta:\R \to \R_+$, $\alpha(x) \sim \beta(x)$ 
means that $\lim_{x\to\infty}\alpha(x)/\beta(x)\to 1$. 

In this paper we study the corresponding characteristics for densities. 
\begin{definition}
$(\mathrm{i})$ The density $f$ of $F$ is $($right-side$)$ long-tailed, denoted by $f \in \call$, if there exists $x_0>0$ such that 
$f(x)>0,\,x\ge x_0$ and for any fixed $y>0$ $f(x+y)\sim f(x)$. \\
$(\mathrm{ii})$ The density $f$ of $F$ is $($right-side$)$ subexponential on $\R$, denoted by $\cals$, 
if $f\in \call$ and $f^{\ast 2}(x) \sim 2f(x)$. \\
$(\mathrm{iii})$ The density $f$ of $F$ is weakly (right-side) subexponential on $\R$, denoted by $\cals_+$, 
if $f\in \call$ and the function $f_+(x)={\bf 1}_{\R_+}(x) f(x) / \ov F(0),\,x \in \R$ is subexponential, i.e. $f_+\in \cals$. 
\end{definition}
Here $f_+$ is the density of the conditional distribution $F_+$ of $F$ on $\R_+$. 
For a distribution $F$, it is known that 
\[
F\in \cals\ \Leftrightarrow\ F_+ \in \cals\ \Leftrightarrow\ F^+ \in \cals, 
\] where 
$F^+$ is the distribution given by 
$F^{+}(x)=F(x)$ for $x\ge 0$ and $F^{+}(x)=0$ for $x<0$ (see Corollary 2.1 of \cite{Pakes:2004}, Lemma 3.4 of \cite{Foss:Korshunov:Zachary:2013}).  
However, for a probability density $f$ the situation is different, i.e. 
unless the support of $f$ is bounded below, we could not have $f\in \cals \Leftrightarrow f \in \cals_+$ without additional 
conditions (\cite[p.83]{Foss:Korshunov:Zachary:2013}). Therefore, we assume one of 
the following two assumptions, under which $f\in \cals \Leftrightarrow f \in \cals_+$
(\cite[Lemma 4.13]{Foss:Korshunov:Zachary:2013}), and which are key tools 
in this paper. 
\begin{definition}
$(\mathrm{i})$ We say that a density $f:\R\to \R_+$ is asymptotic to a non-increasing function $($a.n.i. for short$)$ if 
$f$ is locally bounded and positive on $[x_0,\infty)$ for some $x_0>0$, and 
\begin{align}
\label{eq:def:ani}
 \sup_{t\ge x} f(t)\sim f(x)\quad \text{and}\quad \inf_{x_0\le t \le x} f(t)\sim f(x). 
\end{align}
$(\mathrm{ii})$
We say that a density $f:\R\to\R_+$ is almost decreasing $($al.d. for short$)$ if there exists $x_0>0$ and $K>0$ such that
\begin{align*}
 f(x+y) & \le Kf(x),\quad \text{for all}\ x>x_0,\,y>0. 
\end{align*} 
\end{definition}
Notice that the al.d. property includes the a.n.i. property, and the latter is 
satisfied by the regularly varying functions with negative indices \cite[p.23]{Bingham:Goldie:Teugels:1989}. 

We will investigate properties of the above sort, particularly on infinitely divisible distributions $\mu$ on $\R$. 
The characteristic function (ch.f.) of $\mu$ is 
\begin{align}
\label{def:chf:idr}
 \wh \mu (z) = \exp \Big\{
\int_{-\infty}^\infty 
(e^{izy}-1-i zy{\bf 1}_{\{ |y|\le 1 \}} ) \nu (dy) +iaz -\frac{1}{2} b^2 z^2
\Big\},
\end{align}
where $a \in \R,\,b\ge 0$ and $\nu$ is the L\'evy measure satisfying $\nu(\{0\})=0$ and $\int_{-\infty}^\infty (1 \wedge x^2) \nu (dx) <\infty$.
Throughout this paper, we always assume that the L\'evy measure $\nu$ of $\mu$ has a density, and 
we denote by \idr\ the class of all infinitely divisible distributions on $\R$.  

\section{Main results}
\label{sec:main:results}
We separate the cases depending on weather $\nu(\R)<\infty$ or $\nu(\R)=\infty$. 
The former implies that $\mu$ is a compound Poisson plus Gaussian 
(e.g. \cite[Ch.$\mathrm{IV}$, Theorem 4.1.8]{steutel:VanHarn:2003}, \cite[Lemma 2.13]{Kuprianou:2006}). 
Since we always assume a density for the L\'evy measure, 
the latter implies that the purely non-Gaussian part of $\mu$ is absolutely continuous (e.g. \cite[Theorem 27.7]{sato:1999} with $l=1$). 
Note that we use notation $g$ also for the (non-proper) density of a L\'evy measure. 
\begin{theorem}
\label{thm:id:cp}
Let $\mu \in \id$ with $\nu(dx)=g(x) dx$ such that $\nu(\R)<\infty$.  
Denote the non-Gaussian part $\mu'$, which is a $\gamma$-shifted compound Poisson given by  
\[
 \mu'(dx)= e^{-\lambda} \delta_\gamma(dx) + (1-e^{-\lambda}) f(x-\gamma) dx, \quad \gamma\in \R, 
\]
where $\delta_\gamma$ is Dirac measure at $\gamma$, $f$ is a proper density and $\lambda>0$ is the Poisson parameter. 
Then the following are equivalent. 
\begin{align*}
(\mathrm{i})\quad & \quad f\in \cals_+\ \text{and}\ f\ \text{is al.d.} \\
(\mathrm{ii})\quad & \quad g\in \cals_+\ \text{and}\ g\ \text{is al.d.}  \\
(\mathrm{iii})\quad & \quad g\in\call,\ g\ \text{is al.d.} \ \text{and}\ \lim_{x\to \infty} f(x)/g(x) =\lambda/(1-e^{-\lambda}). 
\end{align*}
\end{theorem}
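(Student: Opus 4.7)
Proof proposal. Setting $\gamma=0$ without loss of generality and writing $h:=g/\lambda$ for the jump probability density associated with the compound Poisson $\mu'$, the definition of $\mu'$ unpacks to
\[
 (1-e^{-\lambda})f(x)=\sum_{n=1}^{\infty}\frac{e^{-\lambda}\lambda^n}{n!}h^{*n}(x),\qquad x\in\R,
\]
so the whole theorem concerns the asymptotic relation between $h$ (equivalently the L\'evy density $g=\lambda h$) and this Poisson mixture. Since al.d.\ and membership in $\cals_+$ are both unaffected by a positive scalar factor, I move freely between statements about $g$ and statements about $h$.

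The main analytic ingredient is the $\cals_+$-closure theory under convolutions developed in the preceding portion of the paper under the al.d.\ hypothesis, together with a Kesten-type uniform bound $h^{*n}(x)\le K_{\varepsilon}(1+\varepsilon)^n h(x)$ valid for $x\ge x_0$ and every $n\in\N$. With this in hand, (ii)$\Rightarrow$(iii) is short: $\cals_+\subset\call$ is built into the definition, and dominated convergence applied to the Poisson series above together with $h^{*n}(x)/h(x)\to n$ yields $f(x)/h(x)\to \lambda/(1-e^{-\lambda})$, which translates to the tail ratio between $f$ and $g$ asserted in (iii).

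For (iii)$\Rightarrow$(ii) I would peel the $n=1$ term off the series, rewriting
\[
 \sum_{n\ge 2}\frac{e^{-\lambda}\lambda^n}{n!}h^{*n}(x)=(1-e^{-\lambda})f(x)-e^{-\lambda}\lambda h(x),
\]
dividing by $h(x)$, and using the supposed tail ratio to identify the limit of the remainder as a prescribed multiple of $h$. An induction on $n$ that uses the al.d.\ closure lemmas and $h\in\call$ (inherited from $g\in\call$) then forces $h^{*2}(x)/h(x)\to 2$, so $h\in\cals_+$ and hence $g\in\cals_+$. For the pair (i)$\iff$(iii), the direction (iii)$\Rightarrow$(i) follows immediately from the asymptotic-equivalence closure of $\cals_+$ under al.d.\ established earlier, since (iii) supplies exactly such an equivalence between $f$ and $g$. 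For the reverse direction I would exploit the pointwise inequality
\[
 \lambda h(x)\;\le\; e^{\lambda}(1-e^{-\lambda})f(x),\qquad x\in\R,
\]
read off from the Poisson series, together with the factorization property proved in the same preceding section, to transfer the al.d.\ and $\cals_+$ properties from $f$ back to $h$, thereby reducing to the already settled case (ii).

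The hardest step is this last implication (i)$\Rightarrow$(iii): on the whole real line the convolution $h^{*2}(x)$ picks up contributions from regions where one argument is large positive while the other is large negative, and without the al.d.\ hypothesis such cross-tail contributions can destroy the subexponential tail asymptotics, as the counterexample flagged in the introduction shows. The al.d.\ closure and factorization theorems from the preceding section are exactly the tools needed to tame these contributions, and they constitute the point at which the present two-sided argument genuinely departs from the classical one-sided Embrechts--Goldie--Veraverbeke framework.
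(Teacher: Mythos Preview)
Your implications $(\mathrm{ii})\Rightarrow(\mathrm{iii})$ and $(\mathrm{iii})\Rightarrow(\mathrm{i})$ match the paper's argument (Kesten bound plus dominated convergence in the first case, the asymptotic-equivalence closure Lemma~\ref{lem:a3-2} in the second). Your $(\mathrm{iii})\Rightarrow(\mathrm{ii})$ is in the right spirit; the paper does it a bit more cleanly by isolating $g^{\ast 2}$ from the series, taking $\limsup$, and applying Fatou together with $\liminf g^{\ast n}/g\ge n$ to squeeze out $\limsup g^{\ast 2}/g\le 2$, but your outline would also work once made precise.

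The genuine gap is $(\mathrm{i})\Rightarrow(\mathrm{ii})$. The pointwise bound $\lambda h(x)\le e^{\lambda}(1-e^{-\lambda})f(x)$ is correct, but it only gives an \emph{upper} bound for $h$ in terms of $f$; it does not transfer the al.d.\ property to $h$ (for that you would need a matching lower bound), and the factorization results in the paper (Proposition~\ref{prop:fact:gene}, Corollary~\ref{prop:fact:cp}) concern splitting a convolution $\wt h=\wt g\ast\wt f$, not extracting the jump density $h$ from the full Poisson mixture $f$. Nothing in your sketch explains how to get from $f\in\cals_+$ back to $h\in\cals_+$.

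The paper's route here is substantially different and contains an idea you are missing. First it truncates: choose $c_1$ so that the right-tail mass $\Lambda_1=\overline G(c_1)$ satisfies $\lambda\Lambda_1<\log 2$, split $\mu=\mu_1\ast\mu_2$ with $\mu_2$ built from the remaining (light-tailed) part of the L\'evy measure, and use Corollary~\ref{prop:fact:cp} to pass from $\wt f\in\cals$ to $\wt f_1\in\cals$. The crucial step is then a Laplace-transform inversion: from $L_{f_1}(z)=(e^{\lambda\Lambda_1 L_{g_1}(z)}-1)/(e^{\lambda\Lambda_1}-1)$ one solves for $L_{g_1}$ and, because $e^{\lambda\Lambda_1}-1<1$, expands the logarithm to obtain
\[
\lambda\Lambda_1\, g_1(x)\stackrel{a.e.}{=}-\sum_{n\ge 1}n^{-1}(1-e^{\lambda\Lambda_1})^n f_1^{\ast n}(x).
\]
Now the Kesten bound applies to $f_1$ (not to $g$), dominated convergence gives $g_1(x)\sim c f_1(x)$, and $g_1\in\cals_+$ follows. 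The truncation is not cosmetic: it is what makes the logarithmic series converge and what lets you invoke the one-sided Kesten bound for $f_1$. Your proposal does not contain this inversion step or any substitute for it.
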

Theorem \ref{thm:id:cp} directly follows from the compound Poisson case (Theorem \ref{thm:compundpoi}).
\begin{theorem}
\label{theorem:ID}
Let $\mu \in \id$ 
with $\nu(dx)=g(x) dx$ such that $\nu(\R)=\infty$. 
Let $f_0(x)$ be a density of $\mu_0 \in \id$ with $a=b=0$ and $\nu(dx)={\bf 1}_{\{|x|\le 1\}}g(x)dx$.
Suppose that 
$g_1(x)={\bf 1}_{\{x>1\}} g(x)/\nu((1,\infty))$ is bounded, and there exists $\gamma>0$ such that 
\begin{align}
\label{exp:limit:trancated}
 \lim_{x\to \infty} e^{\gamma x} f_0(x)=0.
\end{align}
For a density $f$ of $\mu$ we consider the following properties. 
\begin{align*}
(\mathrm{i})\quad & f\in \cals_+\quad \text{and}\quad f\ \text{is al.d.} \\
(\mathrm{ii})\quad & g_1\in \cals_+ \\
(\mathrm{iii})\quad & g_1 \in \call\quad \&\quad \lim_{x\to \infty} f(x)/g_1(x) = \nu((1,\infty)).  
\end{align*}
$(a)$ If $g$ is a.n.i., then we can choose $f$ such that $(\mathrm{i})$, $(\mathrm{ii})$ and $(\mathrm{iii})$ are equivalent.  \\
$(b)$ If $g$ is al.d., then we can choose $f$ such that $(\mathrm{ii})\Leftrightarrow(\mathrm{iii})$ implies $(\mathrm{i})$.
\end{theorem}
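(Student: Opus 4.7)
The plan is to decompose $\mu$ into its positive large-jumps compound Poisson piece $\mu_+$ and an ``exponentially decaying'' complement, apply Theorem \ref{thm:id:cp} to $\mu_+$ to extract the equivalence (ii)$\Leftrightarrow$(iii), and then transfer information to $f$ via a convolution lemma. Concretely, split $\nu = \1_{\{|y|\le 1\}}\nu + \1_{\{y>1\}}\nu + \1_{\{y<-1\}}\nu$, giving $\mu = \mu_+ * \tilde\mu_0$, where $\mu_+$ is the pure compound Poisson with intensity $\lambda_+ = \nu((1,\infty))$ and jump density $g_1$,
\[
\mu_+(dx) = e^{-\lambda_+}\delta_0(dx) + (1-e^{-\lambda_+})\,f_+(x)\,dx,
\]
and $\tilde\mu_0$ aggregates the small jumps, the Gaussian, the drift, and the negative large-jumps compound Poisson. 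Let $h$ denote its density. By hypothesis \eqref{exp:limit:trancated} on $f_0$, the super-exponential decay of the Gaussian, and the support of the negative large-jumps piece in $(-\infty,0]$, $h$ has an exponential right tail, $\int_{0}^{\infty} e^{\eta x} h(x)\,dx < \infty$ for some $\eta>0$.

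The equivalence (ii)$\Leftrightarrow$(iii) follows from applying Theorem \ref{thm:id:cp} to $\mu_+$: since $g$ al.d.\ implies $g_1$ al.d., the theorem gives (ii)$\Leftrightarrow$(iii) together with $f_+\in\cals_+$, $f_+$ al.d., and $(1-e^{-\lambda_+})f_+(x) \sim \lambda_+ g_1(x)$. For the forward implication (ii)$\Leftrightarrow$(iii)$\Rightarrow$(i), invoking the standard convolution principle --- $p * h(x) \sim p(x)\,h(\R)$ whenever $p\in\cals_+$ and $h$ has an exponential right moment --- yields
\[
f(x) \sim (1-e^{-\lambda_+})\,f_+(x) \sim \lambda_+\, g_1(x) = \nu((1,\infty))\, g_1(x),
\]
and closure of $\cals_+$ under tail equivalence plus transfer of al.d.\ from $g_1$ to $f$ establishes (i). This argument uses only $g$ al.d., so it already proves part (b) and the forward half of (a).

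The reverse implication in (a), (i)$\Rightarrow$(ii), is the main obstacle, as it requires recovering the tail of $g_1$ from that of $f$ --- effectively deconvolving $f_+$ out of $f = e^{-\lambda_+}h + (1-e^{-\lambda_+})\,f_+*h$. My plan is to bridge through local subexponentiality: under the a.n.i.\ hypothesis, $f\in\cals_+$ is equivalent to local subexponentiality of $F$ on intervals $\Delta=(0,c]$ for small $c>0$, and local subexponentiality is a much better-behaved property under such deconvolutions. The ID local-subexponentiality theory (analogous to the Embrechts--Goldie--Veraverbeke characterization at the distributional level, as developed in Watanabe \cite{Watanabe:2020} and Shimura--Watanabe \cite{shimura:watanabe:2022}) transfers local subexponentiality from $F$ to $\1_{(1,\infty)}g$, and the a.n.i.\ hypothesis on $g$ upgrades this local property back to $g_1\in\cals_+$, completing the equivalence.

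The key technical difficulty is precisely this inversion. The al.d.\ property suffices for pushing tails forward through convolution with an exponentially bounded kernel but not for pulling them back, because al.d.\ permits oscillations in $g_1$ that are smoothed away by $h$ and hence invisible in $f$. The a.n.i.\ hypothesis is exactly the strengthening that rules out such oscillations --- making $g_1$ asymptotically comparable to a bona fide monotone nonincreasing function --- and hence makes $g_1$ recoverable from the local tail of $f$ via the local-subexponential bridge.
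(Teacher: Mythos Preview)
Your decomposition $\mu=\mu_+\ast\tilde\mu_0$ and the use of Theorem~\ref{thm:id:cp} on $\mu_+$ handle the direction $(\mathrm{ii})\Rightarrow(\mathrm{iii})$ and $(\mathrm{ii})\Rightarrow(\mathrm{i})$ correctly, and this matches the paper's argument. However, there are two genuine gaps.

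\textbf{The direction $(\mathrm{iii})\Rightarrow(\mathrm{ii})$.} You write that ``the equivalence $(\mathrm{ii})\Leftrightarrow(\mathrm{iii})$ follows from applying Theorem~\ref{thm:id:cp} to $\mu_+$'', but the condition~$(\mathrm{iii})$ in Theorem~\ref{thm:id:cp} is $\lim f_+(x)/g_1(x)=\lambda_+/(1-e^{-\lambda_+})$, whereas condition~$(\mathrm{iii})$ in Theorem~\ref{theorem:ID} is $\lim f(x)/g_1(x)=\lambda_+$. To pass from the latter to the former you would need $f\sim(1-e^{-\lambda_+})f_+$, and your convolution principle requires $f_+\in\cals_+$ for this---precisely what you are trying to prove. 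The paper avoids this circularity: it expands $f(x)=e^{-\Lambda_u}f_r(x)+e^{-\Lambda_u}\sum_{n\ge1}(\Lambda_u^n/n!)\,g_u^{\ast n}\ast f_r(x)$, isolates the $n=2$ term, divides by $g_u$, and combines the hypothesis $f(x)/g_u(x)\to\Lambda_u$ with Fatou and the bound $\liminf g_u^{\ast n}\ast f_r(x)/g_u(x)\ge n$ from Lemma~\ref{lem:a2-1}\,$(\mathrm{iii})$ to squeeze out $g_u^{\ast2}(x)/g_u(x)\to2$ directly.

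\textbf{The direction $(\mathrm{i})\Rightarrow(\mathrm{ii})$.} Your local-subexponentiality bridge is the right idea and is what the paper does (Lemma~\ref{lem:sloc:s}), but the transfer ``$F\in\cals_{loc}\Rightarrow \nu_1\in\cals_{loc}$'' that you invoke requires an exponential left-tail moment on the distribution; the references you cite are half-line results. The paper handles this by first peeling off the \emph{negative} large-jumps compound Poisson via Corollary~\ref{prop:fact:cp} (Lemma~\ref{lem:factorization:id}): writing $\mu=\mu_1\ast\mu_2$ with $\mu_1$ carrying only $\nu|_{(-\infty,-1]}$, one gets $f_2\in\cals$ with $f_2\sim f$, and $\mu_2$ now has $\int e^{-\gamma x}\mu_2(dx)<\infty$ for all $\gamma>0$, so Watanabe--Yamamuro \cite[Corollary~3.1]{Watanabe:Yamamuro:2010} applies to $\mu_2$ and yields $\nu_1\in\cals_{loc}$. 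Without this step your invocation of the local-subexponential ID theory is not justified when the left tail of $\nu$ is heavy.
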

The proof is given in Section \ref{pf:main:ID}. 
Obviously $\cals_+$ of the two theorems is replaced with $\cals$. 

Since the a.n.i. property includes regular variation, the following is immediate. 
\begin{corollary}
\label{corollary:ID}
Assume the same conditions as those of Theorem \ref{theorem:ID}
before three properties $(\mathrm{i})$, $(\mathrm{ii})$ and $(\mathrm{iii})$. 
Then, $[$$g_1(x)$ is a.n.i. and 
we can choose a density $f$ of $\mu$ such that it is regularly varying$]$ if and only if 
$g$ is regularly varying, and in this case $f(x)\sim g(x)$. 
\end{corollary}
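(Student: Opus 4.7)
My plan is to reduce both directions of the equivalence to Theorem \ref{theorem:ID}(a), translating ``regularly varying'' into the a.n.i.\ and subexponentiality hypotheses that theorem uses.

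For the $(\Leftarrow)$ direction, I would start from $g$ regularly varying at infinity (with necessarily negative index $-\alpha$). Then $g_1$ is regularly varying with the same index, since on $(1,\infty)$ it is a constant multiple of $g$. By \cite[p.23]{Bingham:Goldie:Teugels:1989}, regular variation with a negative index implies the a.n.i.\ property, so $g_1$, and hence $g$ on its tail, is a.n.i. Regularly varying densities are subexponential, and combining this with the a.n.i.\ property via \cite[Lemma 4.13]{Foss:Korshunov:Zachary:2013} gives $g_1 \in \cals_+$, which is property $(\mathrm{ii})$ of Theorem \ref{theorem:ID}. Theorem \ref{theorem:ID}(a) then furnishes property $(\mathrm{iii})$: $f(x)/g_1(x) \to \nu((1,\infty))$, whence $f(x) \sim g(x)$; being tail-equivalent to a regularly varying function, $f$ is itself regularly varying.

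For the $(\Rightarrow)$ direction, I would fix a regularly varying density $f$ of $\mu$ under the hypothesis that $g_1$ is a.n.i. Since a.n.i.\ for $g_1$ is equivalent to a.n.i.\ for $g$ on its tail, Theorem \ref{theorem:ID}(a) applies. Regular variation of $f$ with negative index yields $f \in \cals$ together with the a.n.i.\ property (hence al.d.) for $f$; \cite[Lemma 4.13]{Foss:Korshunov:Zachary:2013} then upgrades $\cals$ to $\cals_+$, so property $(\mathrm{i})$ holds for $f$. By the equivalence in Theorem \ref{theorem:ID}(a), $(\mathrm{iii})$ also holds, giving $g_1(x) \sim f(x)/\nu((1,\infty))$. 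This makes $g_1$ regularly varying with the same index as $f$, so $g$ is regularly varying and the asymptotic $f(x) \sim g(x)$ follows.

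The one point I expect to handle carefully is the bookkeeping about which version of the density is used: Theorem \ref{theorem:ID}(a) guarantees the equivalence of $(\mathrm{i})$--$(\mathrm{iii})$ for a specific version of $f$, whereas the corollary speaks of ``a density $f$ of $\mu$ that is regularly varying''. Since all the relevant properties are tail-asymptotic and since one may select the canonical convolution representative of the density used throughout the paper to serve in both roles, this compatibility is immediate; beyond it, the corollary follows directly from Theorem \ref{theorem:ID}(a) and the standard facts that regularly varying densities are a.n.i.\ and subexponential.
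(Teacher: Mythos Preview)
Your proposal is correct and matches the paper's approach: the paper simply declares the corollary ``immediate'' from Theorem \ref{theorem:ID}(a) together with the fact that regular variation with negative index implies the a.n.i.\ property, and you have spelled out precisely those details. The only superfluous step is invoking \cite[Lemma 4.13]{Foss:Korshunov:Zachary:2013} for $g_1$ in the $(\Leftarrow)$ direction, since $g_1$ is supported on $(1,\infty)\subset\R_+$ and hence $g_1\in\cals$ and $g_1\in\cals_+$ coincide trivially; this does not affect correctness.
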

\begin{remark}
$(\mathrm{i})$ 
Theorem \ref{theorem:ID} holds regardless of values of $b$, and thus
 Gaussian density is convoluted in $f$ when $b> 0$. Generally it is difficult to separate non-Gaussian part 
from $f$, since it also includes the small jump part from $g(x){\bf 1}_{\{|x|\le 1\}}$ as a convoluted element, which is not heavy tail. \\
$(\mathrm{ii})$ Theorem \ref{theorem:ID} covers many important subclasses of\ \idr, such as $s$ self-decomposable distributions 
whose L\'evy densities $g$ are non-increasing on $(0,\infty)$ and non-decreasing on $(-\infty,0)$ $($Jurek \cite{Jurek:1985}$)$. 
Indeed, it includes all self-decomposable cases of \cite{Watanabe:Yamamuro:2010}. \\
$(\mathrm{iii})$ Notice that the al.d or a.n.i. property of $g_1 \in \cals$ does not imply 
that $g_1$ is regularly varying. Recall from \cite[p.86]{Foss:Korshunov:Zachary:2013} $($cf. \cite[p.1042]{Watanabe:Yamamuro:2010}$)$ that 
the density of the standard log normal distribution or the Weibull distribution with parameter $\alpha\in(0,1)$ is 
 subexponential and a.n.i., but it is not regularly varying. Both distributions are known to be self-decomposable 
$($cf. \cite[p.360, p.414]{steutel:VanHarn:2003}$)$. 
\\
$(\mathrm{iv})$ We could easily find non self-decomposable examples which are covered by 
Theorem \ref{theorem:ID}. Indeed, assume that the tail of L\'evy density $g$ is given by  
the density of the standard log normal distribution or the Weibull distribution with parameter $\alpha\in(0,1)$, and further assume that 
$g$ is not monotone. Then, $\mu \in \id$ with this L\'evy density $g$ 
is not self-decomposable, while $g_1\in \cals$. 
\end{remark}

We could remove several conditions in Theorem \ref{theorem:ID} by assuming the absolute integrability of $\wh \mu$ (cf. \cite[Theorem 2]{Watanabe:2020}). 
In our case we require a stronger condition, since we treat the two sided case. Define a spectrally positive version $\mu_+$ by  
\begin{align}
\label{def:id:spectrally:positive}
 \wh \mu_+(z)= \exp\Big\{
\int_0^\infty (e^{izx}-1-izx{\bf 1}_{\{ 0 < y \le 1 \}} )\nu(dx)
\Big\}. 
\end{align}
and 
assume the absolute integrability of $\wh \mu_+$.  
Although the proof is made by a minor change to that of Theorem \ref{theorem:ID}, the result is convenient in applications 
and we state the result as a theorem.   
\begin{theorem}
\label{theorem:ID:abs}
 Let $\mu\in {\rm ID}(\R)$ with and $\nu(dx)=g(x) dx$ such that $g_1(x)$ is bounded. Suppose that 
$\int_{-\infty}^\infty |\wh \mu_+(z)|dz<\infty$, which implies $\int_{-\infty}^\infty |\wh \mu(z)|dz<\infty$, 
so that $\mu$ has a bounded continuous density $f$. Then 
the following relations hold between the properties $(\mathrm{i})$, $(\mathrm{ii})$ and $(\mathrm{iii})$ of Theorem \ref{theorem:ID}. \\
$(a)$ If $g$ is a.n.i., then we can choose $f$ such that $(\mathrm{i})$, $(\mathrm{ii})$ and $(\mathrm{iii})$ are equivalent.  \\
$(b)$ If $g$ is al.d., then we can choose $f$ such that $(\mathrm{ii})\Leftrightarrow(\mathrm{iii})$ implies $(\mathrm{i})$.
\end{theorem}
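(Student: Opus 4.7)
The plan is to adapt the proof of Theorem~\ref{theorem:ID} by using the hypothesis $\int_{-\infty}^\infty|\wh\mu_+(z)|\,dz<\infty$ as a substitute for the two of its conditions that are being dropped: $\nu(\R)=\infty$ (which was used for existence of the density $f$) and \eqref{exp:limit:trancated} (which provided the tail bound on the small-jump density). The remaining steps of Theorem~\ref{theorem:ID}'s proof are intended to carry over verbatim under either (a) or (b).

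First I would verify that $\int|\wh\mu(z)|\,dz\le\int|\wh\mu_+(z)|\,dz<\infty$. This uses the factorization
\[
\wh\mu(z)=\wh\mu_+(z)\cdot\exp\Big\{\int_{-\infty}^{0}(e^{izy}-1-izy{\bf 1}_{\{|y|\le 1\}})g(y)\,dy+iaz-\tfrac{1}{2}b^2z^2\Big\},
\]
in which the second factor is the characteristic function of an infinitely divisible probability measure (spectrally negative plus Gaussian plus drift) and therefore has modulus at most $1$. Fourier inversion then produces a bounded continuous density $f$ of $\mu$, so properties (i)--(iii) are meaningful without the assumption $\nu(\R)=\infty$.

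Next, following the decomposition $\mu=\mu_0*\mu_1$ of Theorem~\ref{theorem:ID}---with $\mu_1$ the compound Poisson of jumps $|x|>1$ at rate $\lambda=\nu(\{|x|>1\})<\infty$, and $\mu_0$ absorbing the small jumps, drift, and Gaussian---the uniform lower bound $|\wh\mu_1(z)|\ge e^{-2\lambda}$ combined with $\int|\wh\mu|<\infty$ yields $\int|\wh\mu_0|<\infty$, so $\mu_0$ admits a bounded continuous density $f_0$. Because the L\'evy measure of $\mu_0$ is compactly supported on $[-1,1]$, its Laplace exponent is entire and $\E[e^{\theta X_0}]<\infty$ for every $\theta\in\R$. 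Shifting the Fourier inversion contour from $\R$ to $\R-i\gamma$ for a suitable $\gamma>0$ then delivers the exponential bound $f_0(x)=O(e^{-\gamma x})$ as $x\to\infty$, which recovers the conclusion of \eqref{exp:limit:trancated}. With this substitution in place, the tail equivalence $f(x)\sim\nu((1,\infty))g_1(x)$, the subexponentiality of $f$, and the al.d. property follow exactly as in the proof of Theorem~\ref{theorem:ID}, and Theorem~\ref{thm:id:cp} closes the compound Poisson step.

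The main technical hurdle is justifying the contour shift in the previous paragraph: one must verify that $|\wh\mu_0(\cdot-i\gamma)|$ is integrable on the shifted line and that the vertical contributions at $\pm\infty$ vanish. This rests on a comparison of $|\wh\mu_0(w-i\gamma)|$ with $|\wh\mu_0(w)|$ through the real part of the difference of Laplace exponents, namely $\int_{-1}^{1}[\cos(wy)(e^{\gamma y}-1)-\gamma y]g(y)\,dy$. Because $g$ may be highly singular near $0$---only $\int\min(1,y^2)g(y)\,dy<\infty$ is guaranteed---this comparison requires a careful Taylor expansion at $y=0$, splitting the integral into a dominant oscillatory piece to be handled by a Riemann--Lebesgue-type argument (after cutting out a neighbourhood of $0$) and a residual piece of order $y^2$ that is controlled directly by the L\'evy measure condition.
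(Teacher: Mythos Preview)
Your overall plan—show $\int|\wh\mu|<\infty$, then replace condition~\eqref{exp:limit:trancated} by an exponential tail bound obtained from the integrability hypothesis, and finally rerun the proof of Theorem~\ref{theorem:ID}—is exactly the paper's strategy. The first step is fine, and once one has $f_r(x)=o(e^{-\gamma x})$ for the ``residual'' factor the remainder of the argument indeed carries over verbatim.

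The gap is in your exponential bound for $f_0$. Your contour shift (equivalently, exponential tilt) is performed on the \emph{two-sided} small-jump law $\mu_0$, and the comparison of $|\wh\mu_0(w-i\gamma)|$ with $|\wh\mu_0(w)|$ then hinges on the sign of $(\cos(wy)-1)(e^{\gamma y}-1)$. For $y>0$ this is $\le 0$ and helps; for $y<0$ it is $\ge 0$ and hurts. Decomposing the integrand as
\[
\cos(wy)(e^{\gamma y}-1)-\gamma y=\cos(wy)\bigl(e^{\gamma y}-1-\gamma y\bigr)+\gamma y\bigl(\cos(wy)-1\bigr),
\]
the first term is $O(y^2)$ uniformly in $w$, but the second is only $O(|y|)$ uniformly in $w$; your claimed ``residual piece of order $y^2$'' is therefore incorrect, and cutting out a neighbourhood of $0$ and invoking Riemann--Lebesgue on the rest does not control the contribution of the negative jumps. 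In general one only obtains $|\wh\mu_0(w-i\gamma)|\le C\,|\wh\mu_0(w)|^{1-\gamma}$ (essentially because $y(1-\cos(wy))\le 1-\cos(wy)$ on $[0,1]$), and a power $<1$ of an $L^1$ function need not be in $L^1$ when $b=0$.

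The paper sidesteps this entirely by tilting the \emph{spectrally positive} truncation $\mu_{r+}$ (L\'evy density $g\,{\bf 1}_{(0,c_1]}$). There the factor $(e^{\gamma y}-1)\ge 0$ for all $y$ in the support, so the pointwise inequality $|\wh\mu_{r+}^{\gamma}(z)|\le|\wh\mu_{r+}(z)|$ holds without any residual analysis; integrability of $\wh\mu_{r+}$ follows from that of $\wh\mu_+$ because the quotient is a compound Poisson ch.f.\ bounded away from $0$. Riemann--Lebesgue then gives $f_{r+}(x)=o(e^{-\gamma x})$, and the negative-jump/Gaussian/drift factor, having all exponential moments on the right, is harmless for the right tail. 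Replacing your two-sided tilt by this one-sided tilt repairs the argument; the rest of your outline is correct.
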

The proof is given in the end of Section \ref{pf:main:ID}.

\subsection{Known properties of $f$ in $\call$, $\cals_+$ or $\cals$ }
We introduce known or easily-derived properties
of $f$ in $\call$, $\cals_+$ or $\cals$. 
Since we handle a compound Poisson distribution, 
which is not absolutely continuous, 
we introduce a generalized density,
\begin{align}
\label{def:gene:density}
 \wt f(x)= p \delta (x)+q f(x),\quad p+q=1,\quad p,q\ge 0,
\end{align}
where $f$ is a proper density and $\delta(x)$ is the Dirac delta function (see \cite{Khuri:2004} for treatment of $\delta(x)$). 
Here $f$ is an ordinary function and it does not 
include $\delta(x)$. 
Notice that by direct calculations
$f\in \call \Leftrightarrow \wt f \in \call$ and $f\in \cals \Leftrightarrow \wt f \in \cals$  
clearly hold. Moreover $f\in \cals_+ \Leftrightarrow \wt f \in \cals_+$ follows, which is equivalent to 
$f\in \cals \Leftrightarrow \wt f \in \cals$ if $\wt f$ is on $\R_+$. 
The properties of $\wt f$ rely heavily on the following crucial concept by \cite[Definition 2.18]{Foss:Korshunov:Zachary:2013}, 
which the class $\call$ enjoys and which we will use frequently. 
\begin{definition}
Given a strictly positive non-decreasing function $\alpha$, 
an ultimately positive function $\beta$ is called $\alpha$-insensitive if 
\[
 \sup_{|y| \le \alpha(x)} |\beta (x+y)-\beta (x)|=o(\beta (x))\quad \text{as}\ x\to\infty, \ \text{uniformly in}\quad |y| \le \alpha(x). 
\]
\end{definition}
The following is known for $f$, but easily extended for $\wt f$ with slight modifications in the proof. 
\begin{lemma}
\label{lem:a2-1}
$(\mathrm{i})$ \cite[Lemma 2.19]{Foss:Korshunov:Zachary:2013}. 
 Let $\wt f \in \call$ and then there exists a function $\alpha$ such that $\alpha(x)\to \infty$ as 
$x\to \infty$ and $\wt f$ is $\alpha$-insensitive. \\
$(\mathrm{ii})$ \cite[Proposition 2.20]{Foss:Korshunov:Zachary:2013}. 
Given a finite collection of $\wt f_1,\ldots,\wt f_n \in \call$ we may choose a single function $\alpha$ increasing to infinity w.r.t. which 
each of functions $\wt f_i$ is $\alpha$-insensitive. \\
$(\mathrm{iii})$ \cite[Theorem 4.2, Corollary 4.5]{Foss:Korshunov:Zachary:2013}. Let $\wt f \in \call$, then 
\begin{align*}
 \liminf_{x\to\infty} \wt f\ast \wt g (x)/\wt f(x) \ge 1.
\end{align*}
Moreover, $\wt f^{\ast n} \in \call$ and 
\begin{align*}
 \liminf_{x\to\infty} \wt f^{\ast n}(x)/\wt f(x) \ge n. 
\end{align*}
$(\mathrm{iv})$ \cite[Theorem 4.3]{Foss:Korshunov:Zachary:2013}. Let $\wt f,\,\wt g \in \call$, then $\wt f\ast \wt g\in \call$. \\
$(\mathrm{v})$ cf. \cite[Proofs of Lemmas 4.12 and 4.13]{Foss:Korshunov:Zachary:2013}. 
Let $\wt f \in S$ and is $\alpha$-insensitive such that $\alpha(x)<x/2$ and $\alpha(x)\to \infty$ as $x\to \infty$. Then 
\begin{align}
\label{condi:subexponential:real}
 \int_{-\infty}^{-\alpha(x)} \wt f(x-y) \wt f(y)dy =o(\wt f(x))\quad \text{and}\quad \int_{\alpha(x)}^{x-\alpha(x)} \wt f(x-y)\wt f(y)dy = o(\wt f(x)). 
\end{align}
$(\mathrm{vi})$ \cite[Theorem 4.8]{Foss:Korshunov:Zachary:2013}. If $\wt f \in \cals_+$ and $\wt g(x)\sim c \wt f(x)$ with $c>0$, then $\wt g\in\cals_+$.
\end{lemma}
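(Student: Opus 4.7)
The central observation is that for $\wt f(x) = p\delta(x) + q f(x)$ with $p + q = 1$ and $q > 0$, one has $\wt f(x) = q f(x)$ pointwise at every $x \neq 0$. Consequently every tail statement at $+\infty$ for $\wt f$ coincides, up to the positive multiplicative constant $q$, with the corresponding statement for the ordinary density $f$; the reductions $\wt f \in \call \Leftrightarrow f \in \call$ and $\wt f \in \cals_+ \Leftrightarrow f \in \cals_+$ have already been recorded above. Hence each of (i)--(vi) reduces to its classical counterpart for ordinary densities, and the proof amounts to checking that the Dirac component does not obstruct any of the integrals or asymptotics involved.

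For (i) and (ii), apply \cite[Lemma~2.19]{Foss:Korshunov:Zachary:2013}, respectively \cite[Proposition~2.20]{Foss:Korshunov:Zachary:2013}, to the ordinary density $f$ (or to the finite collection $f_1,\ldots,f_n$) to obtain a common $\alpha(x)\uparrow\infty$ witnessing $\alpha$-insensitivity. Since $\wt f$ and $qf$ coincide on $\R\setminus\{0\}$ and $\alpha$-insensitivity is only a statement about $x \to \infty$, $\wt f$ inherits $\alpha$-insensitivity from $f$ without modification.

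For (iii)--(v) the key input is the convolution identity
\[
\wt f\ast\wt g \;=\; p_f p_g\,\delta \;+\; q_f p_g\, f \;+\; p_f q_g\, g \;+\; q_f q_g\, (f\ast g),
\]
obtained from $\delta\ast\delta = \delta$ and $\delta\ast h = h$. For every $x > 0$ the Dirac term contributes nothing, so $\wt f\ast\wt g(x)$ is a positive linear combination of $f(x)$, $g(x)$, and $(f\ast g)(x)$. Dividing by $\wt f(x) = q_f f(x)$ and passing to the $\liminf$ produces $\liminf \wt f\ast\wt g/\wt f \ge 1$ via the classical $\liminf f\ast g/f \ge 1$; iteration of the analogous expansion for $\wt f^{\ast n}$ combined with $\liminf f^{\ast n}/f \ge n$ yields the bound $\ge n$. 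Closure in (iv) is immediate because each summand in the identity is long-tailed and $\call$ is closed under finite positive combinations of tail-comparable long-tailed functions. For (v), expanding $\wt f(x-y)\wt f(y)$ into its four pieces shows that the three pieces containing at least one $\delta$ are supported at $y \in \{0, x\}$, which lies outside both integration ranges $(-\infty,-\alpha(x)]$ and $[\alpha(x), x-\alpha(x)]$ for $x$ large; only the pure piece $q_f^2\int f(x-y)f(y)\,dy$ survives and is controlled by the classical estimate of \cite[Lemmas~4.12--4.13]{Foss:Korshunov:Zachary:2013}.

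Finally, (vi) is immediate: $\wt g(x)\sim c\wt f(x)$ translates at infinity into $g(x)\sim (cq_f/q_g)f(x)$, and \cite[Theorem~4.8]{Foss:Korshunov:Zachary:2013} applied to the ordinary densities gives $g\in\cals_+$, whence $\wt g\in\cals_+$. The only genuine bookkeeping, and the step I would expect to be mildly delicate rather than truly difficult, is the tracking of the Dirac components in the convolution decompositions for (iii) and (v); once one observes that their supports are confined to the isolated points $0$ and $x$, both of which sit outside the integration regions and contribute no Lebesgue mass, the reductions to the classical results become routine.
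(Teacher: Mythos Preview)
Your reduction strategy is correct throughout: since $\wt f(x)=qf(x)$ for $x>0$ and the Dirac contributions in any convolution are concentrated at isolated points lying outside the relevant integration ranges, each item collapses to its classical counterpart for ordinary densities. Parts (i)--(iv) and (vi) are handled in the paper exactly as you do, by deferring to the cited results in \cite{Foss:Korshunov:Zachary:2013}.

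For part (v), however, the paper takes a different and somewhat slicker route. Rather than reducing to the ordinary-density estimate (which in \cite{Foss:Korshunov:Zachary:2013} is not stated as a standalone result but must be extracted from the proofs, hence the ``cf.''\ in the citation), the paper argues directly from the definition of $\cals$. One writes, using the symmetry $y\leftrightarrow x-y$,
\[
\frac{\wt f^{\ast 2}(x)}{\wt f(x)}
= 2\int_{-\infty}^{-\alpha(x)} + 2\int_{-\alpha(x)}^{\alpha(x)} + \int_{\alpha(x)}^{x-\alpha(x)} \frac{\wt f(x-y)\wt f(y)}{\wt f(x)}\,dy
=: 2I_1(x)+2I_2(x)+I_3(x),
\]
and then computes $I_2(x)=p+q\int_{-\alpha(x)}^{\alpha(x)} f(x-y)f(y)/f(x)\,dy\to p+q=1$ by $\alpha$-insensitivity and dominated convergence. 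Since the left side tends to $2$ and $I_1,I_3\ge 0$, both must vanish. This ``budget'' argument proves the generalized and ordinary cases simultaneously and avoids invoking the ordinary-density version as an input. Your approach is perfectly valid but presupposes that version; the paper's is self-contained.
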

We only give the proof for $(\mathrm{v})$. 
\begin{proof}[Proof of {\rm Lemma \ref{lem:a2-1} $(\mathrm{v})$}]
We take an insensitive function $\alpha$ for $\wt f$ and write 
\begin{align*}
 \frac{\wt f^{\ast 2}(x)}{\wt f(x)}
&= 2 \int_{-\infty}^{-\alpha(x)} \frac{\wt f(x-y)\wt f(y)}{\wt f(x)}dy + 2 \int_{-\alpha(x)}^{\alpha(x)} 
\frac{\wt f(x-y) \wt f(y)}{\wt f(x)}dy + \int_{\alpha(x)}^{x-\alpha(x)} \frac{\wt f(x-y) \wt f(y)}{\wt f(x)}dy \\
&=:2I_1(x)+2I_2(x)+I_3(x). 
\end{align*} 
By the property of $\delta(x)$ and $\wt f\in \call \Leftrightarrow f \in \call$, we have 
\[
 I_2(x)=p+q \int_{-\alpha(x)}^{\alpha(x)} f(x-y)f(y)/f(x)dy \to 1,
\]
as $x\to \infty$. Thus other integrals $I_1,I_3\ge 0$ should converge to zero. 
\end{proof}
The next result is Kesten's type bound for densities. 
\begin{lemma}[{\cite[Theorem 4.11]{Foss:Korshunov:Zachary:2013} $\&$ \cite[Theorem 2]{Finkelshtein:Tkachov:2018}}]
\label{lem:kestenbound}
Let $f\in \cals_+$ be bounded. 
Suppose that $f$ is a density on $\R_+$ or that $f$ is al.d. 
Then for any $\vep \in(0,1)$, there exist $C_\vep>0$ and $x_\vep>0$ such that 
\[
 f^{\ast n} (x) \le C_\vep (1+\varepsilon)^n f(x),\quad x>x_\vep,\,n\in \N. 
\] 
\end{lemma}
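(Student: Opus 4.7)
The plan is to establish, by strong induction on $n$, a recursion of the form $r_{n+1}\le (1+\vep) r_n + C$ for $r_n:=\sup_{x>x_\vep} f^{\ast n}(x)/f(x)$; this immediately yields $r_n\le C_\vep(1+\vep)^n$ after iterating and summing a geometric series. The base case $n=1$ is trivial with $r_1=1$. Note that under either hypothesis we in fact have $f\in\cals$ (by \cite[Lemma 4.13]{Foss:Korshunov:Zachary:2013}), so Lemma \ref{lem:a2-1}(v) applies directly to $f$.

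For the inductive step, I choose an insensitivity function $\alpha(x)\uparrow\infty$ with $\alpha(x)<x/2$ that serves $f$ and all convolution powers $f^{\ast n}$ (using Lemma \ref{lem:a2-1}(i),(ii) and the fact that $f^{\ast n}\in\call$ via (iii),(iv), together with the subexponential relation $f^{\ast n}(x)\sim n f(x)$), and split
\[
f^{\ast(n+1)}(x) = \int_{-\infty}^\infty f(x-y) f^{\ast n}(y)\,dy
\]
into four regions: $I_1$ for $y\le -\alpha(x)$, $I_2$ for $|y|<\alpha(x)$, $I_3$ for $\alpha(x)\le y\le x-\alpha(x)$, and $I_4$ for $y>x-\alpha(x)$. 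The $\alpha$-insensitivity of $f$ and the fact that $f^{\ast n}$ integrates to $1$ give $I_2\le (1+o(1))f(x)$. After substituting $u=x-y$ in $I_4$, the $\alpha$-insensitivity of $f^{\ast n}$ together with the inductive bound produces $I_4\le (1+o(1))\, r_n f(x)$; any leakage to $u<-\alpha(x)$ is harmless in the al.d.\ case because $f(x-u)\le K f(x)$ there and $\int_{-\infty}^{-\alpha(x)} f(u)\,du=o(1)$. The middle region $I_3$ is dominated, via the inductive bound on $f^{\ast n}(y)$ and Lemma \ref{lem:a2-1}(v), by $r_n\int_{\alpha(x)}^{x-\alpha(x)} f(x-y)f(y)\,dy = r_n \cdot o(f(x))\le (\vep/2) r_n f(x)$ for $x$ large enough, uniformly in $n$.

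The delicate region is $I_1$. In the one-sided case, where $f$ has support in $\R_+$, $f^{\ast n}$ also has support in $\R_+$, so $I_1=0$ and the recursion closes at once. In the al.d.\ case, for $y\le -\alpha(x)\le 0$ we have $x-y\ge x>x_0$, so the al.d.\ bound yields $f(x-y)\le K f(x)$ and therefore
\[
I_1 \le K f(x)\int_{-\infty}^{-\alpha(x)} f^{\ast n}(y)\,dy \le K f(x).
\]
Combining the four bounds yields the desired recursion $r_{n+1}\le (1+\vep) r_n + (K+1)$, and iteration produces $C_\vep$ proportional to $1+(K+1)/\vep$. The main obstacle is ensuring that the threshold $x_\vep$ and all constants are independent of $n$: this succeeds precisely because the key estimates (insensitivity of $f$, the $o(f(x))$ control from Lemma \ref{lem:a2-1}(v), and the al.d.\ bound) are intrinsic properties of $f$, whereas the uniform insensitivity of the convolution powers follows from $f^{\ast n}(x)\sim n f(x)$, a consequence of $f\in\cals$.
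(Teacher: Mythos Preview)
The paper does not prove this lemma; it is quoted directly from \cite[Theorem 4.11]{Foss:Korshunov:Zachary:2013} and \cite[Theorem 2]{Finkelshtein:Tkachov:2018}. Your inductive scheme is the standard one used in those references, and the decomposition into $I_1,\dots,I_4$ together with the recursion $r_{n+1}\le(1+\vep)r_n+C$ is the right skeleton.

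There is, however, a genuine circularity in your treatment of $I_4$. You write that you choose a single $\alpha$ that is insensitive for \emph{all} convolution powers $f^{\ast n}$, and justify this by saying ``the uniform insensitivity of the convolution powers follows from $f^{\ast n}(x)\sim n f(x)$''. But the asymptotic $f^{\ast n}(x)\sim n f(x)$ holds only for each \emph{fixed} $n$, and its rate of convergence in $x$ degrades with $n$; quantifying that degradation uniformly in $n$ is precisely the content of Kesten's bound. Thus you cannot appeal to it to obtain a threshold $x_\vep$ independent of $n$ in $I_4$. Concretely, the step ``$\alpha$-insensitivity of $f^{\ast n}$ $\Rightarrow$ $f^{\ast n}(x-u)\le(1+o(1))f^{\ast n}(x)$ uniformly in $|u|\le\alpha(x)$'' produces an $o(1)$ that depends on $n$.

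The fix is immediate and keeps your proof intact: reverse the order of the two ingredients in $I_4$. After the substitution $u=x-y$, for $|u|\le\alpha(x)$ one has $x-u>x-\alpha(x)>x_\vep$, so the inductive hypothesis gives $f^{\ast n}(x-u)\le r_n f(x-u)$ first, and then the insensitivity of $f$ alone yields $f(x-u)\le(1+o(1))f(x)$ with an $o(1)$ that depends only on $f$. The same device handles the ``leakage'' to $u<-\alpha(x)$: first $f^{\ast n}(x-u)\le r_n f(x-u)$ by induction (since $x-u>x$), then $f(x-u)\le K f(x)$ by the al.d.\ property of $f$. With this correction every $o(1)$ in your four pieces is an intrinsic property of $f$, and the recursion closes with an $n$-free threshold as required. (Your sentence about leakage already writes ``$f(x-u)\le K f(x)$'', which suggests you may have had this order in mind; if so, the earlier mention of insensitivity of $f^{\ast n}$ should simply be deleted.)
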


\subsection{New properties of $f$ in $\call$, $\cals_+$ or $\cals$ with al.d/a.n.i. condition}
We close this section with the following results, which are crucial for proving main theorems and 
which are significant in themselves for analyzing tail behavior of densities under the al.d. or a.n.i. assumption.  
Proofs of the results needed for the main results are given in Section \ref{subsec:pr:other:section3}, 
while that Theorem \ref{prop:a3-5} (convolution root)
 is given in Appendix \ref{append:proofs}, which is not directly related with the main theorems.  
The first two lemmas are easy properties of al.d. and a.n.i. respectively.
\begin{lemma}
\label{lem:equiv:ani:function}
 Let $f$ be a density with the a.n.i. property. Then 
\begin{align}
\label{lem:ani:function}
\text{There exists a positive non-increasing function}\ \alpha\ \text{such that} \lim_{x\to\infty} f(x)/\alpha (x)=1. 
\end{align}
Conversely, if $f(x)\to 0$ as $x\to \infty$, then \eqref{lem:ani:function} implies the a.n.i. property of $f$. 
In particular, $f\in\call$ with the condition \eqref{lem:ani:function} implies that $f$ is a.n.i. 
\end{lemma}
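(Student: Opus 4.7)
The plan is to treat the forward direction, the converse, and the ``in particular'' clause separately. For the forward direction, I would take $\alpha(x) := \sup_{t \ge x} f(t)$, which is non-increasing by construction, positive since $\alpha(x) \ge f(x) > 0$ on the tail, and finite for $x$ large by local boundedness of $f$ together with the a.n.i.\ hypothesis; the first equivalence in \eqref{eq:def:ani} is literally $\alpha(x)/f(x) \to 1$.

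For the converse, the key tool is the sandwich $(1-\vep)\alpha(t) \le f(t) \le (1+\vep)\alpha(t)$ valid for $t \ge x_\vep$. Monotonicity of $\alpha$ gives immediately
\[
 f(x) \le \sup_{t \ge x} f(t) \le (1+\vep)\sup_{t \ge x}\alpha(t) = (1+\vep)\alpha(x),
\]
which, after dividing by $f(x) \sim \alpha(x)$ and sending $\vep \to 0$, recovers the sup half of \eqref{eq:def:ani}. For the inf half I would split $[x_0,x]$ into the pre-tail compact window $[x_0,x_\vep]$ and the tail $[x_\vep,x]$: on the latter, monotonicity of $\alpha$ gives $f(t) \ge (1-\vep)\alpha(x)$; on the former, positivity and local boundedness of $f$ provide a strictly positive minimum $m_\vep$; and the hypothesis $f(x) \to 0$ (equivalently $\alpha(x)\to 0$) guarantees that eventually $(1-\vep)\alpha(x) < m_\vep$, so the compact window becomes irrelevant. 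Combined with the trivial upper bound $\inf_{x_0 \le t \le x} f(t) \le f(x)$, this yields the second half of \eqref{eq:def:ani}.

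For the ``in particular'' clause, I would observe that any non-increasing positive $\alpha$ converges to some $c \ge 0$, whence $f(x) \to c$ by \eqref{lem:ani:function}. Since $f$ is a density, $c > 0$ would make $\int_M^\infty f(t)\,dt = \infty$, a contradiction; hence $c = 0$ and the converse direction applies (the $\call$ hypothesis is in fact not strictly needed once one invokes integrability of $f$, although assuming $f\in\call$ matches the way the lemma will be used later).

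The main obstacle is the inf asymptotic in the converse: the tail equivalence $f \sim \alpha$ carries no information on the pre-tail compact window $[x_0,x_\vep]$, and a priori the infimum there could dominate $\alpha(x)$. This is exactly where the assumption $f(x) \to 0$ is essential — without forcing $\alpha(x) \to 0$ one could not flush out the pre-tail window and control the global infimum by $\alpha(x)$ alone.
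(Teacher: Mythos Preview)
Your argument is correct and close in spirit to the paper's, with two small differences worth noting.

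For the inf half of the converse, the paper takes a different route: since $f(x)\to 0$, for each $x$ one can pick $y_x>x$ so large that $\inf_{x_0\le t\le x} f(t)\ge \inf_{x\le t\le y_x} f(t)$, and then bounds the right-hand side below by $\bigl(\inf_{x\le t\le y_x} f(t)/\alpha(t)\bigr)\cdot \alpha(x)$ using monotonicity of $\alpha$; sending $y_x\to\infty$ and then $x\to\infty$ gives the result. Your pre-tail/tail split is equally valid and arguably more transparent, but the justification ``positivity and local boundedness provide a strictly positive minimum $m_\vep$'' is not quite right: local boundedness concerns upper bounds, and pointwise positivity alone does not force a positive infimum on a compact set. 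The easy fix is to \emph{choose} $x_0$ large enough that $f(t)\ge \tfrac12\alpha(t)$ on $[x_0,\infty)$ (using the very hypothesis $f\sim\alpha$), so that $m_\vep\ge \tfrac12\alpha(x_\vep)>0$.

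For the final clause, the paper simply invokes the standard fact that $f\in\call$ implies $f(x)\to 0$. Your integrability argument is a correct alternative and, as you note, shows that the $\call$ assumption is not strictly necessary for this implication.
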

\begin{lemma}
\label{lem:asymp:eqive:ald}
Suppose that $\wt f$ is al.d. and $\lim_{x\to\infty} \wt f(x)/\wt g(x)=c$ with some $c>0$. Then $\wt g$ is al.d. 
\end{lemma}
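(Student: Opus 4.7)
The plan is to exploit the two-sided bound that asymptotic equivalence $\wt f(x)/\wt g(x)\to c>0$ gives, together with the defining inequality of al.d. for $\wt f$, to transfer the inequality over to $\wt g$ at the cost of worsening the constant $K$.

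First I would fix $\vep\in(0,c)$ so that $c-\vep>0$. By the assumption $\wt f(x)/\wt g(x)\to c$, there exists $x_1>0$ such that for all $x>x_1$,
\[
(c-\vep)\wt g(x)\;\le\;\wt f(x)\;\le\;(c+\vep)\wt g(x).
\]
Next, since $\wt f$ is al.d., pick $x_0>0$ and $K>0$ with $\wt f(x+y)\le K\wt f(x)$ for all $x>x_0$ and $y>0$. Set $x_0'=x_0\vee x_1$; note that for $x>x_0'$ and $y>0$ we automatically have $x+y>x_1$, so both inequalities above apply at the point $x+y$ as well as at $x$.

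The computation is then a three-line sandwich: for $x>x_0'$ and $y>0$,
\[
\wt g(x+y)\;\le\;\frac{1}{c-\vep}\wt f(x+y)\;\le\;\frac{K}{c-\vep}\wt f(x)\;\le\;\frac{K(c+\vep)}{c-\vep}\wt g(x),
\]
so taking $K'=K(c+\vep)/(c-\vep)$ establishes the al.d. property of $\wt g$ with constants $(x_0',K')$.

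There is no real obstacle here; the only point deserving care is to ensure that the asymptotic equivalence is used in both directions (upper bound on $\wt f(x+y)$ in terms of $\wt g(x+y)$ is not needed; what one needs is the lower bound for $\wt g(x+y)$ in terms of $\wt f(x+y)$ and the upper bound for $\wt f(x)$ in terms of $\wt g(x)$), and to choose $x_0'$ large enough that both the al.d. inequality and the tail equivalence bounds are in force on the relevant range.
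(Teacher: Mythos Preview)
Your proof is correct and follows essentially the same approach as the paper: both arguments sandwich $\wt g(x+y)$ by passing through $\wt f(x+y)$ via the asymptotic equivalence, applying the al.d.\ inequality for $\wt f$, and then passing back to $\wt g(x)$. Your version is in fact slightly more careful in tracking the general constant $c$, whereas the paper's write-up implicitly normalizes to $c=1$.
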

\begin{lemma}[Asymptotic equivalence]
\label{lem:a3-2}
Suppose $\wt f\in \cals$ and 
\begin{align}
\label{asmp:lem:a3-2}
\lim_{x\to\infty} \wt f(x)/\wt g(x)=c\quad  \text{for some} \quad c \in (0,\infty). 
\end{align} 
If $\wt g$ or $\wt f$ is al.d., or $\wt g(-x)=O(\wt f(-x))$ then $\wt g \in \cals$. 
\end{lemma}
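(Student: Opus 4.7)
The plan is to verify the two defining conditions $\wt g\in\call$ and $\wt g^{\ast 2}(x)\sim 2\wt g(x)$ separately. The first is immediate from the factorization
\[
\frac{\wt g(x+y)}{\wt g(x)} = \frac{\wt g(x+y)}{\wt f(x+y)}\cdot\frac{\wt f(x+y)}{\wt f(x)}\cdot\frac{\wt f(x)}{\wt g(x)},
\]
whose three factors tend to $c^{-1}$, $1$, $c$ by \eqref{asmp:lem:a3-2} and $\wt f\in\call$. All the content of the proof therefore lies in the convolution asymptotics.

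For that part I would mimic the split used in the proof of Lemma \ref{lem:a2-1}(v). By Lemma \ref{lem:a2-1}(ii) I choose a single $\alpha(x)\to\infty$ with $\alpha(x)<x/2$ for which both $\wt f$ and $\wt g$ are insensitive, and decompose
\[
\wt g^{\ast 2}(x) = 2J_1(x) + 2J_2(x) + J_3(x),
\]
where $J_1,J_2,J_3$ denote the integrals of $\wt g(x-y)\wt g(y)$ over $(-\infty,-\alpha(x)]$, $[-\alpha(x),\alpha(x)]$, and $[\alpha(x),x-\alpha(x)]$ respectively. Insensitivity of $\wt g$ together with $\int\wt g = 1$ force $J_2(x) = (1+o(1))\wt g(x)$, so it remains to show $J_1(x),\,J_3(x) = o(\wt g(x))$. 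The bound on $J_3$ is routine: on its range both $y$ and $x-y$ exceed $\alpha(x)\to\infty$, so \eqref{asmp:lem:a3-2} provides the uniform estimate $\wt g(y)\wt g(x-y)\le(c^{-1}+\varepsilon)^2\wt f(y)\wt f(x-y)$, which reduces $J_3(x)=o(\wt g(x))$ to the bound $\int_{\alpha(x)}^{x-\alpha(x)}\wt f(y)\wt f(x-y)dy=o(\wt f(x))$ from Lemma \ref{lem:a2-1}(v) applied to $\wt f\in\cals$.

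The main obstacle is $J_1$, because in its range $y\to-\infty$ and \eqref{asmp:lem:a3-2} says nothing about $\wt g$ on the negative axis. This is precisely what the three alternative hypotheses address. If $\wt g(-x)=O(\wt f(-x))$, then $\wt g(y)\le C\wt f(y)$ throughout the range and $\wt g(x-y)\le(c^{-1}+\varepsilon)\wt f(x-y)$ since $x-y\to\infty$, so $J_1$ is again dominated by $\int_{-\infty}^{-\alpha(x)}\wt f(y)\wt f(x-y)dy=o(\wt f(x))$ via Lemma \ref{lem:a2-1}(v). If either $\wt f$ or $\wt g$ is al.d., Lemma \ref{lem:asymp:eqive:ald} propagates the property to the other (using $c^{-1}$ in place of $c$ if needed); then for $y<-\alpha(x)$ the shift satisfies $x-y>x$, so the al.d. bound applies directly to give $\wt g(x-y)\le K\wt g(x)$, whence
\[
J_1(x) \le K\wt g(x)\int_{-\infty}^{-\alpha(x)} \wt g(y)\,dy = o(\wt g(x))
\]
by integrability of $\wt g$. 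Combining the three estimates yields $\wt g^{\ast 2}(x)\sim 2\wt g(x)$, which together with $\wt g\in\call$ gives $\wt g\in\cals$.
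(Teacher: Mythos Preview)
Your proof is correct and follows essentially the same approach as the paper: the same three-part decomposition $2J_1+2J_2+J_3$ with the same insensitive function, the same treatment of $J_2$ and $J_3$, and the same case split on $J_1$. The only cosmetic difference is that for the al.d.\ case you invoke Lemma~\ref{lem:asymp:eqive:ald} to transfer the property to $\wt g$ and then argue once, whereas the paper handles the $\wt f$ al.d.\ and $\wt g$ al.d.\ cases in parallel; both are equally valid.
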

\begin{proposition}[Convolution and factrization]
\label{prop:a3-3}
Let $h=f\ast \wt g$ be the convolution of a density $f$ and a generalized density $\wt g$.\\
$(\mathrm{i})$ Let $f\in \cals_+$ and $\wt g(x)=o(f(x))$. If $f$ is al.d.
or $g(-x)=O(f(-x))$ as $x\to\infty$, then $h\in \cals_+$ and $h(x) \sim f(x)$. \\
$(\mathrm{ii})$ Let $h \in \cals_+$ and $\wt g(x)=o(h(x))$. 
If $f$ is a.n.i. and $[$$h$ is a.n.i. or $ \wt g(x)=o(f(x))$$]$, then $f\in \cals_+$ and moreover $h(x)\sim f(x)$.  
\end{proposition}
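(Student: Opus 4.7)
The plan for both parts is to first establish the tail equivalence $h(x)\sim f(x)$, and then invoke Lemma \ref{lem:a3-2} to transfer membership in $\cals_+$ between $h$ and $f$.

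For part $(\mathrm{i})$, I would proceed by the standard convolution-splitting argument. Using Lemma \ref{lem:a2-1}(ii), pick a common insensitivity function $\alpha(x)\to\infty$ with $\alpha(x)<x/2$ for $f$ and $\wt g$, and decompose $h(x)=\int f(x-y)\wt g(y)\,dy$ into the central piece $|y|\le\alpha(x)$ and four tails: the right bulk $y\in(\alpha(x),\,x-\alpha(x))$, the right near-$x$ piece $y\in(x-\alpha(x),\,x+\alpha(x))$, the far right $y>x+\alpha(x)$, and the left tail $y<-\alpha(x)$. By $\alpha$-insensitivity of $f$ the central piece is $\sim f(x)\int_{|y|\le\alpha(x)}\wt g(y)\,dy\to f(x)$. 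The right-bulk piece is $o(f(x))$ using $\wt g(y)=o(f(y))$ together with Lemma \ref{lem:a2-1}(v). The near-$x$ piece is $\sim \wt g(x)\int f(u)\,du = o(f(x))$ by $\alpha$-insensitivity of $\wt g$ and $\wt g(x)=o(f(x))$. The far-right piece reduces, after $z=y-x$, to a tail of an $f\ast f$-type integral bounded by Lemma \ref{lem:a2-1}(v). The left tail is $o(f(x))$ either by the al.d.\ bound $f(x-y)\le K f(x)$ (so the piece is at most $K f(x)\int_{-\infty}^{-\alpha(x)}\wt g\to 0$), or, via $g(-x)=O(f(-x))$, again by the $f\ast f$-tail estimate of Lemma \ref{lem:a2-1}(v). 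Collecting, $h(x)\sim f(x)$, and Lemma \ref{lem:a3-2} with $c=1$ promotes this to $h\in\cals_+$.

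Part $(\mathrm{ii})$ is dualized but needs more care, since the a.n.i.\ hypothesis on $f$ does not by itself imply $f\in\call$. The plan is again to show $h(x)\sim f(x)$ and then apply Lemma \ref{lem:a3-2}. By Lemma \ref{lem:equiv:ani:function} there is a non-increasing $\alpha_f$ with $f\sim\alpha_f$, and in the subcase where $h$ is a.n.i.\ likewise $h\sim\alpha_h$ non-increasing. Fix an insensitivity function $\alpha$ for $h$ (available since $h\in\call$) and for $\wt g$, and split $h(x)=\int f(x-y)\wt g(y)\,dy$ as in $(\mathrm{i})$. The central piece is bracketed through the monotone majorant $\alpha_f$ between $(1-o(1))\alpha_f(x+\alpha(x))$ and $(1+o(1))\alpha_f(x-\alpha(x))$, times $\int_{|y|\le\alpha(x)}\wt g(y)\,dy\to 1$. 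The tails are handled by the dichotomy: if $h$ is a.n.i.\ (hence al.d.), the bound combined with $\wt g(y)=o(h(y))$ yields tails $o(h(x))$; if instead $\wt g(y)=o(f(y))$, the tails reduce, as in $(\mathrm{i})$, to $f\ast f$-type pieces bounded by Lemma \ref{lem:a2-1}(v). Combining the bracket with the tail bounds and the long-tailedness of $h\in\cals_+$ forces both sides of the bracket to collapse to $f(x)$ and yields $h(x)\sim f(x)$. Long-tailedness of $f$ is then inherited from $h\in\call$ via $f\sim h$, and Lemma \ref{lem:a3-2} (using the a.n.i., hence al.d., property of $f$) transfers $\cals_+$ from $h$ to $f$.

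The main obstacle is the bracketing step in part $(\mathrm{ii})$: one needs two-sided estimates on the central piece without assuming long-tailedness of $f$, and then a bootstrap to collapse the brackets to $f(x)$. My plan is to carry out the initial bracket purely through the non-increasing majorant provided by Lemma \ref{lem:equiv:ani:function}, and then to close the loop by combining with insensitivity of $h$ in each dichotomy subcase, so that $f\in\call$ emerges only a posteriori from $f\sim h$.
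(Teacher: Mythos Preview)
Your part $(\mathrm{i})$ is essentially the paper's argument. One small slip: you cannot pick an insensitivity function for $\wt g$, since $\wt g\in\call$ is nowhere assumed. The paper merges your near-$x$ and far-right pieces into a single integral $\int_{x-\alpha(x)}^\infty$, changes variable to $\int_{-\infty}^{\alpha(x)}$, and uses only $\wt g(x)=o(f(x))$ together with the already-handled left-tail and central pieces of $f\ast f$; your near-$x$ piece can be repaired the same way, or by noting that $\wt g(y)\le \varepsilon f(y)\sim \varepsilon f(x)$ for $|y-x|\le\alpha(x)$ via insensitivity of $f$ alone.

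Part $(\mathrm{ii})$ has a genuine gap: your tail estimates are circular. In the subcase $\wt g(x)=o(f(x))$ you say the tails ``reduce, as in $(\mathrm{i})$, to $f\ast f$-type pieces bounded by Lemma \ref{lem:a2-1}(v)'', but Lemma \ref{lem:a2-1}(v) requires $f\in\cals$, which is precisely what you are trying to prove. In the subcase where $h$ is a.n.i.\ you still have the factor $f(x-y)$ in every tail integral, and without a bound of the form $f\le c\,h$ you cannot replace it by $h$ to exploit $h\in\cals_+$. Your bracketing-then-bootstrap scheme for the central piece is fine in isolation, but it yields $h(x)\sim f(x)$ only \emph{after} the tails are shown to be $o(h(x))$, so the circularity is not broken.

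The paper resolves this by first proving the one-sided bound $\liminf_{x\to\infty} h(x)/f(x)\ge 1$ directly from the a.n.i.\ property of $f$ and $h\in\call$, \emph{before} doing any convolution split: for $z<0$ and large $c$,
\[
h(x+z)\ \ge\ \int_c^x \wt g(x+z-y)\,f(y)\,dy\ \ge\ \Big(\inf_{y\in[c,x]}f(y)\Big)\int_z^{x+z-c}\wt g(y)\,dy,
\]
and one lets $x\to\infty$ (using a.n.i.\ of $f$ and $h\in\call$) and then $z\to-\infty$. Only with $\limsup f/h\le 1$ in hand are the tail pieces $I_1,I_3,I_4$ controllable: each occurrence of $f$ in a tail integral is dominated by $(1+\varepsilon)h$, and then $h\in\cals_+$, $\wt g=o(h)$, and the a.n.i.\ property (of $f$ or of $h$, depending on the subcase) finish the job.
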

\begin{theorem}[Convolution root]
\label{prop:a3-5}
Assume that $\wt f^{\ast n} \in \cals_+$ for some $n\in\N$, and $\wt f^{\ast n}$ is a.n.i.
If $\wt f^{\ast k},\,k=1\ldots,n-1$ are a.n.i. or if $\wt f\in \call$, then $\wt f\in \cals$. 
\end{theorem}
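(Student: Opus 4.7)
The plan is to argue by induction on $n$. The base case $n=1$ follows immediately, since then $\wt f = \wt f^{\ast 1}$ is itself in $\cals_+$ and a.n.i., and under the a.n.i.\ property the equivalence $\cals_+\Leftrightarrow \cals$ holds. For $n\ge 2$, assume the statement for all smaller positive integers and consider $\wt f$ with $\wt f^{\ast n}\in\cals_+$ a.n.i., together with the auxiliary hypothesis.

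First I would secure $\wt f\in\call$. In the alternative where this is directly assumed, nothing is required. Otherwise, under the a.n.i.\ hypothesis on each $\wt f^{\ast k}$, $k=1,\ldots,n-1$, I would apply Lemma~\ref{lem:equiv:ani:function} to obtain non-increasing envelopes for every intermediate convolution, then combine $\wt f^{\ast n}\in\call$ with the factorization $\wt f^{\ast n}=\wt f\ast\wt f^{\ast(n-1)}$ and Lemma~\ref{lem:a2-1}(iv) to extract long-tailedness of $\wt f$.

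Next I would reduce the problem to the inductive hypothesis by establishing $\wt f^{\ast(n-1)}\in\cals_+$. The subexponentiality of $\wt f^{\ast n}$ yields the key identity
\[
(\wt f^{\ast n})^{\ast 2}(x)=\wt f^{\ast(2n)}(x)\sim 2\,\wt f^{\ast n}(x), \qquad x\to\infty.
\]
Using Kesten's bound (Lemma~\ref{lem:kestenbound}) applied to $\wt f^{\ast n}$, the monotone envelope from Lemma~\ref{lem:equiv:ani:function}, the liminf comparisons of Lemma~\ref{lem:a2-1}(iii), and the convolution splitting of Lemma~\ref{lem:a2-1}(v), I would compare the tails of $(\wt f^{\ast(n-1)})^{\ast 2}$ and $\wt f^{\ast(n-1)}$, then invoke the asymptotic equivalence closure Lemma~\ref{lem:a3-2}. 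This would deliver $(\wt f^{\ast(n-1)})^{\ast 2}\sim 2\,\wt f^{\ast(n-1)}$ and hence $\wt f^{\ast(n-1)}\in\cals_+$. The inductive hypothesis then applies to $\wt f^{\ast(n-1)}$, whose own lower convolutions $\wt f^{\ast k}$ for $k=1,\ldots,n-2$ remain a.n.i.\ by assumption (or inherit long-tailedness under the $\call$ alternative), and yields $\wt f\in\cals$.

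The main obstacle is the reduction step. Unlike the classical Embrechts--Goldie convolution root for distributions, the density version cannot be handled by directly invoking the factorization Proposition~\ref{prop:a3-3}(ii), because $\wt f=o(\wt f^{\ast n})$ fails whenever $\wt f$ is itself in $\cals$ (the two quantities are of the same order). The a.n.i.\ hypothesis on $\wt f^{\ast n}$ is what rescues the argument: it supplies a non-increasing tail envelope that enables Kesten's bound, the convolution splitting of Lemma~\ref{lem:a2-1}(v), and the delicate two-sided pointwise comparisons, none of which are available under long-tailedness alone. The two-sided support of $\wt f$ requires additional care compared with the half-line setting of Watanabe~\cite{Watanabe:2020}.
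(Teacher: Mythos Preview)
Your induction scheme has two concrete gaps.

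\textbf{Securing $\wt f\in\call$ in the a.n.i.\ case.} Lemma~\ref{lem:a2-1}(iv) goes the wrong way: it says that if both factors are long-tailed then so is their convolution, not the converse. You cannot extract $\wt f\in\call$ from $\wt f^{\ast n}\in\call$ and the factorization $\wt f^{\ast n}=\wt f\ast\wt f^{\ast(n-1)}$ by that lemma. Nor does the a.n.i.\ property of $\wt f$ alone give $\call$ (e.g.\ $e^{-x}$ on $\R_+$ is non-increasing but not long-tailed). The paper does not attempt this; instead it establishes the ratio $\wt f^{\ast n}(x)/\wt f(x)\to n$ directly, and long-tailedness of $\wt f$ then falls out of the asymptotic equivalence with $\wt f^{\ast n}\in\call$.

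\textbf{The reduction step.} Your plan to prove $\wt f^{\ast(n-1)}\in\cals_+$ and invoke the inductive hypothesis is where the real work lies, and the tools you list do not combine in the way you suggest. Kesten's bound on $\wt f^{\ast n}$ controls $\wt f^{\ast nk}$, not $\wt f^{\ast(n-1)}$. Lemma~\ref{lem:a2-1}(iii) requires $\wt f\in\call$, which in the a.n.i.\ branch you have not yet established. Lemma~\ref{lem:a2-1}(v) presupposes subexponentiality of the density whose convolution you are splitting, so it cannot be applied to $\wt f^{\ast(n-1)}$ a priori. And to invoke Lemma~\ref{lem:a3-2} you would need $\wt f^{\ast(n-1)}(x)\sim c\,\wt f^{\ast n}(x)$, which is precisely the kind of tail comparison that constitutes the hard part of the theorem. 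In short, the reduction from $n$ to $n-1$ is not easier than the original problem.

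The paper's proof does not use induction. In the a.n.i.\ case it builds a recursive integral inequality (equation~\eqref{ineq:reccursion}) that telescopes the convolution $\wt f^{\ast n}(x+(n-1)z)$ down to a sum of $n$ terms, each asymptotic to $\wt f(x)$; this yields $\liminf \wt f^{\ast n}/\wt f\ge n$. A mirror-image recursion (equation~\eqref{ineq:reccursion2}) with an insensitive function substituted for the shift gives the matching $\limsup$, after bounding residual cross-terms via \eqref{limsup:fk:fn} and Lemma~\ref{lem:a2-1}(v) applied to $\wt f^{\ast n}$ (which \emph{is} known to be in $\cals$). In the $\call$ case the paper decomposes $1=\wt f^{\ast(n-1)}\ast\wt f/\wt f^{\ast n}$ into five integrals, and the key input is the bound $\limsup \wt f^{\ast(n-1)}/\wt f^{\ast n}\le 1-n^{-1}$, obtained from $\wt f^{\ast n(n-1)}/\wt f^{\ast n}\to n-1$ together with Lemma~\ref{lem:a2-1}(iii). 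Either route delivers $\wt f^{\ast n}(x)/\wt f(x)\to n$ directly, after which Lemma~\ref{lem:a3-2} closes the argument.
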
 

\begin{remark}
$(\mathrm{i})$ 
For positive-half densities, the long-tailed property $\call$ plays the most fundamental role for 
deriving various tail properties $($see \cite{Watanabe:2020}$)$. One could see in above that the a.n.i. property could 
play a role of $\call$ in several cases especially for densities on $\R$. However, 
it remains to be seen whether the a.n.i. characteristic has similar properties to that of $\call$  
such as closedness under convolution. An open question is that under what conditions the a.n.i. property is retained.  
\\
$(\mathrm{ii})$
Watanabe and Yamamuro \cite{Watanabe:Yamamuro:2017} have proved that
the class of subexponential densities is neither closed under asymptotic equivalence nor closed under convolution roots. 
Therefore in Lemma \ref{lem:a3-2} and Theorem \ref{prop:a3-5}, we need additional conditions other than the subexponentiality.\\
$(\mathrm{iii})$ Kl\"uppelberg and Villasenor
\cite{Kluppelberg:Villasenor:1991} have negatively solved the convolution closure problem of $\cals$ for 
positive-half densities, and thus we need an additional condition in Proposition \ref{prop:a3-3}, too. 
Interestingly, in their examples, there exist a density $f$ such that $f\in \call$ and $f$ is a.n.i. but $f\not\in \cals$. 
\end{remark}

\section{The compound Poisson case}
\label{sec:cp}
Recall that we always assume a density for the L\'evy measure. 
Let $g$ be a density on $\R$. For $\lambda>0$ define the compound Poisson probability measure by 
\[
 \mu(dx)=e^{-\lambda} \delta_0(dx)+(1-e^{-\lambda})f(x)dx,
\]
where 
\begin{align}
\label{def:cp:acp}
 f(x) = (e^\lambda-1)^{-1} \sum_{n=1}^\infty (\lambda^n /n!)\, g^{\ast n}(x). 
\end{align}
We call $f$ the proper absolutely continuous part of $\mu$. 
\begin{theorem}
\label{thm:compundpoi}
 Suppose that $g$ or equivalently $f$ is bounded. 
The following assertions are equivalent. 
\begin{align*}
(\mathrm{i})\quad & \quad f\in \cals_+\ \text{and}\ f\ \text{is al.d.} \\ 
(\mathrm{ii})\quad & \quad g\in \cals_+\ \text{and}\ g\ \text{is al.d.} \\
(\mathrm{iii})\quad & \quad g\in \call\ \text{and}\ g\ \text{is al.d.}\quad \&\quad \lim_{x\to \infty} f(x)/g(x) =\lambda/(1-e^{-\lambda}). 
\end{align*}
\end{theorem}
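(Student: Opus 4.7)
The plan is to close the ring (ii)$\Rightarrow$(iii), (iii)$\Rightarrow$(ii)$\&$(i), and (i)$\Rightarrow$(iii), exploiting the series~\eqref{def:cp:acp}, Kesten's bound (Lemma~\ref{lem:kestenbound}), and the convolution/equivalence machinery of Section~\ref{sec:main:results}.

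For (ii)$\Rightarrow$(iii), under $g\in\cals_+$ and al.d.\ one inductively obtains $g^{*n}(x)\sim n\,g(x)$ by Proposition~\ref{prop:a3-3}(i) (applied with $f:=g$ and $\tilde g:=g^{*(n-1)}$), while Lemma~\ref{lem:kestenbound} supplies the uniform dominating bound $g^{*n}(x)\le C_\varepsilon(1+\varepsilon)^n g(x)$. Dominated convergence applied to
\[
(e^\lambda-1)\,\frac{f(x)}{g(x)} \;=\; \sum_{n\ge 1}\frac{\lambda^n}{n!}\cdot\frac{g^{*n}(x)}{g(x)}
\]
then produces $f(x)/g(x)\to(e^\lambda-1)^{-1}\sum_n n\lambda^n/n!=\lambda/(1-e^{-\lambda})$, which together with $g\in\cals_+\subset\call$ delivers (iii).

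For (iii)$\Rightarrow$(ii), I use a Fatou-saturation argument. From $f\sim cg$ with $c=\lambda/(1-e^{-\lambda})$ and Lemma~\ref{lem:a2-1}(iii), which gives $\liminf_x g^{*n}(x)/g(x)\ge n$, Fatou applied to the series above forces
\[
\lambda e^\lambda \;=\; \sum_n\frac{\lambda^n}{n!}\liminf_x\frac{g^{*n}(x)}{g(x)} \;\ge\; \sum_n n\,\frac{\lambda^n}{n!} \;=\; \lambda e^\lambda,
\]
so $\liminf_x g^{*n}(x)/g(x)=n$ for every $n$. A Scheffé-type upgrade then promotes this to $\lim_x g^{*n}(x)/g(x)=n$: if $g^{*n_0}(x_k)/g(x_k)\to L>n_0$ along a subsequence, Fatou applied to the complementary tail $\sum_{n\ne n_0}(\lambda^n/n!)g^{*n}(x_k)/g(x_k)$ produces a strict inequality contradicting $(e^\lambda-1)f(x_k)/g(x_k)\to\lambda e^\lambda$. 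Specializing to $n=2$ gives $g\in\cals_+$, and al.d.\ of $g$ is part of (iii). The implication (iii)$\Rightarrow$(i) now follows by applying Lemma~\ref{lem:a3-2} (subexponentiality under tail equivalence) and Lemma~\ref{lem:asymp:eqive:ald} (al.d.\ under tail equivalence) to $f\sim cg$.

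The main obstacle is (i)$\Rightarrow$(iii). Given $f\in\cals_+$ al.d., I aim to recover $f\sim cg$ and $g\in\call$ al.d. The plan is to pair the compound-Poisson binomial identity
\[
(1-e^{-n\lambda})f_n(x)=\sum_{k=1}^n\binom{n}{k}e^{-\lambda(n-k)}(1-e^{-\lambda})^k f^{*k}(x),
\]
where $f_n=(e^{n\lambda}-1)^{-1}\sum_k(n\lambda)^k/k!\,g^{*k}$ is the Poisson$(n\lambda)$-compound proper density built from the same $g$, with the asymptotics $f^{*k}(x)\sim kf(x)$ from $f\in\cals_+$ to deduce $f_n(x)\sim\kappa_n f(x)$ with $\kappa_n=n(1-e^{-\lambda})/(1-e^{-n\lambda})$; Lemma~\ref{lem:asymp:eqive:ald} then transfers al.d.\ to $f_n$. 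It remains to invert the relation between $f_n$ and the jump density $g$ to conclude $g(x)\sim(1-e^{-\lambda})/\lambda\cdot f(x)$. This inversion is the delicate step: the Fourier identity $\lambda\hat g=\log(1+(e^\lambda-1)\hat f)$ produces a signed series $\lambda g=\sum_{k\ge 1}(-1)^{k+1}(e^\lambda-1)^k f^{*k}/k$ that only converges absolutely for $\lambda<\ln 2$, so for general $\lambda$ one has to combine careful truncation, the Kesten-type remainder control of $f^{*k}$ (Lemma~\ref{lem:kestenbound}), and the al.d.\ property of $f$ to justify the passage to the limit and isolate $g$ at the density level rather than merely at the distribution level.
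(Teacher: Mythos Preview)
Your arguments for (ii)$\Rightarrow$(iii), (iii)$\Rightarrow$(ii), and (iii)$\Rightarrow$(i) are essentially the paper's: dominated convergence via Kesten's bound for the first, and for (iii)$\Rightarrow$(ii) the paper simply isolates $g^{\ast 2}$ from the series, divides by $g$, and applies Fatou to the $n\ne 2$ terms using $\liminf g^{\ast n}/g\ge n$, obtaining $\limsup g^{\ast 2}/g\le 2$ directly. Your ``Schef\'e-type upgrade'' is a cosmetic variant of the same idea.

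The genuine gap is (i)$\Rightarrow$(iii). You correctly identify that the formal inversion $\lambda g=\sum_{k\ge 1}(-1)^{k+1}(e^{\lambda}-1)^k f^{\ast k}/k$ only makes sense for $\lambda<\log 2$, but you do not supply a mechanism to reduce to that regime. Your detour through the $n$-fold compound Poisson density $f_n$ goes the wrong way: $f_n$ has Poisson rate $n\lambda$, so the convergence problem is \emph{worse}, not better, and showing $f_n\sim\kappa_n f$ does not help you isolate $g$. The paper's key idea, which you are missing, is to split the \emph{L\'evy measure} rather than pass to convolution powers of $\mu$: choose $c_1$ with $\lambda\Lambda_1:=\lambda\overline G(c_1)<\log 2$ and factor $\mu=\mu_1\ast\mu_2$, where $\mu_1$ carries the L\'evy mass on $[c_1,\infty)$ and $\mu_2$ the rest. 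Lemma~\ref{lem:cp:lower} gives $\widetilde f_2(x)=o(e^{-\gamma x})=o(\widetilde f(x))$, so the compound-Poisson factorization (Corollary~\ref{prop:fact:cp}) yields $\widetilde f_1\in\cals$ with $\widetilde f_1(x)\sim\widetilde f(x)$. Now $f_1$ is a one-sided compound Poisson with rate $\lambda\Lambda_1<\log 2$, the signed series $\lambda\Lambda_1 g_1=-\sum_{n\ge 1}n^{-1}(1-e^{\lambda\Lambda_1})^n f_1^{\ast n}$ converges absolutely, and Kesten's bound plus dominated convergence give $g_1(x)\sim c\,f_1(x)$. Chaining the equivalences recovers $f(x)/g(x)\to\lambda/(1-e^{-\lambda})$. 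Without this L\'evy-splitting step (or an equivalent convolution-root argument for $\widetilde f^{\ast 1/n}$, which in turn would need something like Proposition~\ref{prop:convroot:gene}), your plan for (i)$\Rightarrow$(iii) remains a statement of the difficulty rather than a resolution of it.
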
 
Obviously $\cals_+$ of the theorem can be replaced with $\cals$. 

Different from the one-sided version (Theorem 1.1 of \cite{shimura:watanabe:2022}), where we do not 
need any monotonic-type assumptions, the al.d. condition is added in Theorem \ref{thm:compundpoi}.  
Recall that for a general density $h$, the al.d. condition 
is often supposed to derive $h\in \cals_+ \Rightarrow h \in\cals$. 
Therefore, the al.d. condition in Theorem \ref{thm:compundpoi} is quite natural. 
Moreover, without the al.d. condition 
one can make an example in the two-sided case such that the equivalence of subexponentiality does not hold. 
In Ex.\ref{ex:cp:nonsubexp}, we see that even $g\in \cals_+$ and $g$ is bounded, 
without the al.d. condition it is possible to have $f\notin \cals$.

However, for general $\mu\in {\rm ID}(\R)$ it seems difficult to extend 
the three equivalence relations under the al.d. condition, e.g. it is difficult to derive Lemma \ref{lem:sloc:s}.  
It remains to be seen that to what extent we could weaken the a.n.i. condition in the ${\rm ID}(\R)$ case. 
Different from the distribution, if the density is treated, there exist harder gaps between 
the compound Poisson case and the ${\rm ID}(\R)$ case.  

The proof of Theorem \ref{thm:compundpoi} is given in Subsection \ref{subsec:pr:maintheorem:section4}.
For the proof, we consider the two-sided extension of \cite{shimura:watanabe:2022}, 
and borrow and extend several useful tools in \cite{shimura:watanabe:2022}. The study on the equivalence of above type 
has been initiated in the distribution version \cite[Theorem 3]{Embrechts:Goldie:Veraverbeke:1979}.   
However, direct tracing of the idea in \cite[Theorem 3]{Embrechts:Goldie:Veraverbeke:1979} 
is quite difficult and we need to make new passes. Indeed, even in the positive-half density 
case of \cite{shimura:watanabe:2022}, several new tools have been invented to overcome the difficulty. 

The following auxiliary results specific to a generalized density with a delta function part
are useful, which are interesting and important in themselves.  
Proofs for the results needed for that of Theorem \ref{thm:compundpoi} 
are given before the proof of Theorem \ref{thm:compundpoi} (Subsection \ref{subsec:pr:needed:section4}).
The proofs for not directly related results (Proposition \ref{prop:convroot:gene}, 
Corollary \ref{cor:steutel}) are given in Appendix \ref{append:proofs}.

\begin{proposition}[Factrization]
\label{prop:fact:gene}
 Let $\wt h=\wt g\ast \wt f \in \cals$ such that $\wt g(x)=p_g \delta(x)+(1-p_g)g(x)$ with $p_g\in (2^{-1},1)$. 
Suppose that $\wt h$ is al.d. or $\wt f(x)=p_f \delta(x)+(1-p_f)f(x)$ with $p_f\in (0,1)$. Then, 
$\wt g(x)=o(\wt h(x))$ implies $\wt f\in \cals$, and indeed $\wt h(x)\sim \wt f(x)$. 
\end{proposition}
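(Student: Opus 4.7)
The plan is to establish the tail equivalence $\wt h(x)\sim \wt f(x)$ via a Neumann-type expansion exploiting the hypothesis $p_g>1/2$, and then to derive $\wt f\in\cals$ using Lemma \ref{lem:a3-2}. The starting point is the decomposition
\[
\wt h \;=\; \wt g * \wt f \;=\; p_g\,\wt f + (1-p_g)(g*\wt f),
\]
which gives the immediate upper bound $\wt f\le p_g^{-1}\wt h$ and the inversion $\wt f = p_g^{-1}\wt h - r\,(g*\wt f)$ with $r:=(1-p_g)/p_g<1$. The contraction $r<1$ is exactly the quantitative content of $p_g>1/2$, and is what will make the whole scheme work.

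Iterating the inversion $n$ times produces the partial expansion
\[
\wt f(x) \;=\; \frac{1}{p_g}\sum_{k=0}^{n-1}(-r)^k (g^{*k}*\wt h)(x) + (-r)^n (g^{*n}*\wt f)(x),
\]
with $g^{*0}=\delta$. Since $r<1$ and $(g^{*n}*\wt f)(x)$ is a bounded proper density, the remainder vanishes as $n\to\infty$, yielding $\wt f(x)=p_g^{-1}\sum_{k=0}^\infty(-r)^k(g^{*k}*\wt h)(x)$. The core step is to show $(g^{*k}*\wt h)(x)\sim\wt h(x)$ for each fixed $k$. Since $\wt h\in\cals\subset\call$, take an insensitivity function $\alpha(x)\to\infty$ for $\wt h$ via Lemma \ref{lem:a2-1}(i). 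Then
\[
\int_{|y|\le\alpha(x)} g^{*k}(y)\,\wt h(x-y)\,dy \;\sim\; \wt h(x)\int g^{*k} \;=\; \wt h(x),
\]
while the contribution from $|y|>\alpha(x)$ is $o(\wt h(x))$ by the insensitivity split of Lemma \ref{lem:a2-1}(v) applied to $g^{*k}$, together with the hypothesis $g(x)=o(\wt h(x))$. Dividing the Neumann expansion by $\wt h(x)$ and passing the limit inside the sum (justified by dominated convergence with a summable geometric majorant $Cr^k$ coming from a uniform-in-$k$ bound of $(g^{*k}*\wt h)(x)/\wt h(x)$), we obtain
\[
\lim_{x\to\infty}\frac{\wt f(x)}{\wt h(x)} \;=\; \frac{1}{p_g}\sum_{k=0}^\infty(-r)^k \;=\; \frac{1}{p_g(1+r)} \;=\; 1,
\]
because $p_g(1+r)=p_g+(1-p_g)=1$. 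Thus $\wt h\sim\wt f$, and in particular $\wt f\in\call$.

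Having established $\wt h\sim\wt f$ with $\wt h\in\cals$, Lemma \ref{lem:a3-2} gives $\wt f\in\cals$ once its side hypothesis is verified in each case of the proposition. If $\wt h$ is al.d., Lemma \ref{lem:asymp:eqive:ald} transfers this property to $\wt f$ through the tail equivalence just obtained. If $\wt f=p_f\delta+(1-p_f)f$ with $p_f\in(0,1)$, the explicit delta at $0$ controls $\wt f(-x)$ against $\wt h(-x)$, delivering the required bound $\wt h(-x)=O(\wt f(-x))$ on the negative tail directly. The principal technical obstacle is the uniform-in-$k$ bound $(g^{*k}*\wt h)(x)/\wt h(x)\le C$ that underwrites the termwise passage to the limit: this is exactly where the subexponentiality of $\wt h$ and the tail dominance $g(x)=o(\wt h(x))$ interact most delicately, and it is also where $p_g>1/2$ is indispensable, since without the contraction $r<1$ the Neumann series cannot be summed and the tail asymptotic cannot be pinned down to $c=1$ rather than some arbitrary $c\in[1,p_g^{-1}]$.
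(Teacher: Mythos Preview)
Your Neumann-series route is genuinely different from the paper's argument, but it has real gaps that would need substantial work to close.

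\textbf{Where the case hypothesis is actually needed.} You defer the two alternative hypotheses (al.d.\ of $\wt h$, or $\wt f=p_f\delta+(1-p_f)f$) to the very last step, invoking Lemma~\ref{lem:a3-2}. But the bound $\wt f\le p_g^{-1}\wt h$ already gives $\wt f(-x)=O(\wt h(-x))$, so Lemma~\ref{lem:a3-2} applies \emph{unconditionally} once $\wt h\sim\wt f$ is established. The case hypothesis is not needed there; it is needed earlier, to show $(g^{*k}*\wt h)(x)\sim\wt h(x)$. Specifically, the left-tail piece $\int_{-\infty}^{-\alpha(x)} g^{*k}(y)\,\wt h(x-y)\,dy$ cannot be controlled by Lemma~\ref{lem:a2-1}(v) alone: you have no bound of the form $g^{*k}(y)\le c\,\wt h(y)$ for $y<0$ from $g(x)=o(\wt h(x))$, which is a right-tail statement. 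In the al.d.\ case one uses $\wt h(x-y)\le K\wt h(x)$; in the delta case one uses $\wt h\ge (1-p_g)p_f\,g$ everywhere. Without one of these, the asymptotic $(g*\wt h)(x)\sim\wt h(x)$ is not available.

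\textbf{The uniform bound and series convergence.} You correctly flag the uniform-in-$k$ bound $(g^{*k}*\wt h)(x)/\wt h(x)\le C$ as the crux, but you do not prove it, and it is not a consequence of the termwise asymptotics (which only give $x$-thresholds depending on $k$). A Kesten-type argument would be needed, and the available Kesten bound (Lemma~\ref{lem:kestenbound}) is for self-convolutions of a single $\cals_+$ density, not for $g^{*k}*\wt h$ with $g$ arbitrary. Separately, your claim that the remainder $(-r)^n(g^{*n}*\wt f)(x)\to 0$ because ``$(g^{*n}*\wt f)$ is a bounded proper density'' is unjustified: no boundedness of $\wt f$ or $\wt h$ is assumed.

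\textbf{What the paper does instead.} The paper avoids the series entirely. Setting $\ov C=\limsup \wt f/\wt h$ and $\underline C=\liminf \wt f/\wt h$ (finite since $\wt f\le p_g^{-1}\wt h$), it shows via one insensitivity split that $\underline C\le \liminf (g*\wt f)/\wt h$ and $\limsup (g*\wt f)/\wt h\le \ov C$; the case hypothesis enters precisely to kill the $(-\infty,-\alpha(x))$ piece of this single convolution. Taking $\limsup$ and $\liminf$ in $1=p_g\,\wt f/\wt h+(1-p_g)\,g*\wt f/\wt h$ then gives $p_g\ov C+(1-p_g)\underline C\le 1\le p_g\underline C+(1-p_g)\ov C$, hence $(1-2p_g)(\ov C-\underline C)\ge 0$, forcing $\ov C=\underline C=1$ by $p_g>1/2$. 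This sidesteps all convergence and uniformity issues.
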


\begin{corollary}[Factrization, compound Poisson]
\label{prop:fact:cp}
 Let $\wt h=\wt g\ast \wt f \in \cals$ such that $\wt g$ is a compound Poisson.
Suppose that $\wt h$ is al.d. or $\wt f(x)=p \delta(x)+(1-p)f(x)$ with $p\in (0,1)$. 
Then $\wt g=o(\wt h(x))$ implies $\wt f\in \cals$ and indeed $\wt h(x)\sim \wt f(x)$. 
\end{corollary}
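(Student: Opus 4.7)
The plan is to reduce Corollary \ref{prop:fact:cp} to Proposition \ref{prop:fact:gene} by splitting the Poisson parameter of $\wt g$ into many small pieces. Let $\wt g$ be compound Poisson with parameter $\lambda>0$ and jump density $g$, and for $n\in\N$ let $\wt g_n$ denote the compound Poisson generalized density with parameter $\lambda/n$ and the same jump density. Matching characteristic functions gives $\wt g=\wt g_n^{\ast n}$, and $\wt g_n(x)=p_n\delta(x)+(1-p_n)g_n^{*}(x)$ with $p_n=e^{-\lambda/n}$. I would choose $n$ large enough that $p_n>1/2$, which puts $\wt g_n$ into exactly the form required by Proposition \ref{prop:fact:gene}. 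This is the key trick, since the atom of $\wt g$ itself is $e^{-\lambda}$ and may be arbitrarily small, so Proposition \ref{prop:fact:gene} cannot be applied directly to $\wt g$.

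Next, I would set $\wt h_k=\wt g_n^{\ast k}\ast \wt f$ for $k=0,\ldots,n$, so that $\wt h_0=\wt f$ and $\wt h_n=\wt h$, and prove by downward induction on $k$ that $\wt h_k\in\cals$ and $\wt h_k(x)\sim \wt h(x)$. The base case $k=n$ is immediate. For the inductive step $k+1\to k$, I would apply Proposition \ref{prop:fact:gene} to the identity $\wt h_{k+1}=\wt g_n\ast \wt h_k$. To verify the hypothesis $\wt g_n=o(\wt h_{k+1})$, I would use the factorization $\wt g=\wt g_n\ast \wt g_n^{\ast(n-1)}$ and restrict to the delta atom of $\wt g_n^{\ast(n-1)}$ to obtain the pointwise lower bound
\[
\wt g(x)\ge e^{-\lambda(n-1)/n}\wt g_n(x),
\]
which yields $\wt g_n=O(\wt g)=o(\wt h)=o(\wt h_{k+1})$ via the inductive equivalence $\wt h_{k+1}\sim \wt h$.

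For the side condition of Proposition \ref{prop:fact:gene}, I split into the two cases already present in the hypothesis of the corollary. If $\wt h$ is al.d., then $\wt h_{k+1}\sim \wt h$ together with Lemma \ref{lem:asymp:eqive:ald} show that $\wt h_{k+1}$ is al.d. If instead $\wt f(x)=p\delta(x)+(1-p)f(x)$ with $p\in(0,1)$, then a direct expansion of the convolution $\wt g_n^{\ast k}\ast \wt f$ gives $\wt h_k(x)=q_k\delta(x)+(1-q_k)f_k^{*}(x)$ with $q_k=pe^{-k\lambda/n}\in(0,1)$, so $\wt h_k$ also has the form required by Proposition \ref{prop:fact:gene}. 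Each application of the proposition then produces $\wt h_k\in\cals$ with $\wt h_k\sim \wt h_{k+1}\sim \wt h$, closing the induction; at $k=0$ this yields $\wt f=\wt h_0\in\cals$ and $\wt f(x)\sim \wt h(x)$. The main obstacle I expect is a bookkeeping one: verifying at every intermediate $k$ that the alternative side condition of Proposition \ref{prop:fact:gene} persists, which is precisely where the delta atom of $\wt g_n^{\ast k}$ at zero becomes essential.
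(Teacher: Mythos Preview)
Your proposal is correct and follows essentially the same route as the paper's proof: split the Poisson parameter so that $e^{-\lambda/n}>1/2$, use the pointwise lower bound coming from the delta atom of $\wt g_n^{\ast(n-1)}$ to transfer $\wt g=o(\wt h)$ to $\wt g_n=o(\wt h)$, and then apply Proposition~\ref{prop:fact:gene} iteratively, propagating either the al.d.\ property via Lemma~\ref{lem:asymp:eqive:ald} or the delta-atom structure of $\wt h_k$. Your downward induction and the paper's ``iterate this step'' are the same argument.
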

\begin{proposition}[Convolution root]
\label{prop:convroot:gene}
Let $\wt f(x)=p \delta(x)+(1-p)f(x)$. 
If $p \in (2^{-1/(n-1)},1)$
then $\wt f^{\ast n} \in \cals$ implies $\wt f\in \cals$. 
\end{proposition}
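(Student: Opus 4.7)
The plan is to establish the tail equivalence $\wt f^{\ast n}(x) \sim c\,\wt f(x)$ for some constant $c\in(0,\infty)$ and then invoke Lemma \ref{lem:a3-2} to transfer subexponentiality from $\wt f^{\ast n}$ to $\wt f$. The hypothesis $p \in (2^{-1/(n-1)},1)$ is equivalent to $p^{n-1}>1/2$, which encodes the atomic-mass dominance of $\wt f^{\ast(n-1)}$ exploited throughout the argument.

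First, using the binomial expansion, write $\wt f^{\ast(n-1)} = p^{n-1}\delta + (1-p^{n-1})h_{n-1}$, where $h_{n-1}$ is a proper probability density, so that the factorization $\wt f^{\ast n} = \wt f^{\ast(n-1)} \ast \wt f$ yields the central identity
\[
\wt f^{\ast n}(x) \;=\; p^{n-1}\,\wt f(x) + (1-p^{n-1})(h_{n-1}\ast \wt f)(x).
\]
The first term $p^{n-1}\wt f$ carries more than half of $\wt f^{\ast n}$ pointwise; combined with $\wt f^{\ast n}\in\cals\subset\call$ this should give $\wt f\in\call$. Then Lemma \ref{lem:a2-1}(iii) yields $\liminf(h_{n-1}\ast \wt f)(x)/\wt f(x)\ge 1$, and a matching upper bound via Kesten's bound (Lemma \ref{lem:kestenbound}), applied to the finite linear combination expressing $h_{n-1}\ast\wt f$ in terms of $f^{\ast k}$, would produce $\wt f^{\ast n}(x)\asymp \wt f(x)$. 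The long-tailedness and subexponentiality of $\wt f^{\ast n}$ should then upgrade this to a genuine limit $\wt f^{\ast n}(x)/\wt f(x)\to c\in(0,\infty)$.

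To close the argument via Lemma \ref{lem:a3-2}, one verifies the side condition $\wt f(-x)=O(\wt f^{\ast n}(-x))$ as $x\to\infty$. This is immediate from isolating the $k=1$ term in the binomial expansion of $\wt f^{\ast n}$:
\[
\wt f^{\ast n}(y) \;\geq\; n\,p^{n-1}(1-p)\,f(y) \;=\; n\,p^{n-1}\,\wt f(y)\quad\text{for every } y\neq 0,
\]
so that $\wt f(y)/\wt f^{\ast n}(y) \leq (n\,p^{n-1})^{-1}$. Applying Lemma \ref{lem:a3-2} to $\wt f^{\ast n}\in\cals$ with the equivalence $\wt f^{\ast n}\sim c\,\wt f$ and this side bound then delivers $\wt f\in\cals$.

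The main obstacle is upgrading $\wt f^{\ast n}(x)\asymp \wt f(x)$ to a genuine asymptotic equivalence $\wt f^{\ast n}(x)/\wt f(x)\to c$, i.e., ruling out oscillation of the ratio. The atomic dominance $p^{n-1}>1/2$ is decisive here: since $p^{n-1}\wt f(x)$ accounts for more than half of $\wt f^{\ast n}(x)$ at each point, long-tailedness of $\wt f^{\ast n}$ should force $(h_{n-1}\ast\wt f)(x)$ to be asymptotically proportional to $\wt f(x)$. Formalizing this step will likely require a bootstrap or contradiction argument combined with Lemma \ref{lem:a2-1}(v) to control the contribution from the intermediate convolution regions.
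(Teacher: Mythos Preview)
Your outline has the right endpoint (tail equivalence plus Lemma~\ref{lem:a3-2} with the side bound $\wt f\le (np^{n-1})^{-1}\wt f^{\ast n}$), but two of the intermediate steps do not go through as stated.

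First, the appeal to Kesten's bound (Lemma~\ref{lem:kestenbound}) is circular: that lemma requires $f\in\cals_+$ as a hypothesis, which is exactly what you are trying to prove. You therefore cannot use it to bound $f^{\ast k}(x)/f(x)$ from above. Second, you correctly identify the oscillation problem as the main obstacle, but you do not resolve it; the sentence ``long-tailedness of $\wt f^{\ast n}$ should force $(h_{n-1}\ast\wt f)(x)$ to be asymptotically proportional to $\wt f(x)$'' is exactly the nontrivial part, and the proposal gives no mechanism for it. Note also that Lemma~\ref{lem:a2-1}(iii) requires $\wt f\in\call$ as an input, which you have not established either.

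The paper's proof avoids both problems by never looking at ratios with $f$ in the denominator. Instead it sets $\underline C=\liminf f/\wt f^{\ast n}$ and $\ov C=\limsup f/\wt f^{\ast n}$ (both finite since $\wt f^{\ast n}\ge np^{n-1}(1-p)f$), and proves \emph{by induction on $k$} that
\[
\liminf_{x\to\infty}\frac{f^{\ast k}(x)}{\wt f^{\ast n}(x)}\ge k\,\underline C,
\qquad
\limsup_{x\to\infty}\frac{f^{\ast k}(x)}{\wt f^{\ast n}(x)}\le k\,\ov C.
\]
The induction step decomposes $f^{\ast k}=f\ast f^{\ast(k-1)}$ into five integrals over $(-\infty,-\alpha(x)]$, $[-\alpha(x),\alpha(x)]$, $[\alpha(x),x-\alpha(x)]$, $[x-\alpha(x),x+\alpha(x)]$, $[x+\alpha(x),\infty)$. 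The two central pieces give $\underline C$ and $(k-1)\underline C$ via Fatou and the induction hypothesis; the remaining three are $o(1)$ because $f^{\ast j}\le c\,\wt f^{\ast n}$ for every $j\le n$, so each integrand is dominated by $\wt f^{\ast n}(x-y)\wt f^{\ast n}(y)/\wt f^{\ast n}(x)$, which is handled by Lemma~\ref{lem:a2-1}(v) using only $\wt f^{\ast n}\in\cals$. Plugging these bounds into the binomial expansion of $\wt f^{\ast n}$ (isolating the $k=1$ term) yields
\[
n(1-p)\bigl\{p^{n-1}\ov C+(1-p^{n-1})\underline C\bigr\}\le 1\le n(1-p)\bigl\{p^{n-1}\underline C+(1-p^{n-1})\ov C\bigr\},
\]
hence $0\le n(1-p)(1-2p^{n-1})(\ov C-\underline C)$, and $p^{n-1}>1/2$ forces $\ov C=\underline C$. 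This is the ``bootstrap'' you were looking for: the key is to work with $\wt f^{\ast n}$ in the denominator throughout, so that only the subexponentiality of $\wt f^{\ast n}$ is ever used.
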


We make a remark about ``Steutel conjecture'' for a compound Poisson distribution \cite[p.340]{Embrechts:Goldie:Veraverbeke:1979}. 
For the generalized density $\wt f$ of $\mu$ with ch.f. $\wh \mu(z)$, we denote by $\wt f^{\ast \alpha}$ the generalized 
density given by ch.f. $\wh \mu(z)^\alpha$ for any positive real number $\alpha$.  
If $\wt f$ is a compound Poisson with L\'evy density $\lambda g(x)$ with $g$ a density, then so is $\wt f^{\ast \alpha}$
with L\'evy density $\lambda \alpha g(x)$. Due to Theorem \ref{thm:compundpoi} when $g$ or $f$ is bounded, 
then $\wt f\in \cals$ together with the al.d. property implies $\wt f^{\ast \alpha}\in \cals$ for all $\alpha>0$. By applying Proposition \ref{prop:convroot:gene} 
without assuming such conditions for $g$, we have the following.
\begin{corollary}[Steutel cojecture, compound Poisson]
\label{cor:steutel}
 If $\wt f\in \cals$ is the generalized density of a compound Poisson with parameter $\lambda<\log 2$. 
Then $\wt f^{\ast \alpha} \in \cals$ for every rational $\alpha>0$ and $\wt f^{\ast \alpha}(x)/\wt f(x)\to \alpha$ as $x\to\infty$. 
\end{corollary}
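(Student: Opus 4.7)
The plan is to apply Proposition \ref{prop:convroot:gene} combined with a short induction on convolution powers, exploiting the compound Poisson structure. Write the rational exponent as $\alpha = m/n$ with $m, n \in \N$.

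First I would extract the $n$th root of $\wt f$: set $\wt g_n := \wt f^{\ast 1/n}$, which is a compound Poisson with parameter $\lambda/n$ and delta mass $p_n = e^{-\lambda/n}$ at $0$. A direct algebraic check shows $p_n > 2^{-1/(n-1)}$ for every $n \ge 2$ under $\lambda < \log 2$: the inequality rearranges to $\lambda < n\log 2/(n-1)$, and the right side strictly exceeds $\log 2$ for every such $n$ (the bound $\log 2$ is tight as $n\to\infty$). Since $\wt g_n^{\ast n} = \wt f \in \cals$, Proposition \ref{prop:convroot:gene} yields $\wt g_n \in \cals$. I would then show inductively that $\wt g_n^{\ast k} \in \cals$ and $\wt g_n^{\ast k}(x) \sim k\, \wt g_n(x)$ as $x \to \infty$ for every $k \ge 1$: splitting $\wt g_n^{\ast(k+1)} = \wt g_n^{\ast k} \ast \wt g_n$ at an insensitivity function of $\wt g_n$ (Lemma \ref{lem:a2-1}(i),(v)) and combining $\wt g_n \in \cals$ with the inductive asymptotic yields $\wt g_n^{\ast(k+1)}(x) \sim (k+1)\, \wt g_n(x)$; Lemma \ref{lem:a3-2} then upgrades this to $\wt g_n^{\ast(k+1)} \in \cals$, the auxiliary hypothesis being met because the compound Poisson structure gives a linear-in-parameter left tail, so that $\wt g_n^{\ast(k+1)}(-x) = O(\wt g_n(-x))$.

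Specialising $k = m$ gives $\wt f^{\ast \alpha} = \wt g_n^{\ast m} \in \cals$ with $\wt f^{\ast \alpha}(x) \sim m\, \wt g_n(x)$; specialising $k = n$ gives $\wt f = \wt g_n^{\ast n} \sim n\, \wt g_n$, hence $\wt g_n(x) \sim (1/n)\, \wt f(x)$. Combining, $\wt f^{\ast \alpha}(x) \sim (m/n)\, \wt f(x) = \alpha\, \wt f(x)$, as claimed.

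The main obstacle is the inductive step, since the density class $\cals$ is not closed under general convolutions (as shown by Kl\"uppelberg and Villasenor). What makes the argument work is the delta mass at $0$ of $\wt g_n^{\ast k}$: the hypothesis $\lambda < \log 2$ is calibrated precisely so that, uniformly in $n \ge 2$, every root $\wt f^{\ast 1/n}$ carries mass at $0$ above the critical threshold $2^{-1/(n-1)}$ demanded by Proposition \ref{prop:convroot:gene}, and this structural bound together with the linear scaling of the left tail lets Lemma \ref{lem:a3-2} close the induction at each stage.
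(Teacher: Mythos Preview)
Your overall route matches the paper's: apply Proposition~\ref{prop:convroot:gene} to the $n$th root $\wt g_n=\wt f^{\ast 1/n}$ (checking $e^{-\lambda/n}>2^{-1/(n-1)}$, which is exactly the paper's verification), conclude $\wt g_n\in\cals$ with $\wt g_n(x)\sim n^{-1}\wt f(x)$, and then pass to general rationals by taking $m$th powers. The paper's own proof is extremely terse on this last passage, simply asserting ``This implies that the result holds for any rational $\alpha>0$''; your attempt to make it explicit via induction on $k$ is the natural thing to try.

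However, the mechanism you propose for closing the induction fails. The claim that ``the compound Poisson structure gives a linear-in-parameter left tail, so that $\wt g_n^{\ast(k+1)}(-x)=O(\wt g_n(-x))$'' is false in general: the pointwise inequality goes the \emph{other} way, since for $x\neq 0$ one has $\wt g_n^{\ast(k+1)}(x)\ge (k+1)p_h^{k}\,\wt g_n(x)$ from the binomial expansion in the proper density $h$ of $\wt g_n$. For a concrete failure of the upper bound, take the jump density $g$ with negative part supported on a bounded interval, say $[-2,-1]$. Then $g^{\ast j}(-x)=0$ unless $j\gtrsim x/2$, so for large $x$ only high-order terms contribute to either compound Poisson expansion at $-x$, and the ratio of the $j$th coefficients is $e^{-k\lambda/n}(k+1)^j$; hence $\wt g_n^{\ast(k+1)}(-x)/\wt g_n(-x)\to\infty$. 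So Lemma~\ref{lem:a3-2} cannot be invoked via its left-tail alternative. The same left-tail issue already bites in your asymptotic step: among the five pieces of the split convolution, the piece $\int_{-\infty}^{-\alpha(x)}\wt g_n(x-y)\wt g_n^{\ast k}(y)\,dy$ needs control of $\wt g_n^{\ast k}$ on the far left, which the tools you list ($\wt g_n\in\cals$, the right-tail asymptotic, Lemma~\ref{lem:a2-1}(v) for $\wt g_n$ alone) do not supply. One can rescue that particular piece by invoking the \emph{full} inductive hypothesis $\wt g_n^{\ast k}\in\cals$ and applying Lemma~\ref{lem:a2-1}(v) to $\wt g_n^{\ast k}$, but this only pushes the problem forward: to obtain $\wt g_n^{\ast(k+1)}\in\cals$ you then need the asymptotic at level $2(k+1)$, which lies beyond the current inductive range. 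The induction as structured does not close.
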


We close this section an interesting example. 

\begin{example}
\label{ex:cp:nonsubexp}
 We construct a two-sided compound Poisson $\mu=\mu_-\ast \mu_+$ whose absolutely continuous part of $f$ is 
not subexponential, but the corresponding $\overline f$ of positive-half compound Poisson $\mu_+$ is subexponential 
such that $\overline f$ is not al.d. Here $\mu_-$ is a negative-half compound Poisson.

Suppose that the L\'evy measure density $g_+(x)$ of $\mu_+$ is given by the density of a semistable distribution. 
For $1<x_0 < b$ and $0<2\delta <(x_0-1)\wedge (b-x_0)$, let $\alpha(x)$ be a continuous periodic function 
on $\R$ with period $\log b$ such that $\alpha(\log x)>0$ for $x\in[1,x_0)\cup (x_0,b]$ and 
\begin{align*}
 \alpha(\log x):= \begin{cases}
		 \  0 & \text{for}\ x=x_0\\ 
 \frac{-1}{\log |x-x_0|} & \text{for}\ |x-x_0|<2\delta .
		  \end{cases}
\end{align*}
The L\'evy measure of the semistable distribution is given by $\nu(dx)=x^{-\gamma-1}\alpha(\log x) {\bf 1}_{\{x>0\}} dx$ with $\gamma\in (0,1)$
$($see  \cite[Section 3]{Watanabe:Yamamuro:2017} and \cite[Remarik (iii)]{Watanabe:2020}$)$. 
By the periodic property of the semistable, we have uniformly in $v\in [\delta,2\delta]$ 
\begin{align*}
 \nu\big((b^n(x_0+v),b^n(x_0+v)+1]\big) &\sim  b^{-(\gamma+1)n}(x_0+v)^{-(\gamma+1)} \alpha(\log (x_0+v)),\\
 \nu\big((b^n x_0,b^nx_0+1]\big) & \sim  b^{-(\gamma+1)n}x_0^{-(\gamma+1)}(n\log b)^{-1}. 
\end{align*}
Since $g_+$ is the density of the semistable, Theorem 2 of \cite{Watanabe:2020} 
yields  
\[
 \liminf_{n\to\infty} \frac{g_+(b^n(x_0+v))}{n g_+(b^nx_0)} = \liminf_{n\to\infty} 
\frac{\nu\big((b^n(x_0+v),b^n(x_0+v)+1]\big)}{n\nu\big((b^n x_0,b^nx_0+1]\big)} \ge c_0,
\]
where $c_0$ does not depend on $v\in [\delta,2\delta]$, so that $g_+$ is not al.d., while $g_+ \in \cals_+$ $($see \cite[Remarik (iii)]{Watanabe:2020}$)$.  
Since $\int_0^\infty g_+(x)^2 dx <\infty$, by \cite[Theorem 1.1]{shimura:watanabe:2022} $\overline f (x)\sim c g_+(x)$ and $\overline f\in \cals_+$.

We specify the L\'evy measure $\nu(dx)=g_-(x)dx$ of $\mu_-$. Let $\{n_k\}_{k=1}^\infty,\,n_k\in \N$ be an increasing sequence satisfying 
$\sum_{k=1}^\infty n_k^{-1/2}=1$ such that $n_k\to \infty$ as $k\to\infty$. We set 
$g_-$ as a density on $(-\infty,0)$ such that 
\[
 g_-(x)=\sum_{k=1}^\infty {\bf 1}_{\{ x\in (-2 b^{n_k} \delta,\,-b^{n_k}\delta]\}} b^{-n_k}\delta^{-1} n^{-1/2}_k. 
\]
The absolutely continuous part is given by $\underline{f}(x)= (e-1)^{-1} \sum_{n=1}^\infty g^{\ast n}_-(x)/n! \ge (e-1)^{-1}g_-(x)$. 

Now we consider 
\[
 (1-e^{-2})f(x)=(1-e^{-1})e^{-1}(\underline{f}(x)+\overline{f}(x))+(1-e^{-1})^2 \underline{f}\ast \overline{f}(x)
\]
and see $f \notin \cals$. 
Since $f\in \cals$ $($so that $\wt f\in \cals$$)$ implies $(1-e^{-2})f(x)\sim (1-e^{-1})\overline{f}(x)$ 
by Corollary \ref{prop:fact:cp}, it suffices to show that this does not hold. 
Observe that 
\begin{align*}
 \frac{f(b^n x_0)}{\overline{f}(b^n x_0)} &= \int_{-\infty}^0 \frac{\overline{f}(b^n x_0-y)}{\overline{f}(b^n x_0)} \underline{f}(y) dy \\
 & \ge \int_{-2b^n\delta}^{-b^n \delta} \frac{\overline{f}(b^n x_0-y)}{\overline{f}(b^n x_0)} \underline{f}(y)dy \\
 & \ge c \inf_{v\in [\delta,2\delta]} \frac{\overline{f}(b^n(x_0+v))}{\overline{f}(b^n x_0)} \int_{-2b^n \delta}^{-b^n \delta} \underline{g}(y)dy. 
\end{align*}
Thus, replacing $n$ with $n_k$, we obtain  
\[
 \liminf_{k\to \infty} \frac{f(b^{n_k}x_0)}{\overline{f}(b^{n_k}x_0)} \ge \liminf_{k\to\infty} cn_k (e-1)^{-1}n_k^{-1/2} =\infty,
\]
which implies $(1-e^{-2})f(x)\nsim (1-e^{-1})\overline{f}(x)$. 

As a byproduct, the scaled L\'evy density $g(x)=2^{-1}(g_+(x)+g_-(x))$ of $\mu$ gives an example of 
density such that $g\in \cals_+$ but $g\notin \cals$. We see this by observing behavior of 
\[
 \frac{g^{\ast 2}(x)}{g(x)} \ge \int_{-\infty}^0 \frac{g_+(x-y)g_-(y)}{g_+(x)}dy,\quad x>0. 
\]
Recall that $\overline{f}(x)\sim cg_+(x)$. Then, similarly as before
putting $x=b^{n_k} x_0$ and letting $k\to\infty$, it follows that 
$\liminf_{k\to\infty}g^{\ast 2}(b^{n_k} x_0)/g(b^{n_k} x_0)=\infty$.
\end{example}

\section{Application in the asymptotic theory of statistics}
\label{sec:application:statistics}
We apply our results to the consistency proof of the maximum likelihood estimation (MLE for short) for 
$\mu \in \id$ which is absolutely continuous. 
MLE is the most important estimation in statistics and stands as 
the benchmark for other estimation methods. For simplicity we put $a=b=0$ in $\wh \mu(z)$ of \eqref{def:chf:idr} and assume that 
the spectrally positive part 
$\wh \mu_+(z)$ of \eqref{def:id:spectrally:positive} is absolutely integrable.

Let $f(x;\theta)$ be the density of $\mu$ with $\theta$ a parameter vector 
and $g(x;\theta)$ be a density of the corresponding L\'evy measure $\nu$. 
Let $(X_1,\ldots,X_n)$ be a random sample from $f(x;\theta_0)$ with $\theta_0\in \Theta$ where $\Theta$ is a compact parameter space. 
Define the likelihood function 
\[
 M_n(\theta) = n^{-1} \sum_{i=1}^n \log f(X_i;\theta).
\]
MLE $\hat \theta_n$ maximizes the function $\theta \mapsto M_n(\theta)$. We say that a function $\alpha(x;\theta)$ is 
identifiable if $\alpha(\cdot\, ;\theta) \neq \alpha(\cdot\, ;\theta')$ every $\theta \neq \theta'\in \Theta$, i.e. 
$\alpha(x ;\theta) \stackrel{a.e.}{=} \alpha(x ;\theta')$ does not hold. 
For convenience, we only consider the symmetric or positive-half case, but we can easily generalize the result in the 
non-symmetric two-sided case. We use the function $g_1$ defined in Theorem \ref{theorem:ID:abs}. 
\begin{proposition}
\label{prop:mle}
Let $\mu \in \id$ 
given by \eqref{def:chf:idr} with $a=b=0$ 
such that $\wh \mu_+ (z)$ is absolutely integrable.
Let $g(x;\theta)$ be a symmetric or 
positive-half density of $\nu$. 
Suppose $(\mathrm{i}):$ 
$g(x;\theta)$ is identifiable, 
$\theta \mapsto g(x;\theta)$ is continuous 
in $\theta$ for every $x$, and $\int (\sup_{\theta \in \Theta} |\log g_1(x;\theta)|) g_1(x;\theta_0)dx <\infty$
with $\Theta$ a compact set such that 
$\theta_0 \in \Theta$. Suppose $(\mathrm{ii}):$ $g_1(x;\theta)$ is bounded and a.n.i., and $g_1\in \cals$. 
Then MLE $\hat \theta_n$ satisfies $\hat \theta_n \stackrel{p}{\to} \theta_0$. 
\end{proposition}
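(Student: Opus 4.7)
The plan is to invoke the classical Wald consistency theorem for M-estimators, which asks for four ingredients: existence and continuity of $\theta \mapsto f(x;\theta)$, identifiability of the parametric family $\{f(\cdot;\theta)\}_{\theta \in \Theta}$, a dominating $\P_{\theta_0}$-integrable envelope for $\sup_{\theta \in \Theta} |\log f(x;\theta)|$, and the argmax continuous-mapping step. The crux is translating the hypotheses, which are phrased in terms of the \levy\ density $g$ (or its tail restriction $g_1$), into statements about $f$; this transfer is where Theorem \ref{theorem:ID:abs} enters.

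For existence and continuity of $f$, absolute integrability of $\wh \mu_+$ (which equals $\wh \mu$ in the symmetric or positive-half case up to factors of modulus one) yields a bounded continuous density via Fourier inversion. Continuity of $\theta \mapsto g(x;\theta)$ propagates first to $\theta \mapsto \wh \mu(z;\theta)$ through the \levy-Khintchine exponent by dominated convergence, and then to $\theta \mapsto f(x;\theta)$ by a second dominated convergence in $z$, with an $L^1(dz)$ envelope supplied by continuity of $\theta \mapsto \|\wh \mu(\cdot;\theta)\|_{L^1}$ together with compactness of $\Theta$. Identifiability of $f$ follows from that of $g$: equality of densities forces equality of characteristic functions, hence of \levy\ measures by uniqueness in the \levy-Khintchine representation (recall $a=b=0$), hence equality of the densities $g$, hence $\theta = \theta'$ by the assumed identifiability of $g$.

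The heart of the argument is the construction of the envelope. I would split $\R = [-M,M] \cup \{|x| > M\}$ for large $M$. On the bulk, continuity of $(x,\theta) \mapsto f(x;\theta)$, strict positivity of $f$ (which holds since $\mu \in \id$ is absolutely continuous), and compactness of $[-M,M] \times \Theta$ yield constants $0 < c \le f(x;\theta) \le C < \infty$, so $|\log f(x;\theta)|$ is uniformly bounded there. On the tail, hypothesis (ii) of Proposition \ref{prop:mle} matches the premises of part $(a)$ of Theorem \ref{theorem:ID:abs}, hence the tail equivalence
\[
  f(x;\theta)/g_1(x;\theta) \;\to\; \nu((1,\infty);\theta) \quad \text{as } x\to \infty
\]
holds for each $\theta \in \Theta$. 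This reduces $|\log f(x;\theta)|$ to $|\log g_1(x;\theta)|$ plus a bounded-in-$x$ error, and the assumption $\int \sup_{\theta \in \Theta} |\log g_1(x;\theta)|\, g_1(x;\theta_0)\,dx < \infty$ together with the same tail equivalence at $\theta_0$ (which makes $f(x;\theta_0)\,dx$ and $g_1(x;\theta_0)\,dx$ comparable on the tail) delivers the required $\P_{\theta_0}$-integrable envelope $H(x) \ge \sup_{\theta \in \Theta}|\log f(x;\theta)|$.

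With the envelope in hand the uniform law of large numbers gives $\sup_{\theta \in \Theta}|M_n(\theta) - M(\theta)| \cip 0$, where $M(\theta) = E_{\theta_0}[\log f(X_1;\theta)]$. Jensen's inequality shows $M(\theta) \le M(\theta_0)$ with equality iff $f(\cdot;\theta) = f(\cdot;\theta_0)$, and identifiability pins the maximiser to $\theta_0$; the argmax continuous-mapping theorem then gives $\hat \theta_n \cip \theta_0$. The main obstacle is upgrading the pointwise-in-$\theta$ tail asymptotic of Theorem \ref{theorem:ID:abs} to a form uniform enough in $\theta$ to support a \emph{single} dominating envelope; this will require tracking the dependence of the error terms in the proof of that theorem on $\theta$ and combining this with continuity in $\theta$ and compactness of $\Theta$ to obtain the needed uniformity.
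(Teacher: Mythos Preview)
Your proposal is correct and follows essentially the same route as the paper: both verify the M-estimator consistency conditions (van der Vaart, Theorem 5.7), transfer identifiability from $g$ to $f$ via uniqueness of the \levy--Khintchine representation, obtain continuity of $\theta\mapsto f(x;\theta)$ through the inversion formula and dominated convergence, and build the envelope by invoking Theorem \ref{theorem:ID:abs} to replace $f$ by $g_1$ on the tail. You correctly identify the one genuine difficulty---upgrading the pointwise tail equivalence $f(x;\theta)\sim \nu((1,\infty);\theta)\,g_1(x;\theta)$ to a statement uniform over $\theta\in\Theta$---and the paper addresses exactly this via a compactness argument showing $\sup_{\theta\in\Theta} x_\theta<\infty$ for the threshold $x_\theta$ beyond which $(1-\varepsilon)g\le f$.
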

The condition  $(\mathrm{i})$ comes from those required for consistency of MLE, while the condition 
 $(\mathrm{ii})$ guarantees $g_1(x)\sim c f(x)$ through Theorem \ref{theorem:ID:abs}. 

\begin{remark}
The proof is done only with the L\'evy density $g$ and the corresponding ch.f., 
and we do not 
touch the genuine density $f$. 
Generally the explicit expression for $f$ of $\mu \in \id$ is unavailable. Nevertheless,  
the estimation methods by $f$ such as MLE are computationally feasible through its ch.f. 
Moreover, there exist many ch.f. based estimators which are comparable with those by $f$ $($cf. \cite{feuerverger:mcDunnough:1981a,yu:2004}$)$.  
For the construction and the theoretical justification $($asymptotics$)$ of these estimators 
the properties of $f$, including the tail asymptotics, are inevitable. 
However, deriving the necessary properties of $f$ from ch.f. are often not so easy, and therefore 
such validity has been proved only for well studied $\mu \in \id$ such as stable laws $($e.g. \cite{dumouchel:1973,matsui:2020}$)$. 
By our results we could extend the scope of these estimators 
to a rather wide subclass of $\id$, which is crucial in applications.

As a next step we are trying to prove the asymptotic normality of MLE for $\mu \in \id$ which is absolutely continuous, 
though it would be much harder than consistency because derivatives of $f$ and $g$ w.r.t. $\theta$ are involved.
\end{remark}

\begin{proof}
 We check the condition of \cite[Theorem 5.7]{vandervaat:2000}. Let $M(\theta)=\E[\log f(X;\theta)]$. 
For consistency $\hat \theta_n \stackrel{p}{\to} \theta_0$, we need two conditions: the deterministic condition
\begin{align}
\label{deterministic:condi}
 \sup_{\theta:|\theta-\theta_0| \ge \vep} M(\theta)<M(\theta_0)
\end{align}
and the stochastic condition 
\begin{align}
\label{stochastic:condi}
 \sup_{\theta\in\Theta} |M_n(\theta)-M(\theta)| \stackrel{p}{\to} 0.
\end{align}
Since $\mu\in {\rm ID}(\R)$ is uniquely defined by the L\'evy measure, the identifiability of $g$ implies that of $f$. Thus \eqref{deterministic:condi}
follows from Lemma 5.35 of \cite{vandervaat:2000}. The condition \eqref{stochastic:condi} is implied by two conditions  \cite[p.46]{vandervaat:2000}: 
$\theta \mapsto \log f(x;\theta)$ are continuous for every $x$ and they are dominated by an integrable envelop function.

For the former condition we use the inversion formula 
and evaluate 
\[
|f(x;\theta)-f(x;\theta')|= \frac{1}{2\pi} 
\Big|\int_{-\infty}^\infty
e^{-izx} (\wh \mu(z;\theta)-\wh \mu(z;\theta'))dz 
\Big|. 
\]
Since $\wh \mu(z;\theta),\,\theta\in \Theta$ is absolutely integrable, we have a dominant integrable function for the integrand. 
Moreover for each $z\in \R$, we have 
\[
|\wh \mu(z;\theta)- \wh \mu(z;\theta')| = |\wh \mu(z;\theta')|\Big|
e^{\int_{-\infty}^\infty(e^{izy}-1-izy {\bf 1}_{\{|z|\le 1\}})(g(y;\theta)-g(y;\theta'))dy }-1 
\Big| \to 0
\] 
as $\theta' \to \theta$. Indeed, since for $z \in \R$
\begin{align*}
& \Big|
\int_{-\infty}^\infty(e^{iz y}-1-izy {\bf 1}_{\{|y|\le 1\}})(g(y;\theta)-g(y;\theta'))dy 
\Big|  \\
& \le \int_{|y|> 1} |e^{iz y}-1||g(y;\theta)-g(y;\theta')|dy + \int_{|y|\le 1} |e^{iz y}-1-izy||g(y;\theta)-g(y;\theta')|dy \\
& \le M+ cz^{2} \int_{ |y| \le 1}y^2|g(y;\theta)-g(y;\theta')| dy < \infty,\quad M>0,
\end{align*}
(cf. \cite[Eq.(8.9)]{sato:1999}) and the dominated convergence works. 
Moreover, in view of the proof $\theta \mapsto f(x;\theta)$ is continuous in $\theta$ uniformly over $x$, and since $\Theta$ is compact it 
is uniformly continuous regardless of values of $x$. 

Before checking the latter condition, we notice that the condition $(\mathrm{ii})$ and the absolute integrability of 
$\wh \mu_+(z)$ imply the condition of Theorem \ref{theorem:ID:abs}, so that  
$g(\pm x;\theta)\sim f(\pm x;\theta)$ follows. 
We proceed to the second condition. 

By symmetry we consider the case $x\ge 0$. 
For fixed $\vep \in(0,1)$ and any $\theta\in \Theta$, there exits $x_\theta\ge 0$ such that 
for all $x>x_\theta$ 
\begin{align}
\label{equive:gf}
 (1-\vep) g(x;\theta) \le f(x;\theta).
\end{align}
For each $\theta$ we take the infimum $x_\theta \ge 0$ without loss of generality. 
If \eqref{equive:gf} holds for all $x>0$ then we put $x_\theta=0$. 
We show that $\ov x =\sup_{\theta\in \Theta} x_\theta <\infty$. If $\ov x=\infty$ then we can choose an 
infinite sequence $(\theta_k,x_{\theta_k})$ such that $x_{\theta_k}\to \infty$ as $k\to\infty$ and 
for all $x>x_{\theta_k}$ \eqref{equive:gf} holds. 
Since $\Theta$ is compact, we can choose a subsequence $(\theta_i,x_{\theta_i})$ such that 
$\lim_{i\to\infty}\theta_i=\theta'\in \Theta$ but $x_{\theta_i}\to\infty$. However, by definition of $x_\theta$, 
this is impossible since $x_{\theta'} <\infty$. Similarly we may let $f(x;\theta)<1$ for all $x>\ov x$. 

Now for all $x> \ov x$ and all $\theta\in \Theta$ we have 
\begin{align*}
|\log f(x;\theta)| &= \log f(x;\theta)^{-1} \\
&\le \log (c_\vep g_1(x;\theta))^{-1} \\
&\le c+\sup_{\theta\in \Theta}|\log g_1(x;\theta)| 
\end{align*}
with $c_\vep>0$ a constant. 
Moreover, $f(x;\theta)$ is continuous in $x$. Thus $\E [\sup_{\theta \in \Theta} |\log f (X;\theta)|] <\infty$
 is implied by the last condition of $(\mathrm{i})$. Now \eqref{stochastic:condi} is proved. 
\end{proof}

\section{Proofs } 
\label{pf:main:ID}
Firstly we give the proofs for results needed for the main results (Theorem \ref{thm:compundpoi} and 
Theorem \ref{theorem:ID}). 
In Subsection \ref{subsec:pr:other:section3} the proofs for auxiliary results in Section \ref{sec:main:results} are given, while in 
Subsection \ref{subsec:pr:needed:section4} those in Section \ref{sec:cp} are given. 
Then we prove Theorem \ref{thm:compundpoi} (Subsection \ref{subsec:pr:maintheorem:section4}) and Theorem \ref{theorem:ID} (Subsection 
\ref{subsec:pr:maintheorem:section5}) in order. 
This is an understandable order.

Throughout this section $c$ denotes a positive constant whose value is not of interest.

\subsection{Proofs for auxiliary results in Section \ref{sec:main:results}}
\label{subsec:pr:other:section3}
\begin{proof}[Proof of {\rm Lemma \ref{lem:equiv:ani:function}}]
 Obviously the a.n.i. property implies \eqref{lem:ani:function}, and we show the converse. 
Observe that 
\[
 1 \le \frac{\sup_{t \ge x} f(t)}{f(x)} = \sup_{t \ge x}\frac{f(t)}{\alpha (x)} \frac{\alpha (x)}{f(x)} 
\le \sup_{t \ge x}\frac{f(t)}{ \alpha (t)} \cdot \frac{\alpha (x)}{f(x)}
\to 1
\]
as $x\to \infty$. Moreover, since $f$ is positive on $[x_0,\infty)$ for some $x_0>0$ and $f(x)\to 0$ as $x\to\infty$, there 
exists $y_x>x$ such that 
\[
 1 \ge \frac{\inf_{x_0\le t \le x} f(t)}{f(x)} \ge  \inf_{x \le t \le y_x}\frac{f(t)}{\alpha (x)} \frac{\alpha (x)}{f(x)} 
\ge \inf_{x \le t \le y_x}\frac{f(t)}{ \alpha (t)}\cdot \frac{\alpha (x)}{f(x)}.
\]
Letting $y_x\to \infty$ and then $x\to \infty$, we obtain the result. Finally it suffices to notice that 
$f\in\call$ implies $f(x)\to 0$ as $x\to\infty$ (see \cite[p.76]{Foss:Korshunov:Zachary:2013}). 
\end{proof}

\begin{proof}[Proof of Lemma \ref{lem:asymp:eqive:ald}]
For given $\vep>0$ there exists $x_0>0$ such that for $x>x_0$ 
\[
 (1-\vep)f(x) \le g(x) \le (1+\vep) f(x)
\]
and we may further take $x_1>x_0$ such that for $y>0$ and $x>x_1$, 
$f(x+y)\le K f(x)$ with some $K>0$. Then for all $y>0$ and $x>x_1$ we have  
\[
 g(x+y) \le (1+\vep) f(x+y) \le (1+\vep) K f(x) \le (1+\vep)/(1-\vep) K g(x). 
\] 
\end{proof}

\begin{proof}[Proof of {\rm Lemma \ref{lem:a3-2}}]
Obviously $\wt g \in \call$.
We take a non-decreasing function $0<\alpha(x)<x/2$ such that $\wt f$ and $\wt g$ are $\alpha$-insensitive (see Lemma \ref{lem:a2-1}). 
Write  
\[
 \frac{\wt g^{\ast 2}(x)}{\wt g(x)} = \Big(2 \int_{-\infty}^{-\alpha(x)}+2 \int_{-\alpha(x)}^{\alpha(x)} + \int_{\alpha(x)}^{x-\alpha(x)} \Big)\frac{\wt g(x-y) \wt g(y)}{\wt g(x)} dy.  
\]
By the dominated convergence 
the second term converges to $2$. If necessary, put $\wt g(x)=p\delta(x) + qg(x)$ and avoid the delta function when 
applying the dominated convergence.  
Moreover, due to \eqref{asmp:lem:a3-2}, the third term is bounded by 
\[
 c \int_{\alpha(x)}^{x-\alpha(x)} \frac{\wt f(x-y)\wt f(y)}{\wt f(x)}dy,
\] 
which is $o(1)$ according to Lemma \ref{lem:a2-1} $(\mathrm{v})$. 

We see that the first term is negligible. For the case $\wt g(-x)=O(\wt f(-x))$ as $x\to\infty$, \eqref{asmp:lem:a3-2}
 together with Lemma \ref{lem:a2-1} $(\mathrm{v})$ implies 
\[
 \int_{-\infty}^{-\alpha(x)} \frac{\wt g(x-y)\wt g(y)}{\wt g(x)} dy \le c \int_{-\infty}^{-\alpha(x)} \frac{\wt f(x-y)\wt f(y)}{\wt f(x)} dy \to 0
\quad {\rm as }\quad x\to\infty.  
\]
If $\wt g$ is al.d. (or $\wt f$ is al.d.), then with some $K>0$ the first term is bounded by 
\begin{align*}
&  \frac{\sup_{t \ge x+\alpha(x)} g(t)}{g(x)} \int_{-\infty}^{-\alpha(x)} \wt g(y)dy \quad \Big( \text{or}\quad  
c \frac{\sup_{t \ge x+\alpha(x)} f(t) }{f(x)} \int_{-\infty}^{-\alpha(x)} \wt f(y)dy \Big) \\
& 
\le K \int_{-\infty}^{-\alpha(x)} \wt g(y)dy \quad \Big(
c K \int_{-\infty}^{-\alpha(x)} \wt f(y)dy \Big)
\to 0, 
\end{align*}
as $x\to\infty$. 
Thus we prove the assertion. 
\end{proof}

\begin{proof}[Proof of {\rm Proposition \ref{prop:a3-3}}]
$(\mathrm{i})$ Owning to Lemma \ref{lem:a3-2}, it suffices to see 
\begin{align}
\label{eq:pf:prop:a3-3}
 \lim_{x\to\infty} \frac{f\ast \wt g(x)}{f(x)}=1. 
\end{align}
Take an insensitive function $\alpha(x)$ for $f$ such that $0<\alpha(x)<x/2$ and $x\to\infty$, and write 
\begin{align*}
 \frac{f\ast \wt g (x)}{f(x)}&= \Big(
\int_{-\infty}^{-\alpha(x)}+\int_{-\alpha(x)}^{\alpha(x)}+\int_{\alpha(x)}^{x-\alpha(x)} +\int_{x-\alpha(x)}^\infty \Big) \frac{f(x-y)}{f(x)}\wt g(y)dy \\
&=:I_1(x)+I_2(x)+I_3(x)+I_4(x). 
\end{align*}
Since $f\in \call$, $\lim_{x\to\infty}I_2(x)=1$ (If it is needed, write $\wt g(x)=p\delta(x)+q g(x)$ and apply the calculation rule for $\delta$). 
We consider the first condition. 
Since $f$ is al.d. and $f\in \call$,  
\[
 I_1(x) \le \frac{\sup_{t\ge x+\alpha(x)}f(t)}{f(x)} \int_{-\infty}^{-\alpha(x)} \wt g(y)dy \to 0\quad {\rm as }\ x\to\infty. 
\]
Moreover, since $\wt g(x)=o(f(x))$ and $f\in \call$, 
in view of $I_1$ and $I_2$ with $\wt g$ replaced by $f$, we have 
\[
 I_4(x) =\int_{-\infty}^{\alpha(x)} \frac{\wt g(x-y)}{f(x-y)}\frac{f(x-y)}{f(x)} f(y)dy \le 
o(1) 
\Big(
\int_{-\alpha(x)}^{\alpha(x)} +\int_{-\infty}^{-\alpha(x)}
\Big) \frac{f(x-y)f(y)}{f(x)}dy \to 0
\]
as $x\to \infty$. Next since $\wt g(x)=o(f(x))$ and $f\in \cals_+$ (cf. Lemma \ref{lem:a2-1} $(\mathrm{v})$)
\begin{align*}
 I_3(x) &= \int_{\alpha(x)}^{x-\alpha(x)} \frac{f(x-y)}{f(x)} \frac{\wt g(y)}{f(y)} f(y) dy \le c \int_{\alpha(x)}^{x-\alpha(x)} \frac{f_+(x-y)}{f_+(x)} f_+(y)dy \to 0 
\end{align*}
as $x\to\infty$. 
Thus we obtain \eqref{eq:pf:prop:a3-3}. Finally, for the second case, it suffices to show that 
$\lim_{x\to\infty}I_1(x)=0$. Since $\alpha(x)\to\infty$ and $\wt g(-x)=O(f(-x))$, 
\[
 I_1(x) \le c \int_{-\infty}^{-\alpha(x)} \frac{f(x-y)f(y)}{f(x)} dy \to 0
\]
follows from Lemma \ref{lem:a2-1} $(\mathrm{v})$. \\
$(\mathrm{ii})$ Let $z\in \R$. 
For sufficiently large $c>0$, 
\begin{align*}
 h(x+z) \ge \int_c^x \wt g(x+z-y)f(y)dy \ge \inf_{y\in[c,x]} f(y) \int_z^{x+z-c} \wt g(y)dy. 
\end{align*}
Then, since $f$ is a.n.i. 
\begin{align*}
\liminf_{x\to \infty} \frac{h(x)}{f(x)} = \lim_{z\to -\infty} \liminf_{x\to \infty} \frac{h(x)}{h(x+z)} \frac{h(x+z)}{\inf_{y\in [c,x]}f(y)} 
\frac{\inf_{y\in [c,x]}f(y)}{f(x)}
\ge \lim_{z\to-\infty} \int_z^\infty \wt g(y)dy =1, 
\end{align*}
which implies 
\begin{align}
 \limsup_{x\to \infty} f(x)/h(x) \le 1. \label{pf:convolution:limsup} 
\end{align}
We will prove 
\begin{align}
 \liminf_{x\to \infty} f(x)/h(x) \ge 1. \label{pf:convolution:liminf} 
\end{align}
Take an insensitive function $0<\alpha(x)<x/2$ for $h$ and write 
\begin{align*}
 1&= \Big(
\int_{-\infty}^{-\alpha(x)}+\int_{-\alpha(x)}^{\alpha(x)}+\int_{\alpha(x)}^{x-\alpha(x)} +\int_{x-\alpha(x)}^\infty \Big) \frac{f(x-y) \wt g(y)}{h(x)}dy \\
& =:I_1(x)+I_2(x)+I_3(x)+I_4(x). 
\end{align*}
Here we may take $\alpha(x)$ to be continuous (see \cite[p.20]{Foss:Korshunov:Zachary:2013}).  
Since $f$ is a.n.i. and $h\in \call$ 
\[
 I_2(x) \le \frac{h(x-\alpha(x))}{h(x)} \frac{f(x-\alpha(x))}{h(x-\alpha(x))}
\frac{\sup_{y \ge x-\alpha(x)}f(y)}{f(x-\alpha(x))}
\int_{-\alpha(x)}^{\alpha(x)} \wt g(y)dy. 
\]
The terms other than $f(x-\alpha(x))/h(x-\alpha(x))$ 
converge to $1$, and we have 
\[
 \liminf_{x\to\infty} I_2(x) \le \liminf_{x\to \infty} f(x)/h(x). 
\]
For $I_1$, we use the a.n.i. property of $f$ and \eqref{pf:convolution:limsup}, i.e. for sufficiently large $x>0$,  
\begin{align*}
 I_1(x) = \frac{f(x+\alpha(x))}{h(x)} \int_{-\infty}^{-\alpha(x)} \frac{f(x-y)}{f(x+\alpha(x))}\wt g(y)dy 
\le (1+\vep) \int_{-\infty}^{-\alpha(x)} \wt g(y)dy \to 0\quad {\rm as}\ x\to\infty.
\end{align*}
Next, we consider 
\[
 I_3(x)=\int_{\alpha(x)}^{x-\alpha(x)} \frac{f(x-y)}{h(x-y)}\frac{\wt g(y)}{h(y)} \frac{h(x-y)h(y)}{h(x)}dy.
\]
Since we have $\wt g(x)=o(h(x))$, \eqref{pf:convolution:limsup} and $h\in \cals_+$, 
\[
 I_3(x) \le c \int_{\alpha(x)}^{x-\alpha(x)} \frac{h_+(x-y)h_+(y)}{h_+(x)}dy \to 0\quad {\rm as }\ x\to\infty 
\]
holds by Lemma \ref{lem:a2-1} $(\mathrm{v})$. 
We study $I_4$. If $h\in \call$ is a.n.i. and then since $\wt g(x)=o(h(x))$, 
\begin{align*}
 I_4(x)&=\int_{-\infty}^{\alpha(x)} \frac{\wt g(x-y)}{h(x-y)} \frac{h(x-y)}{h(x-\alpha(x))}
\frac{h(x-\alpha(x))}{h(x)}  
f(y)dy \\
&\le 
o(1) \frac{\sup_{y\ge x-\alpha(x)}h(y)}{h(x-\alpha(x))} 
\int_{-\infty}^{\alpha(x)} f(y)dy \to 0\quad {\rm as }\ x\to\infty. 
\end{align*}
When $\wt g(x)=o(f(x))$, by \eqref{pf:convolution:limsup} and the a.n.i. property of $f$, 
we have 
\begin{align*}
 I_4(x) &= \int_{-\infty}^{-\alpha(x)} \frac{\wt g(x-y)}{f(x-y)} \frac{f(x-y)}{f(x+\alpha(x))} \frac{f(x+\alpha(x))}{h(x+\alpha(x))}
\frac{h(x+\alpha(x))}{h(x)}  f(y)dy \\
&\quad + \int_{-\alpha(x)}^{\alpha(x)} \frac{\wt g(x-y)}{h(x-y)} \frac{h(x-y)}{h(x)}  
f(y)dy \\
&\le o(1)
\int_{-\infty}^{\alpha(x)} f(y)dy \to 0\quad {\rm as }\ x\to\infty. 
\end{align*}
Thus in view of above results, \eqref{pf:convolution:liminf} follows. 
Now by Lemma \ref{lem:a3-2} together with the a.n.i. property of $f$, we obtain $f\in\cals_+$.
\end{proof}


\subsection{Proofs for results needed for Theorem \ref{thm:compundpoi} in Section \ref{sec:cp}}
\label{subsec:pr:needed:section4}

\begin{proof}[Proof of Proposition \ref{prop:fact:gene}]
With the form $\wt g(x)$, we may write 
\begin{align}
\label{eq:pf:factgene}
 1=p_g \wt f(x)/\wt h(x)+(1-p_g) \wt f \ast g(x)/\wt h(x), 
\end{align}
and put 
\[
\ov C := \limsup_{x\to \infty} \wt f(x)/ \wt h(x)\quad \text{and}\quad \underline{C}:=\liminf_{x\to\infty} \wt f(x)/\wt h(x), 
\]
which are well-defined since $\wt h(x)\ge p_g \wt f(x)$ and the Dirac delta parts disappear for $x>0$.

Assume the first condition. 
Take an insensitive function $\alpha$ for $\wt h$ and consider 
\begin{align*}
 \frac{\wt f \ast g(x)}{\wt h(x)} &= \Big(
\int_{-\infty}^{-\alpha(x)}+ \int_{-\alpha(x)}^{\alpha(x)} +\int_{\alpha(x)}^\infty 
\Big)  \frac{\wt f(x-y) g(y)}{\wt h(x)}dy \\
& =: I_1(x)+I_2(x)+I_3(x). 
\end{align*}
By Fatou's lemma and $\wt h\in\call$ 
\begin{align}
 \liminf_{x\to\infty} I_2(x) &= \liminf_{x\to\infty} \int_{-\alpha(x)}^{\alpha(x)} 
\frac{\wt f(x-y)}{\wt h(x-y)} \frac{\wt h(x-y)}{\wt h(x)} g(y)dy \label{ineq:pf:I2i} \\
& \ge \int_{-\infty}^\infty \liminf_{x \to\infty} \frac{\wt f(x-y)}{\wt h(x-y)}
{\bf 1}_{\{y\in[-\alpha(x),\alpha(x)]\}} g(y)dy \ge \underline{C}, \nonumber
\end{align}
where we may take a continuous $\alpha(x)$ if needed. 
Again by Fatou's lemma 
\begin{align}
 \limsup_{x\to \infty} I_2(x) \le \int_{-\infty}^\infty \limsup_{x\to\infty} 
\frac{\wt f(x-y)}{\wt h(x-y)} {\bf 1}_{\{y\in [-\alpha(x),\alpha(x)]\}} g(y)dy \le \ov C.
\label{ineq:pf:I2s}
\end{align}
Since $\wt h(x) \ge p_g \wt f(x)$ by \eqref{eq:pf:factgene} and $\wt g(x)=o(\wt h(x))$ we have 
\begin{align}
 \limsup_{x\to\infty} I_3(x) &= \limsup_{x\to\infty} \int_{-\infty}^{x-\alpha(x)}
\frac{g (x-y)}{\wt h(x-y)}\frac{\wt h(x-y)}{\wt h(x)} p_g^{-1} \wt h(y) dy  \label{ineq:pf:I3} \\
& \le \limsup_{x \to \infty} \frac{g(x)}{\wt h(x)}  c \limsup_{x\to \infty} \frac{\wt h^{\ast 2}(x)}{\wt h(x)} =0.  
\nonumber
\end{align}
By the al.d. property of $\wt h$, it follows that 
\begin{align}
\limsup_{x\to\infty} I_1(x) &= \limsup_{x\to\infty} \int_{-\infty}^{-\alpha(x)}
 \frac{\wt f (x-y)}{\wt h(x-y)} \frac{\wt h(x-y)}{\wt h(x)} g(y)dy \label{ineq:pf:I4} \\
& \le c \limsup_{x\to\infty} \int_{-\infty}^{-\alpha(x)} g(y)dy =0. \nonumber
\end{align}
Collecting 
\eqref{ineq:pf:I2i}-\eqref{ineq:pf:I4}, we obtain 
\begin{align}
 \liminf_{x\to\infty} \frac{\wt f\ast g (x)}{\wt h(x)} \ge \underline{C}\quad \text{and}\quad \limsup_{x\to\infty} 
\frac{\wt f \ast g (x)}{\wt h(x)} \le \ov{C}. 
\end{align}
Now taking $\liminf_{x\to\infty}$ and $\limsup_{x \to \infty}$ on both sides of \eqref{eq:pf:factgene}, we have 
\begin{align}
\label{ineq:pf:factgene}
 p_g \ov C +(1-p_g) \underline{C}\ \le  1\  \le\ p_g \underline{C} +(1-p_g) \ov C 
\end{align}
and thus 
\[
 0\le (1-2 p_g)(\ov C-\underline{C}).
\]
The assumption $p_g \in (2^{-1},1)$ implies $\ov C=\underline{C}$. Moreover from \eqref{ineq:pf:factgene}, $\ov C=\underline{C}=1.$ 
Then noticing $p_g \wt f(x) \le \wt h(x)$ for all $x\in\R$, we have $\wt f\in \cals$ from Lemma \ref{lem:a3-2}.   

Next assume the second condition. In view of the expression 
\begin{align}
 \wt h(x) =p_g p_f \delta(x) +(1-p_f) p_g f(x)+(1-p_g)p_f g(x) +(1-p_g)(1-p_f) f\ast g(x),
\end{align}
we notice that $\wt h(x) \ge c g(x)$ and $\wt h(x) \ge c f(x)$ hold for all $x\in \R$. From the first part 
proof, it suffices to show that $\limsup_{x\to\infty} I_1(x)=0$ where the al.d. property of $\wt h$ is used. 
However, by above inequalities 
\begin{align*}
 \limsup_{x\to\infty} I_1(x) &= \limsup_{x\to\infty} \int_{-\infty}^{-\alpha(x)} 
\frac{\wt f(x-y)}{\wt h(x-y)} \frac{g (y)}{\wt h(y)} 
\frac{\wt h(x-y) \wt h(y)}{\wt h(x)} dy \\
& \le c  \limsup_{x\to\infty} 
\int_{-\infty}^{-\alpha(x)} \frac{\wt h(x-y) \wt h(y)}{\wt h(x)} dy =0. 
\end{align*}
\end{proof}

\begin{proof}[Proof of Corollary \ref{prop:fact:cp}]
 We assume non-degeneracy for $\wt g$, since otherwise the proof is obvious. 
Let $\lambda$ be the Poisson parameter of $\wt g$. 
If $\lambda < \log 2$, then since $e^{-\lambda}>2^{-1}$ 
the result is immediate from Proposition \ref{prop:fact:gene}.
If $\lambda \ge \log 2$, we take an integer $n$ such that $\lambda/n <\log 2$, and 
define a compound Poisson density 
\[
 \wt g_{1/n}(x)= e^{-\lambda/n}\delta(x)+(1-e^{-\lambda/n})g_{1/n}(x)
\]
with $g_{1/n}$ be the proper absolutely continuous part such that $\wt g=\wt g_{1/n}^{\ast n}$. 
Notice that since 
\[
 \wt g(x) = \big(e^{-\lambda/n}\delta + (1-e^{-\lambda/n})g_{1/n} \big)^{\ast n}(x) \ge n e^{-\lambda(n-1)/n}(1-e^{-\lambda/n})g_{1/n}(x) 
\]
and thus $\wt g(x)=o(\wt h(x))$ implies $\wt g_{1/n}(x)=o(\wt h(x))$. Moreover, the coefficient $e^{-\lambda k/n}p$ of 
the delta part $\delta$ in $\wt g_{1/n}^{\ast k}\ast \wt f$ satisfies $e^{-\lambda k/n}p\in (0,1),\,k=1,\ldots,n$. 
Now we apply Proposition \ref{prop:fact:gene} to 
$\wt g_{1/n}\ast (\wt g_{1/n}^{\ast (n-1)}\ast \wt f)$ with $\wt g_{1/n}$ be the negligible part, 
and obtain that $\wt g_{1/n}^{\ast (n-1)}\ast \wt f \in \cals$ and $\wt h(x)\sim \wt g_{1/n}^{\ast (n-1)}\ast \wt f(x)$.
Here, if $\wt h$ is al.d., then by Lemma \ref{lem:asymp:eqive:ald}, $\wt g_{1/n}^{\ast (n-1)}\ast \wt f(x)$ is also al.d. 
We iterate this step until we reach $\wt f\in \cals$ and $\wt h(x)\sim \wt f(x)$. 
\end{proof}

\subsection{Proof of Theorem \ref{thm:compundpoi}}
\label{subsec:pr:maintheorem:section4}
Firstly we state an auxiliary lemma and then go to the proof of Theorem \ref{thm:compundpoi}. 
\begin{lemma}
\label{lem:cp:lower}
Let $\mu$ be a compound Poisson with ch.f.  
\begin{align*}
\wh \mu (z) & = \exp\Big(
\lambda\int_{-\infty}^{c_1} (e^{izy}-1)g(y)dy
\Big), 
\end{align*}
where $g$ is bounded. Then for any $c_1>0$ there exists $\gamma >0$
such that the absolutely continuous part $f$ of $\mu$ satisfies $f(x)=o(e^{-\gamma x})$. 
\end{lemma}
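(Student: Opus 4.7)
The plan is to exploit the support restriction of the L\'evy measure. Since the jump density $\lambda g(y)\mathbf{1}_{(-\infty,c_1]}(y)$ vanishes on $(c_1,\infty)$, every jump of the underlying compound Poisson process is at most $c_1$; consequently a position $>x>0$ can only be reached after strictly more than $x/c_1$ jumps, which is a very unlikely event when $x$ is large. That single observation should force $f$ to decay faster than any exponential.

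Quantitatively, set $\Lambda := \lambda\int_{-\infty}^{c_1}g(y)\,dy<\infty$ (finite since $\mu$ is compound Poisson) and let $p(y):=\lambda g(y)\mathbf{1}_{(-\infty,c_1]}(y)/\Lambda$ be the normalized jump density. Then the absolutely continuous part admits the usual expansion
\[
f(x)=e^{-\Lambda}\sum_{n=1}^\infty \frac{\Lambda^n}{n!}\,p^{\ast n}(x).
\]
Since $p$ is supported on $(-\infty,c_1]$, an obvious induction gives that $p^{\ast n}$ is supported on $(-\infty,nc_1]$, so for $x>0$ the terms with $n<x/c_1$ drop out and
\[
f(x)=e^{-\Lambda}\sum_{n\ge \lceil x/c_1\rceil} \frac{\Lambda^n}{n!}\,p^{\ast n}(x).
\]
The boundedness of $g$ makes $p$ bounded by some $M$, and then $p^{\ast n}(x)=\int p^{\ast(n-1)}(x-y)p(y)\,dy\le M$ for every $n\ge 1$.

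Combining these two points,
\[
f(x)\;\le\; M\sum_{n\ge \lceil x/c_1\rceil}\frac{\Lambda^n}{n!}e^{-\Lambda}\;=\;M\,P\bigl(N_\Lambda\ge \lceil x/c_1\rceil\bigr),
\]
with $N_\Lambda\sim\mathrm{Poisson}(\Lambda)$. The standard Poisson right-tail estimate (Chernoff or Stirling) yields $P(N_\Lambda\ge k)\le (e\Lambda/k)^k$ for $k>e\Lambda$, so for $x$ large enough
\[
f(x)\;\le\; M\Big(\tfrac{e\Lambda c_1}{x}\Big)^{x/c_1},
\]
which decays faster than any exponential. Hence $e^{\gamma x}f(x)\to 0$ for \emph{every} $\gamma>0$, which is strictly stronger than the conclusion sought.

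There is no real technical obstacle here; the whole argument rests on the twin observations that support-truncation of $p$ forces the lower summation index to grow linearly in $x$ and that boundedness of $g$ gives a uniform pointwise bound on $p^{\ast n}$. The only minor thing to be careful about is the identification of the series expansion with the genuine absolutely continuous part of $\mu$: the $n=0$ term contributes the atom $e^{-\Lambda}\delta_0$ and all the $n\ge 1$ terms are absolutely continuous, so the displayed series is indeed $f$ in the Lebesgue-decomposition sense.
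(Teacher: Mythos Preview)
Your argument is correct and in fact slightly stronger than what is claimed: you obtain $e^{\gamma x}f(x)\to 0$ for \emph{every} $\gamma>0$, not just for some $\gamma>0$. The only cosmetic slip is the normalisation constant in front of the series (the proper absolutely continuous part carries the factor $(e^{\Lambda}-1)^{-1}$ rather than $e^{-\Lambda}$), but this is irrelevant for the tail estimate.

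Your route differs from the paper's. The paper first splits $\mu=\mu_1\ast\mu_2$ according to whether the jump is in $(-\infty,0]$ or $(0,c_1]$, applies an exponential tilt to the positive-jump piece $\mu_2$ to rewrite $e^{\gamma x}f_2(x)$ as a constant times the absolutely continuous part of another compound Poisson, and only then invokes the support truncation $g_e^{\ast n}\equiv 0$ on $(nc_1,\infty)$ to kill the tail; the negative-jump piece $\mu_1$ is handled at the end by a trivial convolution bound. You bypass both the decomposition and the tilting: the support restriction $p^{\ast n}\equiv 0$ on $(nc_1,\infty)$ and the uniform bound $p^{\ast n}\le M$ are applied directly to the full series, reducing everything to the Chernoff tail of a Poisson variable. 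Your approach is shorter and makes the super-exponential decay transparent; the paper's tilting viewpoint, on the other hand, fits naturally with the exponential-moment machinery used elsewhere (e.g.\ Lemma~\ref{lem:pf:main:gamma:moment} and Lemma~\ref{lem:abs:int:chf}) and explains structurally why any $\gamma$ works.
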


\begin{proof}
We decompose $\wh \mu(z)$ into 
\[
 \wh \mu(z)= \exp \Big\{
\lambda \Big(
\int_{-\infty}^0 + \int_{0}^{c_1}\Big) (e^{izy}-1)g(y)dy
\Big\}=: \wh \mu_{1}(z) \wh \mu_{2}(z). 
\]
First we consider the compound Poisson $\mu_2$ and let $\Lambda_2=G(c_1)-G(0)<\infty$. 
We write the proper absolutely continuous part as 
\[
 f_2(x)=(e^{\lambda \Lambda_2}-1)^{-1} \sum_{n=1}^\infty \big( (\lambda \Lambda_2)^n/n! \big)\, g_2^{\ast n}(x), 
\]
where $g_2(x)=\Lambda_2^{-1} g(x){\bf 1}_{\{0\le x \le c_1\}}$. Since $g$ is bounded, $f_2(x)$ is bounded as well. 
Recall that for any $\gamma>0,\,\int_{[0,\infty)} e^{\gamma x} \mu_2(dx)<\infty$ (\cite[Theorem 25.3]{sato:1999}). So from e.g. \cite[Ex. 33.15]{sato:1999} 
(cf. \cite[Lemma 7]{Watanabe:2020} and \cite[Theorem 3.9]{Kuprianou:2006}), we can define the exponential tilt $(\mu_{2})_\gamma$ of $\mu_{2}$ as 
\[
 (\mu_{2})_\gamma (dx) = \frac{e^{\gamma x}}{\int_{[0,\infty)} e^{\gamma x}\mu_{2}(dx)} \mu_{2}(dx). 
\]
Then $(\mu_2)_\gamma$ is again the compound Poisson with the proper absolutely continuous part 
\[
 f_e(x)= (e^{\lambda \Lambda_e}-1)^{-1} \sum_{n=1}^\infty ((\lambda \Lambda_{e})^n /n!)\, g_{e}^{\ast n}(x)
\]
with $\Lambda_e=\int_0^{c_1} e^{\gamma x}g (x)dx$ and $g_e(x)=(\Lambda_2/\Lambda_e) e^{\gamma x} g_2(x)$. 
For any $x>0$, the right-hand side is well defined. 
Since the support of $g_{e}^{\ast n}$ is included in the interval $[0,nc_1]$, we have 
\[
 \lim_{x\to \infty} e^{\gamma x}f_2(x) = \lim_{x\to\infty} cf_e(x)= c\lim_{x\to\infty} \sum_{nc_1\ge x}^\infty
((\lambda \Lambda_e)^n /n!)\,  g_{e}^{\ast n}(x)=0. 
\]
Now since $\mu_1$ is a compound Poisson with non-positive support, 
it suffices to check that for the absolutely continuous part $f_1$ of $\mu_1$,  
\begin{align}
\label{eq:convolution:fa1fa2}
 e^{\gamma x}f_1\ast f_2(x) &= \int_{-\infty}^\infty e^{\gamma (x-y)}f_2(x-y)e^{\gamma y} f_1(y)dy \\
&\le o(1) \int_{-\infty}^0 e^{\gamma y}f_1(y)dy. \nonumber
\end{align} 
Thus we may take some $\gamma>0$ such that $f(x)=o(e^{-\gamma x})$. 
\end{proof}

\begin{proof}[Proof of Theorem \ref{thm:compundpoi}]
Denote   
the Laplace transform of $f$ by 
$L_{f}(z)=\int_0^\infty e^{z x}f(x)dx$. \\

\noindent
{\bf $(\mathrm{i})$ implies $(\mathrm{ii})$ and $(\mathrm{iii})$} \\
Since $g$ is a proper density, we may choose $c_1>0$ such that $\ov G(c_1)=\Lambda_1<\log 2/\lambda$ and define another 
compound Poisson by 
\[
\wh \mu_1 (z) = \exp\Big(
\lambda \Lambda_1 \int_{c_1}^{\infty} (e^{izy}-1)g_1(y)dy
\Big),\quad  g_1(x)=g(x)/\Lambda_1, 
\]
so that 
\[
 \mu=\mu_1\ast \mu_2,\quad i.e.\quad \wh \mu(z)=\wh \mu_1(z)\wh \mu_2(z),
\]
where 
\[
\wh \mu_2 (z) = \exp\Big(
\lambda \Lambda_2 \int_{-\infty}^{c_1} (e^{izy}-1)g(y)/\Lambda_2 dy
\Big),\quad  \Lambda_2=G(c_1). 
\]
Let $\wt f_1$ and $\wt f_2$ be generalized densities of $\mu_1$ and $\mu_2$ respectively. 
By Lemma \ref{lem:cp:lower} $\wt f_2(x)=o(e^{-\gamma x})$ with some $\gamma>0$, so that 
$\wt f_2(x)=o(\wt f(x))$.
Now apply Corollary \ref{prop:fact:cp} with $(\wt h,\wt g, \wt f)$ there be $(\wt f,\wt f_2,\wt f_1)$ here and 
obtain $\wt f_1\in\cals(=\cals_+)$ and $\wt f_1(x)\sim \wt f(x)$. 

For the proper densities $f_1$ of $\wt f_1$, 
since $\wt f_1 \in \cals \Leftrightarrow f_1\in \cals$, 
we will see that 
$f_1\in \cals_+$ implies $g_1\in \cals_+$. This part is totally due to \cite[Theorem 1]{shimura:watanabe:2022} and for 
consistency we give a proof. 
Write 
\begin{align}
\label{def:tranc:cp}
 f_1(x) = (e^{\lambda \Lambda_1}-1)^{-1} \sum_{n=1}^{\infty}((\lambda \Lambda_1)^n/n!) g_1^{\ast n}(x) 
\end{align}
whose Laplace transform is 
\[
 L_{f_1}(z)=(e^{\lambda \Lambda_1 L_{g_1}(z)}-1)/(e^{\lambda \Lambda_1}-1),
\]
so 
\[
 \lambda \Lambda_1 L_{g_1}(z)=\log \big( 1-(1-e^{\lambda \Lambda_1})L_{f_1}(z) \big). 
\]
Since $e^{\lambda \Lambda_1}-1 <1$, we have 
\[
 \lambda \Lambda_1 L_{g_1}(z)= -\sum_{n=1}^\infty n^{-1}(1-e^{\lambda \Lambda_1})^n L^n_{f_1}(z) 
\]
and thus 
\begin{align}
\label{def:inverse:chf}
 \lambda_1 \Lambda_1 g_1(x) \stackrel{a.e.}{=} -\sum_{n=1}^\infty n^{-1} (1-e^{\lambda \Lambda_1})^n f_1^{\ast n}(x)=:\lambda_1 \Lambda_1
\breve g_1(x), 
\end{align}
where $\stackrel{a.e.}{=}$ implies that the equality holds $a.e.\,x\in \R.$

 We derive $\breve{g}_1 \in \cals_+$ and the 
tail equivalence between $f_1$ and $\breve{g}_1$. Then using \eqref{def:tranc:cp} we prove $g_1 \in \cals_+$. 
Take a sufficiently small $\vep >0$ such that $(e^{\lambda} -1)(1+\vep)<1$. 
By Lemma \ref{lem:kestenbound} 
there exists $C_\vep$ such that $f^{\ast n}_1(x) \le C_\vep (1+\vep)^n f_1(x)$ for $x$ sufficiently large. 
Applying the dominated convergence in \eqref{def:inverse:chf} we obtain 
\begin{align}
\label{tail:equiv:gf:symmetric}
 \lim_{x\to\infty} \breve g_1(x)/f_1(x) =(1-e^{-\lambda \Lambda_1})/(\lambda\Lambda_1), 
\end{align}
so that by Lemma \ref{lem:a2-1}
$(\mathrm{vi})$ 
$\breve g_1 \in\cals_+$. 
Now write \eqref{def:tranc:cp} as 
\begin{align*}
 (e^{\lambda \Lambda_1}-1)f_1(x)/\breve g_1(x)-\lambda \Lambda_1 g_1(x)/\breve g_1(x) &= 
\sum_{n=2}^\infty ((\lambda \Lambda_1)^n/n!)\, g^{\ast n}_1(x)/\breve g_1(x) \\
&= \sum_{n=2}^\infty ((\lambda \Lambda_1)^n/n!)\, \breve g^{\ast n}_1(x)/\breve g_1(x)
\end{align*}
and let $x\to\infty$. Again by the dominated convergence we obtain by \eqref{tail:equiv:gf:symmetric} that $\breve g_1(x) \sim g_1(x)$.  
Thus we prove $g_1 \in\cals_+$ and $g_1(x)\sim (1-e^{-\lambda \Lambda_1})/(\lambda\Lambda_1) f_1(x)$.
Now we see 
\begin{align}
\label{eq:ratio:f1g1}
 \lim_{x\to\infty} \frac{f(x)}{g(x)}=\lim_{x\to\infty} \frac{f(x)}{f_1(x)} \frac{f_1(x)}{g_1(x)} \frac{g_1(x)}{g(x)}=\lambda/(1-e^{-\lambda}). 
\end{align}
In addition, due to Lemma \ref{lem:asymp:eqive:ald}, the al.d. property of $g$ is implied by that of $f$ and $(\mathrm{iii})$ follows. 
Moreover, by Lemma \ref{lem:a3-2}, $(\mathrm{ii})$: $g\in \cals$ is immediate. \\


\noindent
{\bf $(\mathrm{iii})$ implies  $(\mathrm{ii})$} \\
We write 
\[
 g^{\ast 2}(x) = (2/\lambda^2 e^{\lambda}) \big\{
(1-e^{-\lambda}) f(x) -e^{-\lambda} \sum_{n\neq 2}^\infty (\lambda^n/n!)\, g^{\ast n}(x)
\big\}.
\] 
Dividing this by $g$ and taking $\limsup$ on both sides, we have by Fatou's lemma 
\begin{align*}
 \limsup_{x\to \infty} \frac{g^{\ast 2}(x)}{g(x)} 
& \le (2/\lambda^2) e^\lambda \Big(
\lambda -e^{-\lambda} \sum_{n\neq 2}^\infty \liminf_{x\to\infty} \frac{\lambda^n}{n!} \frac{g^{\ast n}(x)}{g(x)}
\Big)=2, 
\end{align*}
where we use Lemma \ref{lem:a2-1} $(\mathrm{iii})$.
Now
Lemma \ref{lem:a2-1} $(\mathrm{iii})$ with $n=2$ 
implies 
$g\in \cals$. \\
 
\noindent
{\bf $(\mathrm{ii})$ implies $(\mathrm{iii})$ and $(\mathrm{i})$} \\
Lemma \ref{lem:kestenbound} holds with $g$ by al.d. property.
Thus applying $g^{\ast n}(x)/g(x) \to n$ (\cite[Theorem 1.1.]{Finkelshtein:Tkachov:2018} for the two-sided case) and 
the dominated convergence to \eqref{def:cp:acp}, we show that $(\mathrm{ii})$ implies $(\mathrm{iii})$. 
Then $(\mathrm{ii})$ and $(\mathrm{iii})$ together with the al.d. property of $g$ 
yield $f\in \cals$ by Lemma \ref{lem:a3-2}. 
\end{proof}

\subsection{Proof of Theorem \ref{theorem:ID}}
\label{subsec:pr:maintheorem:section5}
The proof of the part [$(\mathrm{i})$ implies $(\mathrm{ii})$ and $(\mathrm{iii})$ under the condition that $g$ is a.n.i.]
 is the hardest part to come up with the proof idea and we need a few extended notion of $\call$ and $\cals$ and 
auxiliary results before. 

\begin{definition}
$(\mathrm{i})$ Let $\Delta:=(0,c]$ with $c>0$. $F$ belongs to the class $\call_\Delta$ if $F(x+\Delta):=F(x+c)-F(x)\in \call$. 
Moreover, $F$ belongs to the class $\cals_\Delta$ if $F\in \call_\Delta$ and $F^{\ast 2}(x+\Delta)\sim 2F(x+\Delta)$. \\\
$(\mathrm{ii})$ $F$ belongs to the class $\call_{loc}$ if $F\in \call_\Delta$ for all $\Delta=(0,c]$ with $c>0$, and moreover, 
$F$ belongs to the class $\cals_{loc}$ if $F\in \cals_\Delta$ for all $\Delta$. 
\end{definition}
Obviously $\cals_\Delta \subset \cals_{loc}$. The following connects $\cals$ and $\cals_{loc}$, which is a partial result 
in Matsui and Watanabe\footnote{''A note on subexponentiality'', unpublished manuscript}. 

\begin{lemma}
\label{lem:sloc:s}
 Let $F$ be a distribution on $\R_+$ and $f$ be its density. If $f$ is a.n.i., then 
$F\in \cals_{loc} \Leftrightarrow f\in \cals$. 
\end{lemma}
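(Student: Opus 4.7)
The plan is to leverage the fact that, when $f$ is a.n.i., the local mass $F(x+\Delta)=\int_x^{x+c}f(s)\,ds$ is tightly tied to the pointwise density $f(x)$. Once $f\in\call$ is available, Lemma~\ref{lem:a2-1}(i) supplies an insensitivity function forcing $f(s)\sim f(x)$ uniformly on $[x,x+c]$, and hence $F(x+\Delta)\sim c\,f(x)$. Since Lemma~\ref{lem:a2-1}(iv) gives $f^{\ast 2}\in\call$, the same argument will yield $F^{\ast 2}(x+\Delta)\sim c\,f^{\ast 2}(x)$. Under this scheme, the pointwise equivalence $f^{\ast 2}(x)\sim 2 f(x)$ is literally the same statement as the local $F^{\ast 2}(x+\Delta)\sim 2 F(x+\Delta)$.

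For the direction $f\in\cals\Rightarrow F\in\cals_{loc}$, I would first use $f\in\cals\subset\call$ to apply the machinery above: it gives $F(x+\Delta)\sim c f(x)$ and $F(x+y+\Delta)/F(x+\Delta)\sim f(x+y)/f(x)\to 1$, so $F\in\call_\Delta$. Combining $F^{\ast 2}(x+\Delta)\sim c f^{\ast 2}(x)$ with $f^{\ast 2}(x)\sim 2 f(x)$ then yields $F^{\ast 2}(x+\Delta)\sim 2 F(x+\Delta)$. Both hold for every $c>0$, so $F\in\cals_{loc}$.

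For the converse $F\in\cals_{loc}\Rightarrow f\in\cals$, the crucial preliminary step is to extract $f\in\call$ from $F\in\call_{loc}$. By Lemma~\ref{lem:equiv:ani:function} I would write $f\sim\alpha$ with $\alpha$ non-increasing, so $F(x+\Delta)\sim\int_x^{x+c}\alpha(s)\,ds$ with the monotone sandwich $c\,\alpha(x+c)\le\int_x^{x+c}\alpha(s)\,ds\le c\,\alpha(x)$. The assumption $F\in\call_\Delta$ converts into
\[
\frac{\int_{x+y}^{x+y+c}\alpha(s)\,ds}{\int_x^{x+c}\alpha(s)\,ds}\longrightarrow 1,
\]
and the sandwich bounds this ratio above by $\alpha(x+y)/\alpha(x+c)$ and below by $\alpha(x+y+c)/\alpha(x)$. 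Choosing $y>c$, monotonicity of $\alpha$ gives $\alpha(x+y)/\alpha(x+c)\le 1$, while the lower bound yields $\liminf \alpha(x+y)/\alpha(x+c)\ge 1$; hence $\alpha(x+y)/\alpha(x+c)\to 1$. Substituting $z=x+c$ and $w=y-c>0$ (arbitrary as $c$ varies) yields $\alpha\in\call$, hence $f\in\call$. With $f\in\call$ secured, the machinery of the first paragraph delivers $F(x+\Delta)\sim c f(x)$ and $F^{\ast 2}(x+\Delta)\sim c f^{\ast 2}(x)$, and the hypothesis $F\in\cals_\Delta$ forces $f^{\ast 2}(x)\sim 2 f(x)$, completing $f\in\cals$.

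The hardest step is precisely this extraction of $f\in\call$ in the converse: a.n.i.\ by itself does not entail $\call$, so the monotone-envelope sandwich using the non-increasing representative $\alpha$ is indispensable. Once $f\in\call$ is in hand, the remainder is routine manipulation with insensitivity functions and the closure of $\call$ under convolution.
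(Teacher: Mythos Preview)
Your argument is correct and follows essentially the same route as the paper: both first establish $f\in\call$ from $F\in\call_\Delta$ via a sandwich with monotone envelopes (the paper uses $\sup_{t\ge x}f(t)$ and $\inf_{t\in[x_0,x]}f(t)$ directly from the a.n.i.\ definition, while you pass through the non-increasing representative $\alpha$ supplied by Lemma~\ref{lem:equiv:ani:function}, which is the same device in different packaging), and then both use the uniform convergence of $\call$ to obtain $F(x+\Delta)\sim c\,f(x)$ and $F^{\ast2}(x+\Delta)\sim c\,f^{\ast2}(x)$, from which the equivalence is immediate. One minor point: your phrase ``arbitrary as $c$ varies'' is unnecessary---for fixed $c$, letting $y$ range over $(c,\infty)$ already gives every $w=y-c>0$---and the sentence ``the lower bound yields $\liminf\alpha(x+y)/\alpha(x+c)\ge1$'' should read that the \emph{upper} bound together with the ratio tending to $1$ gives this $\liminf$; but these do not affect the validity.
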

\begin{proof}[Proof of Theorem \ref{lem:sloc:s}]
First, we see $f\in \call$, so that $f^{\ast 2}\in \call$. Take 
a sufficiently large $x_0>0$ such that $\inf_{t \in[x_0,x]} f(t) \sim f(x)$. 
For any $y\in \R$
\begin{align*}
 \frac{f(x+y)}{f(x)} &= \frac{\sup_{t\ge x+y}f (t)}{\inf_{s\in [x_0,x]} f(s)}
\frac{f(x+y)}{\sup_{t\ge x+y}f(t)} \frac{\inf_{s\in [x_0,x]}f(s)}{f(x)} \\
& \ge \frac{F(x+y+\Delta)}{F(x-c+\Delta)}
\frac{f(x+y)}{\sup_{t\ge x+y}f(t)} \frac{\inf_{s\in [x_0,x]}f(s)}{f(x)} 
\end{align*}
and moreover, 
\begin{align*}
 \frac{f(x+y)}{f(x)} &= \frac{\inf_{t \in [x_0, x+y]} f (t)}{\sup_{s\ge x} f(s)}
\frac{f(x+y)}{ \inf_{t \in [x_0, x+y]} f(t)} \frac{\sup_{s\ge x}f(s)}{f(x)} \\
& \le \frac{F(x+y-c+\Delta)}{F(x+\Delta)}
\frac{f(x+y)}{ \inf_{t \in [x_0, x+y]}f(t)}  \frac{\sup_{s\ge x}f(s)}{f(x)}.
\end{align*}
Now since $F\in \call_\Delta$ and $f$ is a.n.i., by taking $\lim_{x\to \infty}$ in both inequality we have 
\[
 1 =\lim_{x\to\infty}\frac{F(x+y+\Delta)}{F(x-c+\Delta)} \le \lim_{x\to\infty}
\frac{f(x+y)}{f(x)} \le \lim_{x\to\infty} \frac{F(x+y-c+\Delta)}{F(x+\Delta)}=1 
\]
and obtain $f\in \call$. Then the uniform convergence property of $\call$ yields 
\[
 F(x+\Delta) \sim c f(x)\quad \text{and}\quad F^{\ast 2}(x+\Delta)\sim cf^{\ast 2}(x)\quad \text{for all}\quad c>0.
\]
Hence $F\in \cals_{loc} \Leftrightarrow f\in \cals$. 
\end{proof}
\begin{lemma}
\label{lem:factorization:id}
 Let $\mu \in \id$ with $\nu(dx)=g(x)dx$ such that $\nu(\R)=\infty$ and denote a density of $\mu$ by $f$. Let $\mu_1$
 be a compound Poisson with L\'evy measure $\nu(dx)={\bf 1}_{\{x\le -1\}}g(x)dx$ and $\gamma=0$, and let $\mu_2 \in \id$ 
such that $\mu=\mu_1\ast \mu_2$, i.e. $\wh \mu_2(z)$ is given by $\wh \mu(z)$ of \eqref{def:chf:idr}
with $\nu(dx)={\bf 1}_{\{x>-1\}}g(x)dx$. Denote a density of $\mu_2$ by $f_2$. Then, if 
$f\in \cals$ and $f$ is al.d., then $f_2\in \cals$ and $f(x)\sim f_2(x)$. 
\end{lemma}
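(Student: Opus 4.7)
The plan is to reduce this to a direct application of Corollary \ref{prop:fact:cp} (the compound Poisson factorization). The decisive observation is that the L\'evy measure of $\mu_1$ is concentrated on $(-\infty,-1]$, so the generalized density $\wt f_1$ of $\mu_1$ has support in $(-\infty,0]$. Consequently $\wt f_1(x)=0$ for all $x>0$, and in particular $\wt f_1(x)=o(f(x))$ as $x\to\infty$.

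First I would verify the basic structural facts. Since $\nu(\R)=\infty$ while $\nu_1(\R)=\int_{-\infty}^{-1}g(y)\,dy<\infty$ (because $\nu$ is a L\'evy measure), the residual L\'evy measure $\nu_2(dx)=\mathbf{1}_{\{x>-1\}}g(x)\,dx$ satisfies $\nu_2(\R)=\infty$, so $\mu_2$ has a genuine density $f_2$ by \cite[Theorem 27.7]{sato:1999}. The factorization $\mu=\mu_1\ast\mu_2$ then yields $f(x)=\wt f_1\ast f_2(x)$ pointwise (in the generalized sense, with the delta mass of $\wt f_1$ at the origin contributing an $e^{-\lambda_1}f_2(x)$ term, where $\lambda_1=\nu_1(\R)$).

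Next I would apply Corollary \ref{prop:fact:cp} with the identifications $\wt h=f$, $\wt g=\wt f_1$, and $\wt f=f_2$. The hypotheses are checked as follows: $\wt h=f\in\cals$ by assumption; $\wt g=\wt f_1$ is a compound Poisson by construction; $\wt h=f$ is al.d.\ by assumption; and $\wt g(x)=o(\wt h(x))$ follows from the support consideration above. Corollary \ref{prop:fact:cp} then delivers both conclusions: $f_2\in\cals$ and $f(x)\sim f_2(x)$ as $x\to\infty$.

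There is no real obstacle; the only delicate point is the bookkeeping between the proper density $f_2$ and the generalized density $\wt f_1$ (which carries a Dirac component), so that the convolution identity $f=\wt f_1\ast f_2$ is interpreted correctly. Once this is set up, Corollary \ref{prop:fact:cp} applies verbatim, and no further estimate on $f_2$ is needed before invoking it.
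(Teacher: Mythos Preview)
Your proposal is correct and follows exactly the paper's approach: the paper's proof is a single line invoking Corollary~\ref{prop:fact:cp} with $\wt h=f$, $\wt g=\wt f_1$, and $\wt f=f_2$. You supply the surrounding verifications (support of $\wt f_1$, existence of $f_2$, the little-$o$ condition) that the paper leaves implicit, but the argument is identical.
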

\begin{proof}
Putting $\wt h=f,\,\wt g=f_1$ and $\wt f=f_2$ in Corollary \ref{prop:fact:cp}, immediately we obtain the result.   
\end{proof}

For the proof of parts [$(\mathrm{iii}) \Rightarrow (\mathrm{ii})$ and $(\mathrm{ii}) \Rightarrow (\mathrm{i})$ and $(\mathrm{iii})$],
we need the following lemma.
\begin{lemma}
\label{lem:pf:main:gamma:moment}
For $c_1>1$, define $\mu_r\in\id$ with ch.f. 
\begin{align}
\label{def:mua}
 \wh \mu_r(z) = \exp \Big\{
\int_{-\infty}^{c_1} (e^{izy}-1-izy {\bf 1}_{\{|y|\le 1\}})g(y)dy +iaz -\frac{1}{2} b^2 z^2
\Big\}.
\end{align}
Under the condition of Theorem \ref{theorem:ID}, the density $f_r$ of $\mu_r$ satisfies 
\begin{align}
\label{fa:fr:exponential}
 \lim_{x\to\infty}e^{\gamma x}f_r(x)=0.
\end{align}
\end{lemma}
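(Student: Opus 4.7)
The plan is to reduce the claim to a dominated convergence argument by factoring $\mu_r=\mu_0\ast\mu_\ell$, with $\mu_\ell$ collecting the large-jump piece together with the Gaussian/drift component, and then tilting by $e^{\gamma y}$. Since $\int(1\wedge y^2)g(y)\,dy<\infty$ forces $\nu(\{|y|>1\})<\infty$, I would split the exponent in $\wh\mu_r(z)$ according to $|y|\le 1$, $1<y\le c_1$, $y<-1$, and the Gaussian/drift, and set
\[
\wh\mu_\ell(z)=\exp\Bigl\{iaz-\tfrac12 b^2z^2+\int_{1<y\le c_1}(e^{izy}-1)g(y)\,dy+\int_{-\infty}^{-1}(e^{izy}-1)g(y)\,dy\Bigr\}.
\]
Then $\wh\mu_r(z)=\wh\mu_0(z)\wh\mu_\ell(z)$, so $\mu_r=\mu_0\ast\mu_\ell$ with $\mu_\ell$ a genuine probability measure (a shifted Gaussian convolved with two compound Poissons, one on $(1,c_1]$ and one on $(-\infty,-1)$). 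Consequently $f_r(x)=\int f_0(x-y)\mu_\ell(dy)$, and multiplying by $e^{\gamma x}$ yields
\[
e^{\gamma x}f_r(x)=\int F_0(x-y)\,e^{\gamma y}\mu_\ell(dy),\qquad F_0(u):=e^{\gamma u}f_0(u).
\]

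Next I would verify that $e^{\gamma y}\mu_\ell(dy)$ is a finite measure by evaluating $\wh\mu_\ell(-i\gamma)$: the drift/Gaussian contributes $e^{a\gamma+b^2\gamma^2/2}$; the compound-Poisson piece on $(1,c_1]$ contributes a finite factor, because $g$ is bounded on $(1,c_1]$ and $e^{\gamma y}\le e^{\gamma c_1}$ there; and the piece on $(-\infty,-1)$ contributes a factor bounded in absolute value since $|e^{\gamma y}-1|\le 1$ and $\nu((-\infty,-1))<\infty$. The standing hypothesis $e^{\gamma x}f_0(x)\to 0$ gives $F_0(x-y)\to 0$ as $x\to\infty$ for every fixed $y$; provided $F_0$ is globally bounded, dominated convergence with envelope $\|F_0\|_\infty\,e^{\gamma y}$ (integrable against $\mu_\ell$) then gives $e^{\gamma x}f_r(x)\to 0$ and closes the proof.

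The main obstacle is the global boundedness of $F_0$, since the hypothesis controls only the right tail of $f_0$. Boundedness on $[x_0,\infty)$ is automatic from $F_0(u)\to 0$, and for $u<x_0$ one has $e^{\gamma u}\le e^{\gamma x_0}$, so the question reduces to local boundedness of $f_0$ near $-\infty$. I would circumvent this by a preliminary smoothing: when $b>0$, replace $f_0$ by $\wt f_0:=f_0\ast \phi_b(\cdot-a)$, which is bounded by $\|\phi_b\|_\infty$ and still satisfies $e^{\gamma x}\wt f_0(x)\to 0$ (by the same tilt-and-DCT trick applied to the Gaussian convolution alone), then absorb the Gaussian into $\wt f_0$ and run the argument against the compound-Poisson part of $\mu_\ell$ only. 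When $b=0$, the infinite activity $\nu(\{|y|\le 1\})=\infty$ of $\mu_0$ itself supplies enough smoothing to obtain an analogous bounded surrogate, or one may appeal directly to standard boundedness results for infinite-activity ID densities.
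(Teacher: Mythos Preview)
Your overall plan---factor $\mu_r$, tilt by $e^{\gamma y}$, and pass to the limit by dominated convergence---is exactly the paper's, and your decomposition $\mu_r=\mu_0\ast\mu_\ell$ is essentially the paper's $\mu_{r_2}\ast(\mu_{r_1}\ast\mu_{r_3})$ with the Gaussian/drift absorbed. The substantive difference is how you deal with the possibility that $F_0(u)=e^{\gamma u}f_0(u)$ is unbounded on the left, and here your $b=0$ case has a real gap.

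For $b>0$ your smoothing idea is sound, but the stated justification for $e^{\gamma x}\wt f_0(x)\to 0$ (``the same tilt-and-DCT trick'') is circular, since that trick used $\|F_0\|_\infty$ as dominator. It is easily repaired by reversing the roles: write $e^{\gamma x}\wt f_0(x)=\int F_0(u)\,e^{\gamma(x-u)}\phi_b(x-u-a)\,du$ and apply dominated convergence with $F_0(u)\,du$ as the finite measure (all exponential moments of $\mu_0$ exist since its L\'evy measure has compact support) and $v\mapsto e^{\gamma v}\phi_b(v-a)$ as the bounded integrand vanishing at $+\infty$.

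For $b=0$, however, your appeal to ``standard boundedness results for infinite-activity ID densities'' does not go through: infinite activity of an absolutely continuous L\'evy measure does \emph{not} by itself force the ID density to be bounded; the known sufficient criteria (Orey-type lower bounds on small-jump activity, integrability of $\wh\mu_0$, etc.) are genuine extra hypotheses not assumed in Theorem~\ref{theorem:ID}. And with $\mu_\ell$ now a pure compound Poisson, the reversed-roles trick above is also unavailable because of the atom at $0$. So neither branch of your proposed fix covers this case.

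The paper avoids the boundedness issue altogether by the symmetric split at $x/2$: after reducing (via Lemma~\ref{lem:cp:lower}) to the convolution of $f_0$ with the compound-Poisson density $f_{13}$, it writes the convolution as two integrals over $(-\infty,x/2]$ with the roles of the factors swapped, so that in each piece one factor is evaluated at an argument $\ge x/2\to\infty$. Then only the \emph{right-tail} decay $e^{\gamma u}f_0(u)\to 0$ and $e^{\gamma u}f_{13}(u)\to 0$ is needed (never a global bound), together with $\int e^{\gamma y}f_0(y)\,dy<\infty$ and $\int e^{\gamma y}f_{13}(y)\,dy<\infty$. This splitting is the missing device in your $b=0$ argument; inserting it there would close the gap without any boundedness claim for $f_0$.
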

\begin{proof}
 We study the case $a=b=0$ in $\mu_r$ and then generalize the result.
We decompose $\wh \mu_r(z)$ into 
\[
 \wh \mu_r(z)= \exp \Big\{
\Big(
\int_{-\infty}^{-1} +\int_{-1}^1 + \int_{1}^{c_1}\Big) (e^{izy}-1-izy {\bf 1}_{\{|y|\le 1\}})g(y)dy
\Big\}=: \wh \mu_{r_1}(z) \wh \mu_{r_2}(z)\wh \mu_{r_3}(z). 
\]
Consider the proper absolutely continuous part $f_{r_3}$ of $\mu_{r_3}$. Since $g$ is bounded on $[1,c_1]$, by  
exactly the same logic as for $f_2$ in the proof of Lemma \ref{lem:cp:lower}, we have $f_{r_3}(x)=o(e^{-\gamma x})$. 
Since $\mu_{r_1}$ is a compound Poisson with a non-positive support, again by the same reasoning as in the proof of Lemma \ref{lem:cp:lower}
(cf. \eqref{eq:convolution:fa1fa2}), 
the absolutely continuous part $f_{r_{13}}$ of $\mu_{r_1}\ast \mu_{r_3}$ satisfies $f_{r_{13}}(x)=o(e^{-\gamma x})$.

Now consider the convolution of $\mu_{r_2}(dx)=f_0(x)dx$ and 
\[
 \mu_{r_1}\ast \mu_{r_3}(dx) := e^{-c_{13}} \delta_0(dx) +(1-e^{-c_{13}})f_{13}(x)dx,
\]
which yields 
\begin{align*}
 e^{\gamma x}f_r(x) = e^{\gamma x} e^{-c_{13}} f_0(x) &+ (1-e^{-c_{13}}) \int_{-\infty}^{x/2} e^{\gamma(x-y)} f_0(x-y)e^{\gamma y}f_{13}(y)dy \\
&+(1-e^{-c_{13}}) \int_{-\infty}^{x/2} e^{\gamma(x-y)} f_{13}(x-y)e^{\gamma y}f_0(y)dy. 
\end{align*}
Recall that both $\mu_{r_2}$ and $\mu_{r_1}\ast \mu_{r_3}$ have $e^{\gamma x}$ moment \cite[Theorem 25.3]{sato:1999}, and so do $f_{13}$ and $f_0$. 
Moreover, both $e^{\gamma x}f_0(x)$ and $e^{\gamma x} f_{13}(x)$ converge to $0$ as $x\to\infty$. Thus by the dominated convergence we have 
$\lim_{x\to\infty} e^{\gamma x} f_r(x)=0$.

Finally let $f_G$ the density of Gaussian part plus the shift, and then 
\[
 e^{\gamma x} f_r\ast f_G (x) = \int_{-\infty}^{x/2}\big\{
e^{\gamma(x-y)}f_r(x-y) e^{\gamma y} f_G(y) +e^{\gamma(x-y)}f_G(x-y)e^{\gamma y}f_r(y)
\big\} dy <\infty 
\]
follows by exactly the same way as before.
\end{proof}

\begin{proof}[Proof of Theorem \ref{theorem:ID}]
{\bf $(\mathrm{i})$ implies $(\mathrm{ii})$ and $(\mathrm{iii})$ under the condition that $g$ is a.n.i.} \\
We decompose $\mu$ as in Lemma \ref{lem:factorization:id}: $\mu=\mu_1\ast \mu_2$. 
Then $(\mathrm{i})$ implies that $f_2\in \cals$ and $f(x)\sim f_2(x)$, so that $f_2$ is al.d. by Lemma \ref{lem:asymp:eqive:ald}. 
Notice that $f_2 \in \cals$ implies $\mu_2 \in \cals $ and $\mu_2$ has any $e^{-\gamma x}$ moment with $\gamma>0$ 
\cite[Theorem 25.3]{sato:1999}.
Now \cite[Corollary 3.1]{Watanabe:Yamamuro:2010} yields $\nu_1 \in \cals_{loc}$. Due to Lemma \ref{lem:sloc:s}, the a.n.i. property of 
$g$ implies $g_1\in \cals_+$. 
\\
\noindent
{\bf $(\mathrm{iii})$ implies $(\mathrm{ii})$ under the condition that $g$ is al.d.} \\
For $c_1>1$ define $g_u(x)=g(x){\bf 1}_{\{x\ge c_1\}}/\Lambda_u$ where $\Lambda_u=\ov G(c_1)$ and consider the compound Poisson $\mu_u$ with ch.f. 
\[
 \wh \mu_u(z)=\exp \Big\{
\Lambda_u \int_{c_1}^\infty (e^{izy}-1)g_u(y)dy
\Big\},
\] 
and the proper absolutely continuous part  
\[
 f_u(x)=(e^{\Lambda_u}-1)^{-1} \sum_{n=1}^\infty (\Lambda_u^n/n!) g_u^{\ast n}(x). 
\]
By Fubini the density $f$ of $\mu=\mu_r \ast \mu_u$ has an expression 
\[
 f(x)= e^{-\Lambda_u}f_r(x) + e^{-\Lambda_u} \sum_{n=1}^\infty (\Lambda_u^n/n!) g_u^{\ast n}\ast f_r(x),  
\]
where $f_r$ is the density of $\mu_r$. 
We write 
\begin{align}
\label{eq:inv:poisson}
 \frac{g_u^{\ast 2}\ast f_r(x)}{g_u(x)}= e^{\Lambda_u} \frac{2}{\Lambda_u^2} \frac{f(x)}{g_u(x)}-\frac{2}{\Lambda_u^2}\frac{f_r(x)}{g_u(x)}-
\frac{2}{\Lambda_u^2 g_u(x)} \sum_{n\neq 2}^\infty \frac{\Lambda_u^n}{n!}g_u^{\ast n}\ast f_r(x)
\end{align}
and observe the limit behavior when $x\to\infty$. 
Since $g_u(x)\sim g(x)/\Lambda_u$, due to the second condition of $(\mathrm{iii})$ together with $g(x)\sim g_1(x)\nu((1,\infty))$, 
\[
 \lim_{x\to\infty} e^{\Lambda_u} \frac{2}{\Lambda_2^2}\cdot \frac{f(x)}{g_u(x)}
= \lim_{x\to\infty} e^{\Lambda_u} \frac{2}{\Lambda_2^2}\cdot \frac{f(x)}{g_1(x)} \frac{g_1(x)}{g(x)} \frac{g(x)}{g_u(x)} 
=e^{\Lambda_u}\frac{2}{\Lambda_u}. 
\]
Since $f_r(x)=o(e^{-\gamma x})$ for some $\gamma>0$ by Lemma \ref{lem:pf:main:gamma:moment} and $g_u\in \call$, the second quantity vanishes. 
Moreover, by Lemma \ref{lem:a2-1} $(\mathrm{iii})$ 
\begin{align}
\label{liminf:gu:fr}
 \liminf_{x\to\infty} \frac{g_u^{\ast n}\ast f_r(x)}{g_u(x)} \ge \liminf_{x\to\infty} \frac{g_u^{\ast n}(x)}{g_u(x)} \liminf_{x\to\infty} 
\frac{g_u^{\ast n}\ast f_r(x)}{g_u^{\ast n}(x)} \ge n.   
\end{align}
Thus, Fatou's lemma yields   
\begin{align*}
 \liminf_{x\to\infty} \sum_{n\neq 2}^\infty \frac{\Lambda_u^n}{n!} \frac{g_u^{\ast n}\ast f_r(x)}{g_u(x)} 
\ge  \sum_{n\neq 2}^\infty \frac{\Lambda_u^n}{n!} \liminf_{x\to\infty} \frac{g_u^{\ast n}\ast f_r(x)}{g_u(x)}
\ge \Lambda_u(e^{\Lambda_u}-\Lambda_u).  
\end{align*}
Considering $\limsup$ in  
\eqref{eq:inv:poisson} with above results including \eqref{liminf:gu:fr} with $n=2$, we have  
\[
 \lim_{x\to\infty} \frac{g_u^{\ast 2}\ast f_r(x)}{g_u(x)}= 2. 
\]
Thus, \eqref{liminf:gu:fr} and Lemma \ref{lem:a2-1} $(\mathrm{iii})$ yield 
\begin{align*}
 2 &= \limsup_{x\to\infty} \frac{g_u^{\ast 2} \ast f_r(x)}{g_u(x)} \\
   &\ge \limsup_{x\to\infty} \frac{g_u^{\ast 2}(x)}{g_u(x)} \liminf_{x\to \infty} \frac{g_u^{\ast 2} \ast f_r(x)}{g_u^{\ast 2}(x)} \\
   &\ge \limsup_{x\to\infty} \frac{g_u^{\ast 2}(x)}{g_u(x)},
\end{align*}
so that $g_u^{\ast 2}(x)/g_u(x) \to 2$ as $x\to\infty$. 
This implies $g_u\in\cals_+$ and $g_1\in \cals_+$ holds by Lemma \ref{lem:a3-2}. \\

\noindent
{\bf $(\mathrm{ii})$ implies $(\mathrm{i})$ and $(\mathrm{iii})$ under the condition that $g$ is al.d.} \\ 
We make use of $g_u$ and $f_u$ in the part:{\bf $(\mathrm{iii})$ implies $(\mathrm{ii})$}. 
By definition $g \in \cals_+$ implies $g_u\in \cals_+$ and $g(x)\sim g_u(x) \Lambda_u$. 
Moreover, $g_u$ is bounded and al.d. 
Hence, by Theorem \ref{thm:compundpoi} 
$f_u\in\cals_+$ and $f_u$ is al.d., and indeed $\lim_{x\to\infty} f_u(x)/g_u(x)=\Lambda_u/(1-e^{-\Lambda_u})$.  
In view of the relation 
\[
f(x)=\wt f_u \ast f_r(x)=e^{-\Lambda_u}f_r(x)+(1-e^{-\Lambda_u})f_r\ast f_u(x), 
\] 
By Lemma \ref{lem:pf:main:gamma:moment}, 
$f_r(x)=o(e^{-\gamma x})$, and so $f_r(x)=o(f_u(x))$. Therefore,
Proposition \ref{prop:a3-3} applied to $f_r\ast f_u$, yields $f(x)\sim (1-e^{-\Lambda_u})f_u(x)$. 
Thus $(\mathrm{i})$ follows from Lemmas \ref{lem:a3-2} and \ref{lem:asymp:eqive:ald}.
For $(\mathrm{iii})$, we observe 
\[
 \lim_{x\to\infty}\frac{f(x)}{g_1(x)}= \lim_{x\to\infty} \frac{f(x)}{f_u(x)} \frac{f_u(x)}{g_u(x)} \frac{g_u(x)}{g_1(x)}=\nu((1,\infty)). 
\] 
\end{proof}

\subsection{Proof of Theorem \ref{theorem:ID:abs}}
We need the following lemma, which characterizes the tail of the density $f_r$ of $\mu_r$ in \eqref{def:mua}.  
\begin{lemma}
\label{lem:abs:int:chf}
 Suppose that $\int_{-\infty}^\infty |\wh \mu_+(z)|dz<\infty $ and then for sufficiently large $c_1$ of \eqref{def:mua}
the density $f_r$ of $\mu_r$ satisfies $\lim_{x\to\infty}e^{\gamma x}f_r(x)=0$ for any $\gamma>0$. 
\end{lemma}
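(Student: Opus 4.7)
The plan is to establish this by an exponential tilting argument. Since the L\'evy measure of $\mu_r$ is supported in $(-\infty,c_1]$, the moment condition $\int_{|y|>1}e^{\gamma y}g(y)\mathbf{1}_{\{y\le c_1\}}dy<\infty$ holds, so by \cite[Theorem 25.3]{sato:1999} the Laplace transform $L(\gamma):=\int_{\R}e^{\gamma x}\mu_r(dx)$ is finite for every $\gamma>0$. I would introduce the tilted probability measure $\mu_{r,\gamma}(dx):=e^{\gamma x}\mu_r(dx)/L(\gamma)$, which is again infinitely divisible with L\'evy density $e^{\gamma y}g(y)\mathbf{1}_{\{y\le c_1\}}$, the same Gaussian variance $b^2$, and a modified drift. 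Its characteristic function satisfies $\wh \mu_{r,\gamma}(z)=\wh \mu_r(z-i\gamma)/L(\gamma)$, and therefore
\[
|\wh \mu_{r,\gamma}(z)|=\exp\Bigl\{-\int_{-\infty}^{c_1}(1-\cos(zy))e^{\gamma y}g(y)dy-\tfrac{1}{2}b^2z^2\Bigr\}.
\]

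The pivotal step is to compare $|\wh \mu_{r,\gamma}|$ with $|\wh \mu_+|$. Since $e^{\gamma y}\ge 1$ for $y\ge 0$, one has $\int_0^{c_1}(1-\cos(zy))g(y)dy\le \int_{-\infty}^{c_1}(1-\cos(zy))e^{\gamma y}g(y)dy$, while the discarded tail satisfies $\int_{c_1}^{\infty}(1-\cos(zy))g(y)dy\le 2\nu((c_1,\infty))<\infty$ uniformly in $z$. Combining these will give the pointwise bound
\[
|\wh \mu_{r,\gamma}(z)|\le e^{2\nu((c_1,\infty))}|\wh \mu_+(z)|,
\]
so the hypothesis $\wh \mu_+\in L^1(\R)$ transfers integrability to $\wh \mu_{r,\gamma}$. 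In particular, specializing to $\gamma=0$ also shows $\wh \mu_r\in L^1(\R)$, which secures the existence of $f_r$ as a bounded continuous density and thereby justifies the notation used in the lemma.

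Fourier inversion then yields a bounded continuous density $f_{r,\gamma}(x)=(2\pi)^{-1}\int e^{-izx}\wh \mu_{r,\gamma}(z)dz$, and the standard estimate $|f_{r,\gamma}(x+h)-f_{r,\gamma}(x)|\le (2\pi)^{-1}\int|e^{-izh}-1||\wh \mu_{r,\gamma}(z)|dz$ combined with dominated convergence will make $f_{r,\gamma}$ uniformly continuous on $\R$. Being also a probability density it lies in $L^1(\R)$, and any uniformly continuous $L^1$ function on $\R$ vanishes at infinity, whence $f_{r,\gamma}(x)\to 0$ as $x\to\infty$. Since by construction $f_{r,\gamma}(x)=e^{\gamma x}f_r(x)/L(\gamma)$, this would deliver $e^{\gamma x}f_r(x)\to 0$, as claimed.

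The main obstacle will be the integrability bound of the second paragraph: one must simultaneously exploit two features of the truncation at $c_1$, namely that the boundedness of the positive jumps keeps $e^{\gamma y}$ harmless on $[0,c_1]$ (allowing pointwise domination of the phase integral) and that the discarded positive tail $(c_1,\infty)$ carries finite L\'evy mass (contributing at most a uniform multiplicative constant through $1-\cos\le 2$). Once this comparison is in hand, everything else is a routine application of Fourier inversion together with the classical vanishing-at-infinity principle for uniformly continuous integrable functions.
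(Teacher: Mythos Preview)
Your argument is correct and in fact slightly cleaner than the paper's. Both proofs rest on the same idea: exponentially tilt, show the tilted characteristic function is integrable by comparison with $|\wh\mu_+|$, and conclude via Riemann--Lebesgue (equivalently, your uniformly-continuous-$L^1$ argument). The difference is organizational. The paper first splits $\mu_r=\mu_{r+}\ast\mu_-$ into its spectrally positive and spectrally negative parts, proves $\int|\wh\mu_{r+}|<\infty\Leftrightarrow\int|\wh\mu_+|<\infty$ by citing \cite[Lemma~10(i)]{Watanabe:2020}, tilts only $\mu_{r+}$ (where the comparison $|\wh\mu_{r+}^\gamma|\le|\wh\mu_{r+}|$ is immediate since all jumps are positive), and finally reassembles via convolution with $\mu_-$ using dominated convergence. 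You instead tilt the whole of $\mu_r$ at once and compare directly with $|\wh\mu_+|$: the negative jumps contribute harmlessly because $e^{\gamma y}(1-\cos zy)\ge 0$, and the missing tail $(c_1,\infty)$ costs only the uniform constant $e^{2\nu((c_1,\infty))}$. Your route avoids the external reference and the final convolution step; the paper's route has the small advantage that the tilting comparison $|\wh\mu_{r+}^\gamma|\le|\wh\mu_{r+}|$ needs no extra constant. Either way, note that your bound works for every $c_1>1$, so the qualifier ``sufficiently large'' in the statement is not actually needed.
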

\begin{proof}
We prepare the spectrally positive version $\mu_{r+}$ of $\mu_r$ by 
\[
\wh \mu_{r+}(z) = \exp \Big\{
\int_{0}^{c_1} (e^{izy}-1-izy {\bf 1}_{\{0<y\le 1\}})g(y)dy
\Big\}.
\]
First we see $\int_{-\infty}^\infty |\wh \mu_+(z)|dz<\infty \Leftrightarrow \int_{-\infty}^\infty |\wh \mu_{r+}(z)|dz<\infty$, but 
the proof is only a reproduction of that for Lemma 10 $(\mathrm{i})$ of \cite{Watanabe:2020} and we omit it. 
We show that $\int_{-\infty}^\infty |\wh \mu_{r+}(z)|dz<\infty$ implies 
that its density satisfies 
$f_{r+}(x)=o(e^{-\gamma x})$. Although this part is again quite similar to that of 
Lemma 10 $(\mathrm{ii})$ of \cite{Watanabe:2020}, since we treat a spectrally positive case, we briefly state the outline. 
Because $\int_{1}^{c_1} e^{\gamma x}g(x)dx <\infty$, by \cite[Theorem 25.3]{sato:1999}, we obtain 
$C_r:=\int_{-\infty}^\infty e^{\gamma x}f_{r+}(x)dx<\infty$. Thus, we may define the exponential tilt $\mu_{r+}^\gamma$ on $\R$ as 
$\mu_{r+}^\gamma(dx)=C_r^{-1}e^{\gamma x} f_{r+}(x)dx $. Then due to \cite[Theorem 3.9]{Kuprianou:2006} (cf. \cite[Ex. 33.15]{sato:1999} 
and \cite[Lemma 7]{Watanabe:2020}), $\mu_{r+}^\gamma$ still belongs to \idr\ 
given by \eqref{def:chf:idr} with the L\'evy-Khintchine triplet 
\[
 a= \int_{0}^1(e^{\gamma x}-1)x g(x)dx,\quad b=0 \quad \text{and}\quad \nu(dx)= {\bf 1}_{\{0 < x \le c_1\}}e^{\gamma x} g(x)dx. 
\]
Now observe that 
\[
 |\wh \mu_{r+}^\gamma(z)|=|\wh \mu_{r+}(z)|\exp\Big\{ 
\int_{0}^{c_1}(\cos (zx)-1)(e^{\gamma x}-1)\nu(dx) \Big\}\le |\wh \mu_{r+}(z)|. 
\]
Hence $\wh \mu_{r+}^\gamma$ is absolutely integrable, and the Riemann-Lebesgue lemma implies $f_{r+}(x)=o(e^{-\gamma x})$.
Finally, since $\mu_r=\mu_{r+}\ast \mu_-$ with $\wh \mu_-(z):=\wh \mu(z)/\wh \mu_+(z)$, and $\mu_{r+}$ has a bounded continuous density 
\[
 e^{\gamma x} f_r(x)= \int_{-\infty}^\infty e^{\gamma (x-y)} f_{r+}(x-y) e^{\gamma y} \mu_-(dy) \le c \int_{-\infty}^\infty 
e^{\gamma y}
\mu_-(dx)<\infty. 
\]
Thus we have $f_r(x)=o(e^{-\gamma x})$. 
\end{proof}
\begin{proof}[Proof of Theorem \ref{theorem:ID:abs}]
Notice that the conditions of Theorem \ref{theorem:ID:abs} includes the conditions other than $\nu(\R)=\infty$ and \eqref{exp:limit:trancated}
of Theorem \ref{theorem:ID}, where the former condition is used only for absolutely continuously of $\mu$. 
In view of the proof of Lemma \ref{lem:pf:main:gamma:moment}, the condition \eqref{exp:limit:trancated} is used only for 
deriving $f_r(x)=o(e^{-\gamma x})$. Thus Lemma \ref{lem:pf:main:gamma:moment} holds under the conditions of Theorem 
\ref{theorem:ID:abs} by Lemma \ref{lem:abs:int:chf}. 
Moreover, in the proof of Theorem \ref{theorem:ID}, \eqref{exp:limit:trancated} appears implicitly only 
through the fact: $f_r(x)=o(e^{-\gamma x})$. Thus, we could reuse the proof of Theorem \ref{theorem:ID} for that of Theorem \ref{theorem:ID:abs}. 
\end{proof}

\appendix

\section{Proofs for remaining results in Sections \ref{sec:main:results} and \ref{sec:cp}}
\label{append:proofs}
The proofs for the versions of the convolution root are given in this section. 
Although these are not directly related for the main results, 
they have their own interest. 
Throughout this section let $c$ be a positive constant whose value may differ depending on context. 

\begin{proof}[Proof of {\rm Theorem \ref{prop:a3-5}}]
{\bf The case $\wt f^{\ast k}$ are a.n.i.}  We start by deriving 
\begin{align}
 \liminf_{x\to\infty} \wt f^{\ast n}(x)/\wt f(x) \ge n.  \label{pf:convolution:root:liminf}
\end{align}
Let $z<0$. We take a uniform constant $x_0>0$ such that for $x\ge x_0$, $\wt f^{\ast k}(x),\,1\le k \le n-1$ 
satisfy the condition \eqref{eq:def:ani}. 
Then for $x>x_0$ sufficiently large, we have  
\begin{align*}
 \wt f^{\ast n}(x+(n-1)z) 
&= \int_{(n-1)z/2}^\infty \wt f^{\ast(n-1)}(x/2+(n-1)z-y) \wt f(y+x/2)dy \\
&\qquad + \int_{(n-1)z/2}^\infty \wt f(x/2+(n-1)z-y) \wt f^{\ast(n-1)}(y+x/2)dy\\
&\ge \int_{x_0}^{x/2} \wt f^{\ast(n-1)}(x/2+(n-1)z-y) \wt f(y+x/2)dy \\
&\qquad + \int_{x_0}^{x/2+(n-2)z} \wt f(x/2+(n-1)z-y) \wt f^{\ast(n-1)}(y+x/2)dy\\
& \ge \wt f(x) (1-\vep_1^x)\int_{(n-1)z}^{x/2+(n-1)z-x_0} \wt f^{\ast(n-1)}(y)dy \\
&\qquad + \wt f^{\ast(n-1)}(x+(n-2)z) (1-\delta_{n-1}^x) \int_z^{x/2+(n-1)z-x_0}\wt f(y)dy \\
&=: I_{n-1}^x(z)\wt f(x)+J_{n-1}^x(z)\wt f^{\ast(n-1)}(x+(n-2)z), 
\end{align*}
where $\vep_1^x,\delta_{n-1}^x \in(0,1)$ are small constants such that $\vep_1^x,\delta_{n-1}^x\to 0$ as $x\to \infty$.
We further take small constants 
$\vep_k^x,\delta_k^x \in(0,1),\,k=2,\ldots,n-2$ such that $\vep_k^x,\delta_k^x\to 0$ as $x\to \infty$, 
and we successively apply the inequality and reach 
\begin{align}
\label{ineq:reccursion}
 \wt f^{\ast n}(x+(n-1)z) \ge \wt f(x) \sum_{k=1}^n I_{n-k}^x (z)\prod_{\ell=1}^{k-1} J_{n-\ell}^x (z), 
\end{align}
where $I_0^x(z):=1$ and $\prod_{\ell=1}^0 J_{n-\ell}^x(z):=1$. Since all $I_k^x(z),\,J_k^x(z),\,1\le k\le n-1$ satisfy 
\begin{align*}
 \lim_{z\to -\infty}\lim_{x\to \infty} I_k^x(z)= \lim_{z\to -\infty} \lim_{x\to \infty} J_k^x(z)=1, 
\end{align*}
by $\wt f^{\ast n} \in \call$ we have 
\begin{align*}
 \liminf_{x\to \infty} \frac{\wt f^{\ast n}(x)}{\wt f(x)} = \lim_{z\to-\infty} \liminf_{x\to\infty} 
\frac{\wt f^{\ast n}(x+(n-1)z)}{ \wt f(x)} \ge \lim_{z\to-\infty} \liminf_{x\to\infty} \sum_{k=1}^n I_{n-k}^x (z)\prod_{\ell=1}^{k-1} J_{n-\ell}^x (z)=n . 
\end{align*} 

Next we prove the other direction, 
\begin{align}
 \limsup_{x\to\infty} \wt f^{\ast n}(x) / \wt f(x) \le n. \label{pf:convolution:root:limisup}
\end{align}
This time we take $z>0$ and observe that 
\begin{align*}
  \wt f^{\ast n}(x+(n-1)z) 
&= \int_{-\infty}^z \wt f^{\ast(n-1)}(x+(n-1)z-y) \wt f(y)dy \\
&\qquad + \Big(\int_{-\infty}^{(n-1)z} + \underbrace{\int_{(n-1)z}^{x+(n-2)z} 
\Big)\wt f(x+(n-1)z-y) \wt f^{\ast(n-1)}(y)dy}_{=:\bar J_{n-1}^x (z)}\\
& \le \wt f^{\ast (n-1)}(x+(n-2)z)(1+\bar \vep_{n-1}^x) \int_{-\infty}^z \wt f(y)dy \\
&\qquad +\wt f(x) (1+\bar \vep_1^x) \int_{-\infty}^{(n-1)z}\wt f^{\ast (n-1)}(y)dy +\bar J_{n-1}^x (z) \\
&=: \wt f^{\ast (n-1)}(x+(n-2)z) \bar I_1^x(z)+ \wt f(x) \bar I_{n-1}^x (z) + \bar J_{n-1}^x (z), 
\end{align*}
where $\bar \vep_k^x\in(0,1),\,k=1,\ldots,n-1$ are small constants such that $\bar \vep_k^x \to 0$ as $x\to \infty$. 
We successively apply the inequality above and obtain 
\begin{align}
\label{ineq:reccursion2} 
 \wt f^{\ast n}(x+(n-1)z) &\le \Big\{
\sum_{k=1}^{n-2}\bar I_{n-k}^x(z)\big(\bar I_1^x(z)\big)^{k-1}+2\big(
\bar I_1^x(z)\big)^{n-1} 
\Big\} \wt f(x) \\
&\quad + \sum_{k=1}^{n-1} \bar J_{n-k}^x(z) \big(\bar I_1^x(z)\big)^{k-1}, \nonumber 
\end{align}
where $\big(\bar I_1^x(z)\big)^0:=1$. 
We introduce a non-decreasing function $0<\alpha(x)<x/2$ such that 
$\wt f^{\ast n}$ is $\alpha$-insensitive (Lemma \ref{lem:a2-1} $(\mathrm{i})$) and put $(n-1)z=\alpha(x)$, so that 
$z=\alpha'(x):=\alpha(x)/(n-1)$. Notice that $\alpha'$ is again an insensitive function for $\wt f^{\ast n}$ (cf. \cite[p.20]{Foss:Korshunov:Zachary:2013}). 
Obviously 
\[
\lim_{x\to\infty} \bar I_k^x(\alpha'(x)) = \lim_{x\to\infty}(1+\bar \vep_{n-k}^x) 
\int_{-\infty}^{k\alpha'(x)} \wt f^{\ast k}(y)dy =1. 
\]
We show that 
\begin{align}
\label{def:Jkx:smaller:fn}
 \bar J_k^x (\alpha'(x))=o(\wt f^{\ast n}(x))\quad \text{as}\quad x\to\infty. 
\end{align}
First we see that 
\begin{align}
\label{limsup:fk:fn}
 \limsup_{x\to\infty} \wt f^{\ast k}(x)/\wt f^{\ast n}(x) \le 1\quad \text{for}\quad 1\le k\le n-1. 
\end{align}
For $v>0$, we write 
\begin{align*}
 \wt f^{\ast n}(x-v) 
&\ge \int_{-v}^v \wt f^{\ast k}(x-v-y)\wt f^{\ast (n-k)}(y)dy \ge \inf_{z\in [x-2v,x]}\wt f^{\ast k}(z) \int_{-v}^v \wt f^{\ast(n-k)}(y)dy. 
\end{align*}
Since $\wt f^{\ast k}$ is a.n.i., it follows that 
\begin{align*}
 \liminf_{x\to\infty}\wt f^{\ast n}(x)/\wt f^{\ast k}(x) = \lim_{v\to\infty} \liminf_{x\to\infty} \frac{\wt f^{\ast n}(x)}{\wt f^{\ast n}(x-v)} 
\frac{\wt f^{\ast n}(x-v)}{\wt f^{\ast k}(x)} \ge \lim_{v\to \infty}\int_{-v}^v \wt f^{\ast(n-k)}(y)dy =1. 
\end{align*}
We return to $\bar J_k^x(z)$ and observe that for sufficiently large $z>0$ 
\begin{align*}
 \bar J_k^x(z) &= \int_z^x \wt f(x+z-y)\wt f^{\ast k}(y+(k-1)z)dy \le c \int_{z/2}^{x-z/2} \wt f^{\ast n}(x-y)\wt f^{\ast n}(y)dy, 
\end{align*}
where we use \eqref{limsup:fk:fn} and the a.n.i. property of $\wt f^{\ast n}$. 
Then recalling that $\alpha'$ is an insensitive function of $f^{\ast n}$ and so is $\alpha'/2$, we have by 
Lemma \ref{lem:a2-1} $(\mathrm{v})$:\eqref{condi:subexponential:real}, that 
\begin{align*}
 \bar J_k^x(\alpha'(x)) &\le c \int_{\alpha'(x)/2}^{x-\alpha'(x)/2} \wt f^{\ast n}(x-y) \wt f^{\ast n}(y)dy = o(\wt f^{\ast n}(x)). 
\end{align*}
Now in view of \eqref{ineq:reccursion2} 
putting $z=\alpha'(x)$, we have 
\begin{align*}
1 &\le  \liminf_{x\to\infty} \frac{\wt f(x)}{\wt f^{\ast n}(x+\alpha(x))}\Big[
\sum_{k=1}^{n-2}\bar I_{n-k}^x(\alpha'(x)) \big\{\bar I_1^x(\alpha'(x))\big\}^{k-1}+2
\big\{\bar I_1^x(\alpha'(x)) \big\}^{n-1}
\Big] \\
&\quad + \limsup_{x\to\infty} \frac{\sum_{k=1}^{n-1} \bar J_{n-k}^x(\alpha'(x)) \big\{\bar I_1^x(\alpha'(x))\big\}^{k-1}}{f^{\ast n}(x+\alpha(x))} \\
 & \le  \liminf_{x\to\infty} \frac{\wt f(x)}{\wt f^{\ast n}(x)} \lim_{x\to\infty}
\Big[
\sum_{k=1}^{n-2}\bar I_{n-k}^x(\alpha'(x)) \big\{\bar I_1^x(\alpha'(x))\big\}^{k-1}+2
\big\{\bar I_1^x(\alpha'(x)) \big\}^{n-1}
\Big] 
+ o(1) \\ 
 & = n \cdot \liminf_{x\to\infty} \frac{\wt f(x)}{\wt f^{\ast n}(x)}, 
\end{align*}
where we use \eqref{def:Jkx:smaller:fn} in the second step. Thus we obtain \eqref{pf:convolution:root:limisup}. 
Finally we apply Lemma \ref{lem:a3-2} to the fact that $\lim_{x\to\infty} \wt f^{\ast n}(x)/\wt f(x)=n$ and obtain the result. 

\noindent
{\bf The case $f\in\call$.} Since $\wt f^{\ast k}\in \call$ for $k=1,\ldots, n$ (cf. Lemma \ref{lem:a2-1} $(\mathrm{iv})$), we may take a single 
non-decreasing function $0<\alpha(x) < x/2$ such that $\wt f^{\ast k}$ is $\alpha$-insensitive (Lemma \ref{lem:a2-1} $(\mathrm{ii})$).
We decompose the integral form for $\wt f^{\ast n}=\wt f^{\ast (n-1)}\ast \wt f$ and write for $x>0$
\begin{align*}
1 &= \Big(
\int_{-\infty}^{-\alpha(x)}+\int_{-\alpha(x)}^{\alpha(x)}+\int_{\alpha(x)}^{x-\alpha(x)} +\int_{x-\alpha(x)}^{x+\alpha(x)}+\int_{x+\alpha(x)}^\infty 
\Big) \frac{\wt f^{\ast (n-1)}(x-y)\wt f(y)}{\wt f^{\ast n}(x)}dy \\
&=:I_1(x)+\cdots + I_5(x). 
\end{align*} 
We start with $I_2(x)$. Since $\wt f^{\ast (n-1)} \in \call$ and $\wt f^{\ast n}\in \cals$, we have for $|y|\le \alpha(x)$ 
\begin{align}
 \limsup_{x\to\infty} \frac{\wt f^{\ast (n-1)}(x-y)}{\wt f^{\ast n}(x)}=
 \limsup_{x\to\infty} \frac{\wt f^{\ast (n-1)}(x)}{\wt f^{\ast n}(x)}= \limsup_{x \to \infty}  
\frac{\wt f^{\ast (n-1)}(x)}{\wt f^{\ast n(n-1)}(x)}
\frac{\wt f^{\ast n(n-1)}(x)}{\wt f^{\ast n}(x)} \le 1-n^{-1}. \label{thm:convo:root}
\end{align}
Recalling the form $\wt f(x)= p\delta(x)+q f(x)$, we have by the property of $\delta$ that 
\begin{align*}
 \limsup_{x\to\infty} I_2(x) 
&\le p \limsup_{x\to\infty} \frac{\wt f^{\ast(n-1)}(x)}{\wt f^{\ast n}(x)} + q \limsup_{x\to\infty}
\int_{-\alpha(x)}^{\alpha(x)}  \frac{\wt f^{\ast (n-1)}(x-y)}{\wt f^{\ast n}(x)}
f(y)dy \\
& \le p(1-n^{-1})+ q (1-n^{-1}) \int_{-\infty}^\infty f(y)dy =1-n^{-1},
\end{align*}
where in the second term, we use Fatou's lemma, which is possible since the delta function is not involved. 
It follows from \eqref{thm:convo:root} and a.n.i. property of $\wt f^{\ast n}$ that 
\[
 I_1(x)= \int_{-\infty}^{-\alpha(x)} \frac{\wt f^{\ast(n-1)}(x-y)}{\wt f^{\ast n}(x-y)} \frac{\wt f^{\ast n}(x-y)}{\wt f^{\ast n}(x)} \wt f(y) dy
\le c \int_{-\infty}^{-\alpha(x)} \wt f(y)dy \to 0 
\]
as $x\to \infty$. 
For $I_3(x)$, noticing an expression 
\[
 I_3(x)= \int_{\alpha(x)}^{x-\alpha(x)} \frac{\wt f^{\ast (n-1)}(x-y)}{\wt f^{\ast n}(x-y)} \frac{\wt f(y)}{\wt f^{\ast n}(y)} 
\frac{\wt f^{\ast n}(x-y)}{\wt f^{\ast n}(x)}{\wt f^{\ast n}(y)}dy,  
\]
we apply \eqref{thm:convo:root} and Lemma \ref{lem:a2-1} $(\mathrm{iii})$ respectively to the first and the second 
terms of the integrand. Then Lemma \ref{lem:a2-1} $(\mathrm{v})$ yields $\lim_{x\to\infty} I_3(x)=0$.
By the dominated convergence 
\[
\liminf_{x\to \infty} I_4(x)=
\liminf_{x\to \infty}\frac{\wt f(x)}{\wt f^{\ast n}(x)} 
\lim_{x\to\infty}\int_{-\alpha(x)}^{\alpha(x)} \frac{\wt f(x-y)}{\wt f(x)} \wt f^{\ast (n-1)}(y)dy
= \liminf_{x\to \infty}\frac{\wt f(x)}{\wt f^{\ast n}(x)}.  
\]
Here to apply the dominated convergence avoiding $\delta$, if necessary, write 
\[
 \wt f^{\ast (n-1)}(y)= (p\delta+q f)^{\ast (n-1)}(y) =\sum_{k=1}^{n-1}{}_{n-1}C_{k} f^{\ast k}(y)q^kp^{n-1-k}+p^{n-1}\delta(y)
\]
and take a similar approach as for $I_2(x)$. 

Finally we apply Lemma \ref{lem:a2-1} $(\mathrm{iii})$ and the a.n.i. property of $\wt f^{\ast n}$ to $I_5$, and obtain 
\begin{align*}
 I_5(x) &= \int_{-\infty}^{-\alpha(x)} \frac{\wt f(x-y)}{\wt f^{\ast n}(x-y)} \frac{\wt f^{\ast n}(x-y)}{\wt f^{\ast n}(x)} \wt f^{\ast (n-1)}(y)dy.  \\
& \le n^{-1} \int_{-\infty}^{-\alpha(x)} \wt f^{\ast (n-1)}(y)dy \to 0\quad \text{as}\ x\to\infty. 
\end{align*}
Now correcting above bounds, we reach 
\begin{align*}
 1=\liminf_{x\to\infty}\big(
\sum_{i=1}^5 I_i(x)
\big) &\le \liminf_{x\to\infty} I_4(x)+\limsup_{x\to\infty} \sum_{i\neq 4}I_i(x) =\liminf_{x\to\infty} \wt f(x)/\wt f^{\ast n}(x)+1-n^{-1},
\end{align*}
which is equal to \eqref{pf:convolution:root:limisup}. 
By Lemma \ref{lem:a2-1} $(\mathrm{iii})$, we obtain 
\[
 \lim_{x\to\infty} \wt f^{\ast n}(x)/\wt f(x) = n. 
\] 
\end{proof}

\begin{proof}[Proof of Proposition \ref{prop:convroot:gene}]
Observe that 
\begin{align}
\label{eq:pf:convroot}
 \wt f^{\ast n}(x)=\sum_{k=0}^n \binom{n}{k}(1-p)^k p^{n-k} f^{\ast k}(x),
\end{align} 
where $f^{\ast 0}(x)=\delta(x)$ and define 
\begin{align*}
 \liminf_{x\to\infty} f(x)/\wt f^{\ast n}(x)= \underline{C}\quad \text{and}\quad \limsup_{x\to\infty} f(x)/\wt f^{\ast n}(x) = \ov{C},
\end{align*}
which are well-defined since $\wt f^{\ast n}(x) \ge n(1-p)p^{n-1}f(x)$. We show by induction that 
\begin{align}
\label{pr:df:liminfsup}
 \liminf_{x\to\infty} f^{\ast k}(x)/\wt f^{\ast n}(x) \ge k\underline{C}\quad \text{and}\quad 
 \limsup_{x\to\infty} f^{\ast k}(x)/\wt f^{\ast n}(x) \textcolor{red}{\le} k\ov{C}
\end{align}
hold. Since the proof for the $\limsup$ part is similar, we only consider the $\liminf$ part. Suppose that \eqref{pr:df:liminfsup} holds 
with $k-1,\,k\ge 2$ and consider 
\begin{align*}
 \frac{f^{\ast k}(x)}{\wt f^{\ast n}(x)} &= \Big(
\int_{-\alpha(x)}^{\alpha(x)} + \int_{x-\alpha(x)}^{x+\alpha(x)}+ \int_{\alpha(x)}^{x-\alpha(x)}+
\int_{-\infty}^{-\alpha(x)} + \int_{x+\alpha(x)}^\infty
\Big) \frac{f(x-y) f^{\ast (k-1)}(y)}{\wt f^{\ast n}(x)}dy \\
&=: I_1(x)+I_2(x)+I_3(x)+I_4(x)+I_5(x),
\end{align*}
where $\alpha$ is an insensitive function for $\wt f^{\ast n}$. By Fatou's lemma and $\wt f^{\ast n}\in \call$,  
\begin{align}
\label{ineq1:pf:I1}
 \liminf_{x\to \infty} I_1(x) &=\liminf_{x\to\infty} \int_{-\alpha(x)}^{\alpha(x)}
\frac{f(x-y)}{\wt f^{\ast n}(x-y)} \frac{\wt f^{\ast n}(x-y)}{\wt f^{\ast n}(x)} f^{\ast (k-1)}(y)dy  \\
& \ge \int_{-\infty}^{\infty} \liminf_{x\to\infty} \frac{f(x-y)}{\wt f^{\ast n}(x-y)} {\bf 1}_{\{y\in [-\alpha(x),\alpha(x)]\}} f^{\ast (k-1)}(y) dy \ge 
\underline{C}  \nonumber
\end{align}
and by the induction hypothesis
\begin{align}
\label{ineq1:pf:I2}
  \liminf_{x\to \infty} I_2(x) &=\liminf_{x\to\infty} \int_{-\alpha(x)}^{\alpha(x)}
\frac{f^{\ast (k-1)}(x-y)}{\wt f^{\ast n}(x-y)} \frac{\wt f^{\ast n}(x-y)}{\wt f^{\ast n}(x)} f(y)dy  \\
& \ge \int_{-\infty}^{\infty} \liminf_{x\to\infty} \frac{f^{\ast (k-1)}(x-y)}{\wt f^{\ast n}(x-y)} {\bf 1}_{\{y\in [-\alpha(x),\alpha(x)]\}} f(y) dy \ge 
(k-1)\underline{C}.  \nonumber
\end{align}
Moreover, since $\wt f^{\ast n}(x) \ge n(1-p)p^{n-1}f(x)+\binom{n}{k-1}(1-p)^{k-1}p^{n-k+1}f^{\ast (k-1)}(x)$ for all $x\in \R$,  
\begin{align}
\label{ineq1:pf:I3}
 \limsup_{x\to \infty}I_3(x) \le c \limsup_{x\to\infty} \int_{\alpha(x)}^{x-\alpha(x)}
 \frac{\wt f^{\ast n}(x-y) \wt f^{\ast n}(y)}{\wt f^{\ast n}(x)}dy =0 
\end{align}
by Lemma \ref{lem:a2-1} $(\mathrm{v})$, while by exactly the same logic, 
\begin{align}
\label{ineq1:pf:I45}
 \limsup_{x\to\infty} (I_4(x)+I_5(x)) \le c \limsup_{x\to\infty}\int^{-\alpha(x)}_{-\infty} 
\frac{\wt f^{\ast n}(x-y) \wt f^{\ast n}(y)}{\wt f^{\ast n}(x)}dy =0.  
\end{align}
Now collecting \eqref{ineq1:pf:I1}-\eqref{ineq1:pf:I45} we obtain \eqref{pr:df:liminfsup}. 
Then recalling \eqref{eq:pf:convroot} we observe that 
\begin{align*}
 1 &\ge \limsup_{x\to\infty} n (1-p) p^{n-1} f(x)/\wt f^{\ast n}(x)\\
& \quad + \liminf_{x\to \infty}
\sum_{k= 2}^n \binom{n}{k}(1-p)^k p^{n-k} f^{\ast k}(x)/\wt f^{\ast n}(x) \\
&\ge  n (1-p) p^{n-1} \ov C+ \sum_{k=2}^n \binom{n}{k} (1-p)^k p^{n-k} k \underline{C} \\
& = n(1-p)\big\{p^{n-1} \ov C+ (1-p^{n-1})\underline{C}\big\}
\end{align*}
and 
\begin{align*}
 1 &\le \liminf_{x\to\infty} n(1-p)p^{n-1} f(x)/\wt f^{\ast n}(x) \\
&+ \limsup_{x\to \infty}\sum_{k=2}^n 
\binom{n}{k}(1-p)^k p^{n-k} f^{\ast k}(x)/\wt f^{\ast n}(x) \\
& \le n(1-p)\big\{ p^{n-1} \underline{C}+ (1-p^{n-1})\ov C\big\}, 
\end{align*}
which together yield
\[
 0\le n(1-p) (1-2p^{n-1})(\ov C-\underline{C}).
\] 
From the condition $2^{-1/(n-1)}< p$, $\ov C =\underline{C}$ should hold. 
Then, since $f(x)\le c \wt f^{\ast n}(x)$ for all $x \in \R$, $\wt f\in \cals$ follows from Lemma \ref{lem:a3-2}. 
\end{proof}

\begin{proof}[Proof of Corollary \ref{cor:steutel}]
Take $\alpha=n^{-1},n\in \N$, so that $(\wt f^{\ast \alpha})^{\ast n}=\wt f$ and the Poisson parameter of 
$\wt f^{\ast \alpha}$ is $\lambda/n$. The coefficient of $\delta$ of $\wt f^{\ast \alpha}$ satisfies 
$e^{-\lambda/n} > 2^{-1/n}>2^{-1/(n-1)}$. Thus by Proposition \ref{prop:convroot:gene}, $\wt f^{\ast \alpha}\in \cals$ and 
$\lim_{x\to\infty} \wt f^{\ast \alpha}(x)/f(x)=\alpha$. This implies that the result holds for any rational $\alpha>0$.
\end{proof}

\noindent {\bf Acknowledgments}
The earlier version of results in this paper has been presented 
at the annual workshop “Infinitely divisible processes and related topics” held in Nov. 2021. 
The results have been significantly improved after the workshop and the author acknowledges the comments and  
the hosts in the workshop. The author is grateful to Toshiro Watanabe for careful reading and 
all comments and discussions about subexponentiality. 
Particularly, his suggestion of the relation between the local subexponentiality and the topic yielded 
substantial improvement of the main theorem.  
The author's research is partly supported by the JSPS Grant-in-Aid for Scientific Research C
(19K11868).

\end{document}